\documentclass[a4paper, 11pt]{amsart}

\usepackage{amsmath, amssymb, amsthm, color, mathtools, stmaryrd, enumerate, hyperref, appendix, mathrsfs, bm, tikz, tikz-cd}
\usetikzlibrary{decorations.pathmorphing, decorations.markings}
\tikzset{blob/.style={circle, draw=black, fill=black, inner sep=0, minimum size=\blobsize}, snakeit/.style={decorate, decoration={snake, amplitude=0.25mm, segment length=4mm, post length=0.5mm}}}
\let\oldlrcorner=\lrcorner
\usepackage{mathabx}
\usepackage[capitalize]{cleveref}
\crefname{appsec}{Appendix}{Appendices}
\usepackage[justification=centering]{caption}

\let\originalleft\left
\let\originalright\right
\renewcommand{\left}{\mathopen{}\mathclose\bgroup\originalleft}
\renewcommand{\right}{\aftergroup\egroup\originalright}

\newcommand{\eps}{\varepsilon}
\renewcommand{\phi}{\varphi}
\newcommand{\m}{\mathfrak{m}}
\newcommand{\Z}{\mathbb{Z}}
\newcommand{\C}{\mathbb{C}}

\newcommand{\Po}{\mathfrak{P}}
\newcommand{\id}{\operatorname{id}}
\newcommand{\Id}{\operatorname{Id}}
\newcommand{\mf}{\mathrm{mf}}
\newcommand{\mffilt}{\mathrm{mf}^\mathrm{filt}}
\newcommand{\Cl}{\operatorname{\mathnormal{C\kern-0.15ex \ell}}}
\newcommand{\diff}{\mathrm{d}}
\newcommand{\rH}{\operatorname{H}}
\newcommand{\HH}{\operatorname{HH}}
\newcommand{\HF}{\operatorname{HF}}
\newcommand{\Fuk}{\mathcal{F}}

\newcommand{\Hom}{\operatorname{Hom}}

\newcommand{\mfE}{\mathscr{E}}
\newcommand{\Hess}{\operatorname{Hess}}
\newcommand{\LMF}[1]{\operatorname{\mathcal{LM}}^{#1}}
\newcommand{\F}{\Phi}
\newcommand{\Ext}{\operatorname{Ext}}
\newcommand{\vv}{\bm{\mathsf{v}}}
\newcommand{\ww}{\widecheck{\bm{\mathsf{w}}}}
\newcommand{\eend}{\operatorname{end}}
\newcommand{\End}{\operatorname{End}}
\newcommand{\muvv}{\mu_{\vv}}
\newcommand{\muzvv}{\mu_{\mathbf{0},\vv}}
\newcommand{\contract}{\mathbin{\oldlrcorner}}
\newcommand{\gr}{\operatorname{gr}}

\newcommand{\im}{\operatorname{im}}

\newcommand{\Jac}{\operatorname{Jac}}
\newcommand{\qprod}{\mathbin{\star}}
\newcommand{\calA}{\mathcal{A}}
\newcommand{\calB}{\mathcal{B}}

\makeatletter
\DeclareRobustCommand{\dgmod}[1]{\begingroup\newmcodes@\mathrm{dgmod-}#1\endgroup}
\makeatother

\addtolength{\hoffset}{-2cm}
\addtolength{\textwidth}{4cm}
\addtolength{\voffset}{-1.5cm}
\addtolength{\textheight}{2.2cm}
\setlength{\footskip}{5ex} 
\setlength{\headsep}{4ex}

\theoremstyle{plain}
\newtheorem{thm}{Theorem}[section]
\newtheorem{lem}[thm]{Lemma}
\newtheorem{prop}[thm]{Proposition}
\newtheorem{cor}[thm]{Corollary}

\crefname{lem}{Lemma}{Lemmas}

\newtheorem{mthm}{Theorem}
\crefname{mthm}{Theorem}{Theorems}

\theoremstyle{remark}

\newenvironment{rmk}
  {\pushQED{\qed}\rmkx}
  {\popQED\endrmkx}

\newenvironment{ex}
  {\pushQED{\qed}\exx}
  {\popQED\endexx}

  \theoremstyle{definition}

\newenvironment{defn}
  {\pushQED{\qed}\defnx}
  {\popQED\enddefnx}

\newenvironment{warn}
  {\pushQED{\qed}\warnx}
  {\popQED\endwarnx}

\title[$A_\infty$-deformations and mirror symmetry]{Superfiltered $A_\infty$-deformations of the exterior algebra,\\and local mirror symmetry}
\author{Jack Smith}
\address{St John's College, Cambridge, CB2 1TP, United Kingdom}
\email{j.smith@dpmms.cam.ac.uk}

\subjclass[2010]{Primary 18G55; Secondary 14F05, 53D37, 14J33, 16E40}

\begin{document}

\begin{abstract}
The exterior algebra $E$ on a finite-rank free module $V$ carries a $\Z/2$-grading and an increasing filtration, and the $\Z/2$-graded filtered deformations of $E$ as an associative algebra are the familiar Clifford algebras, classified by quadratic forms on $V$.  We extend this result to $A_\infty$-algebra deformations $\calA$, showing that they are classified by formal functions on $V$.  The proof translates the problem into the language of matrix factorisations, using the localised mirror functor construction of Cho--Hong--Lau, and works over an arbitrary ground ring.  We also compute the Hochschild cohomology algebras of such $\calA$.

By applying these ideas to a related construction of Cho--Hong--Lau we prove a local form of homological mirror symmetry: the Floer $A_\infty$-algebra of a monotone Lagrangian torus is quasi-isomorphic to the endomorphism algebra of the expected matrix factorisation of its superpotential.
\end{abstract}

\maketitle


\section{Introduction}
\label{secIntroduction}


\subsection{Superfiltered deformations}
\label{sscFilteredDeformations}

Fix a ground ring $R_0$ (associative, commutative, unital) and a free $R_0$-module $V$ of rank $n$.  In this paper we study the exterior algebra $E^* \coloneqq \Lambda^* V$ over $R_0$, and certain deformations of its algebra structure.  There are many natural situations in which one starts with a $\Z$-graded object such as $E$ and then deforms it by adding correction terms of strictly lower degree to its structure maps, which preserve the reduced grading modulo $2$.  The resulting object is no longer $\Z$-graded, but is $\Z/2$-graded and filtered in a compatible way, and its associated graded is naturally identified with the original, undeformed object.  Such deformations of $E$, as an $A_\infty$-algebra, play an important role in mirror symmetry and our main result classifies them.

To make this precise, we introduce the following notion.

\begin{defn}
\label{defSuperfilteredMod}
A module $X$ is \emph{superfiltered} if it carries a $\Z/2$-grading $X = X^0 \oplus X^1$ and an increasing $\Z$-filtration $F^p X$ that is compatible with the grading in the sense that
\[
F^pX = (F^pX \cap X^0) \oplus (F^pX \cap X^1) \quad \text{and} \quad F^{p+1}X \cap X^p = F^pX \cap X^p
\]
for all $p \in \Z$ (here $X^p$ denotes the piece of degree $p$ mod $2$).  Given superfiltered modules $X$ and $Y$, we say a module map $f : X \to Y$ is \emph{superfiltered of degree $r \in \Z$} if it has degree $r$ mod $2$ and sends $F^pX$ into $F^{p+r}Y$.  We call the induced degree $r$ map $\gr X \to \gr Y$ its \emph{leading term}.
\end{defn}

All of our filtrations are assumed to be Hausdorff (the intersection of all filtered pieces is zero) and exhaustive (the union of all filtered pieces is the whole module).

\begin{ex}
\label{exSuperfiltrationFromGrading}
The prototype is a $\Z$-graded module $X^{(*)}$.  This is superfiltered by setting
\[
X^i = \bigoplus_{n \in i + 2\Z} X^{(n)} \quad \text{and} \quad F^pX = \bigoplus_{n \leq p} X^{(n)}.
\]
A superfiltered map $f$ of degree $r$ between two such modules can be written as a sum $f_r+f_{r-2}+f_{r-4}+\dots$, where each $f_i$ has degree $i$.  Its leading term is $f_r$.
\end{ex}

\begin{rmk}
A map which is superfiltered of degree $r$ is of course also superfiltered of degree $r'$ for all $r' \geq r$; the leading term depends on the choice of $r$ but in practice it should always be clear which choice we have in mind.
\end{rmk}

There are natural notions of superfiltered chain complexes, where the differential $\diff$ is superfiltered of degree $1$, and of superfiltered algebras, where the multiplication map and unit map (if relevant) are superfiltered of degree $0$.  Similarly there are superfiltered differential graded (dg-)algebras.  Note that superfiltered complexes are not filtered complexes in the standard sense: the differential maps \emph{up} one level in the filtration, rather than staying within a single filtered piece.

\begin{defn}
A \emph{superfiltered deformation} of a $\Z$-graded module $Y$ is a superfiltered module $X$ equipped with an isomorphism of graded modules $\gr X \cong Y$.  Superfiltered deformations of chain complexes or of (possibly unital) associative or dg-algebras are defined analogously.  Given two superfiltered deformations $X_1$ and $X_2$ of $Y$, a \emph{morphism} $f : X_1 \to X_2$ is a superfiltered map of modules (or chain complexes, algebras, etc) of degree $0$ such that $\gr f : \gr X_1 \to \gr X_2$ intertwines the identifications $\gr X_i \cong Y$.  Note that $\gr f$ is necessarily an isomorphism so if the filtrations of $X_1$ and $X_2$ are bounded (which is equivalent to $Y$ being bounded in degree) then $f$ itself must be an isomorphism.
\end{defn}

\begin{ex}
\label{exSplit}
Superfiltered deformations $X$ of the exterior algebra $E$ as an $R_0$-module are all trivial, i.e.~isomorphic to $E$ itself with its standard superfiltration as in \cref{exSuperfiltrationFromGrading}.  This is because the isomorphism $\gr X \cong E$ gives for each $p$ a short exact sequence
\[
0 \to F^{p-1}X \cap X^p \to F^p X \cap X^p \to E^p \to 0,
\]
and this can be split since $E^p = \Lambda^p V$ is free.  For $p = 0$ and $p = 1$ the sequences simplify to the isomorphisms $F^0 X \cong E^0 = R_0$ and $F^1 X \cap X^1 \cong E^1 = V$.
\end{ex}

\begin{ex}
\label{exClifford}
As a first non-trivial example, let us consider superfiltered deformations $A$ of $E$ as a unital associative algebra.  Given $v \in F^1A \cap A^1 \cong V$, the element $v^2$ lies in $F^2A \cap A^0$, and by the superfiltered condition its image in $\gr^2 A$ is $v \wedge v = 0$.  This means that $v^2$ is actually in $F^1A \cap A^0 = F^0A \cap A^0 \cong R_0$, so $v \mapsto v^2$ defines a quadratic form $Q : V \to R_0$.  Now let $TV$ denote the tensor algebra on $V$, and consider the canonical unital algebra homomorphism $TV \to A$ that extends the inclusion of $V$ into $A$ as $F^1 A \cap A^1$.  This factors through a  map $\pi : \Cl(Q) \to A$, where $\Cl(Q)$ is the Clifford algebra
\begin{equation}
\label{eqCliffordAlgebra}
\Cl(Q) \coloneqq TV / (v \otimes v - Q(v)).
\end{equation}
This Clifford algebra inherits a superfiltration from $TV$, and is naturally a superfiltered deformation of $E$.  The homomorphism $\pi : \Cl(Q) \to A$ is superfiltered of degree $0$, and $\gr \pi$ intertwines the identifications of $\gr \Cl(Q)$ and $\gr A$ with $E$, so $\pi$ is a morphism of superfiltered deformations.  In fact, it is an isomorphism of such, since the filtrations are bounded.
\end{ex}

The upshot of \cref{exClifford} is that associated to any superfiltered deformation $A$ of $E$ as a unital associative algebra is a quadratic form $Q$ on $V$, and $A$ is canonically isomorphic to $\Cl(Q)$ as a superfiltered deformation.  Our goal is to extend this understanding to the corresponding $A_\infty$-deformations of $E$.  There are some extra subtleties in the definitions in this case, so first we recap the standard $\Z$-graded theory.


\subsection{$A_\infty$-algebras and deformations}

Recall that a $\Z$-graded $A_\infty$-algebra $\calA$ over a ring $S$ comprises a $\Z$-graded $S$-module $\calA$ and for each $k \geq 1$ a degree $1$ operation
\[
\mu^k : \calA[1]^{\otimes_S k} \to \calA[1].
\]
These operations should satisfy
\begin{equation}
\label{eqAinfinityRelations}
\sum_{i ,j} (-1)^{\maltese_i} \mu^{k-j+1}(a_k, \dots, a_{i+j+1}, \mu^j(a_{i+j}, \dots, a_{i+1}), a_i, \dots, a_1) = 0
\end{equation}
for all $k$ and all homogeneous $a_1, \dots, a_k \in \calA$, where $\maltese_i = |a_1| + \dots + |a_i| - i$. As usual, $[1]$ denotes shift by $1$ (so $\calA[1]^i = \calA^{i+1}$), and summations without explicitly specified ranges mean `sum over all choices for which the expression makes sense'.  There are various different sign conventions in use but we follow Seidel \cite{SeidelBook}.

The first three relations say (up to the sign twist of \eqref{eqdgAinfinity}) that $\mu^1$ is a differential, $\mu^2$ satisfies a Leibniz rule, and $\mu^2$ is associative up to a homotopy given by $\mu^3$.  The $A_\infty$-structure is \emph{minimal} if $\mu^1=0$, and an element $1_\calA \in \calA^0$ is a \emph{strict unit} if
\[
\mu^2(a, 1_\calA) = a = (-1)^{|a|} \mu^2(1_\calA, a)
\]
for all $a$, and if $\mu^k(a_k, \dots, a_1)$ vanishes when $k \neq 2$ and some $a_i$ is equal to $1_\calA$.  The definitions go through with obvious modifications for $\Z/2$-gradings instead of $\Z$-gradings.

Dg-algebras correspond to those $A_\infty$-algebras with $\mu^k = 0$ for all $k \geq 3$.  Our convention is that the differential $\diff$ and product $\qprod$ are related to $\mu^1$ and $\mu^2$ via
\begin{equation}
\label{eqdgAinfinity}
\diff a = (-1)^{|a|}\mu^1(a) \quad \text{and} \quad a_2 \qprod a_1 = (-1)^{|a_1|}\mu^2(a_2, a_1).
\end{equation}
Specialising further, an associative algebra is an $A_\infty$-algebra with $\mu^k = 0$ for all $k \neq 2$, and a strict unit is then just a unit in the ordinary sense.  We can therefore view $E$ as a strictly unital, minimal $A_\infty$-algebra, and consider superfiltered deformations within this class.  There is, however, some choice in exactly what we mean by this.

To clarify things, recall that an $A_\infty$-algebra homomorphism $\F : \calA_1 \to \calA_2$ comprises a sequence of degree $0$ maps $\F^k : \calA_1[1]^{\otimes k} \to \calA_2[1]$ satisfying
\begin{multline}
\label{eqAinfinityHom}
\sum_{i,j} (-1)^{\maltese_i} \F^{k-j+1}(a_k, \dots, a_{i+j+1}, \mu_{\calA_1}^j(a_{i+j}, \dots, a_{i+1}), a_i, \dots, a_1)
\\ = \sum_r \sum_{\substack{s_1, \dots, s_r \\ s_1+\dots+s_r = k}} \mu_{\calA_2}^r(\F^{s_r}(a_k, \dots, a_{k-s_r+1}), \dots, \F^{s_1}(a_{s_1}, \dots, a_1)).
\end{multline}
A superfiltered deformation of a $\Z$-graded $A_\infty$-algebra $\calB$ could therefore mean a superfiltered $A_\infty$-algebra $\calA$ equipped with an $A_\infty$-algebra isomorphism $\Phi : \gr \calA \to \calB$.  We shall instead use the following slightly stronger definition.

\begin{defn}
\label{defAinftyDeformation}
A superfiltered deformation of a $\Z$-graded $A_\infty$-algebra $\calB$ is a superfiltered $A_\infty$-algebra $\calA$ equipped with an $A_\infty$-algebra isomorphism $\Phi : \gr \calA \to \calB$ such that $\Phi^k = 0$ for all $k \geq 2$.  Equivalently, it is a superfiltered $A_\infty$-algebra $\calA$ equipped with an identification $\Phi^1 : \gr \calA \to \calB$ of the underlying modules under which the operations on $\gr \calA$ coincide with those on $\calB$.
\end{defn}

This is in line with our motivating setup of taking a $\Z$-graded object and adding corrections to the structure maps of strictly lower degree.

\begin{ex}
\label{exBasicAinftyDef}
Take $\calA$ to have the same underlying module as $\calB$, with superfiltration induced by the $\Z$-grading.  Then equip it with $A_\infty$-operations of the form
\begin{equation}
\label{eqBasicDef}
\mu^k_\calA = \mu^k_\calB + \nu^k_{-1} + \nu^k_{-3} + \dots,
\end{equation}
where the $\mu^k_\calB$ are the operations on $\calB$ and the $\nu^k_i$ are maps $\calB[1]^{\otimes k} \to \calB[1]$ of degree $i$.  This is a superfiltered $A_\infty$-deformation of $\calB$ with $\Phi^1 = \id_\calB$.
\end{ex}

\begin{ex}
\label{exAinftyDef}
If $\calB$ is free of finite rank as a graded $R_0$-module then \cref{exBasicAinftyDef} is universal in the following sense.  Given any superfiltered $A_\infty$-deformation $\calA$ of $\calB$, we can lift the given identification $\Phi^1 : \gr \calA \to \calB$ to a superfiltered module isomorphism $\phi : \calA \to \calB$ as in \cref{exSplit}.  The requirement for $\calA$ to be a superfiltered $A_\infty$-deformation of $\calB$ is then that under the identification $\phi$ the operations on $\calA$ are of the form \eqref{eqBasicDef}.  This condition is independent of the choice of lift $\phi$ of $\Phi^1$.
\end{ex}

\begin{defn}
\label{defSuperfiltered}
A \emph{superfiltered $A_\infty$-deformation of $E$} is a superfiltered deformation $\calA$ of $E$ in the sense of \cref{defAinftyDeformation}, which is strictly unital and minimal.
\end{defn}

Before continuing, we record some easy properties of such an $\calA$ that we will use repeatedly.  It has an underlying unital associative algebra, whose product is given by $\qprod$ from \eqref{eqdgAinfinity} and whose unit is the strict unit $1_\calA$.  We'll sometimes denote this algebra by $\rH^*(\calA)$, to distinguish it from the full $A_\infty$-algebra $\calA$, even though the differential on $\calA$ is assumed to vanish.  The map $\Phi^1$ from \cref{defAinftyDeformation} is then, by the $k=2$ version of \eqref{eqAinfinityHom}, an algebra homomorphism $\gr \rH^*(\calA) \to E$.  A priori this homomorphism need not be unital, but unitality is forced by the fact that it's an isomorphism.  Consequently, the $A_\infty$-isomorphism $\Phi : \gr \calA \to E$ is automatically strictly unital, meaning that $\Phi^1(1_\calA) = 1_E$ and $\Phi^k(a_k, \dots, a_1) = 0$ when $k \geq 2$ and some $a_i$ is $1_\calA$.

\begin{rmk}
\label{rmkStrictlyUnital}
Recall that a dg- or $A_\infty$-algebra is cohomologically unital if its cohomology algebra is unital in the ordinary sense, and a dg- or $A_\infty$-homomorphism is cohomologically unital if the induced map on cohomology is unital.  It is a fact that any cohomologically unital $A_\infty$-algebra can be made strictly unital by a formal diffeomorphism given by explicit formulae which respect the grading and filtration, and so can any cohomologically unital $A_\infty$-map between strictly unital algebras; see the discussion in Seidel's book \cite[Section (2a)]{SeidelBook} or Lef\`evre-Hasegawa's thesis \cite[Sections 3.2.1--3.2.2]{LefevreThesis}.  This means that the distinction between cohomological and strict unitality is not fundamentally important.  Our restriction to the strictly unital case merely simplifies the exposition.
\end{rmk}

With \cref{defSuperfiltered} in place, it remains to define a morphism $\calA_1 \to \calA_2$ of superfiltered $A_\infty$-deformations of $E$.  This should clearly be a superfiltered map $\Psi : \calA_1 \to \calA_2$ of strictly unital $A_\infty$-algebras satisfying an extra condition, but there are two obvious choices for this condition, reflecting the two different perspectives of \cref{defAinftyDeformation}.  One could ask either that $\gr \Psi$ intertwines the two $A_\infty$-isomorphisms $\Phi_i : \gr \calA_i \to E$ or merely that $\gr \Psi^1$ intertwines the module identifications $\Phi_i^1 : \gr \calA_i \to E$.  In both cases, such morphisms are automatically invertible (one can write down an explicit inductive construction for the inverse), so we will call them equivalences.  We will refer to the two versions as \emph{$\infty$-equivalences} and \emph{$1$-equivalences}, since they correspond to the conditions $(\Phi_2 \circ \gr \Psi)^r = \Phi_1^r$ for $r=1, 2, \dots$ and $r = 1$ respectively.  It will be convenient for us to talk about the obvious intermediate notions so we make the following definition.

\begin{defn}
\label{defSuperfilteredEquivalence}
Fix superfiltered $A_\infty$-deformations $\calA_1$ and $\calA_2$ of $E$, with corresponding$A_\infty$- isomorphisms $\Phi_i : \gr \calA_i \to E$.  For $d \in \{1, 2, \dots, \infty\}$, a \emph{$d$-equivalence} $\calA_1 \to \calA_2$ is a superfiltered map $\Psi : \calA_1 \to \calA_2$ of strictly unital $A_\infty$-algebras satisfying $(\Phi_2 \circ \gr \Psi)^r = \Phi_1^r$ for $r = 1, \dots, d$.  A \emph{$0$-equivalence} is a superfiltered map $\Psi$ such that $\gr \Psi^1$ is an arbitrary isomorphism of graded modules $\gr \calA_1 \to \gr \calA_2$.  For all $d$, $d$-equivalence is easily checked to be an equivalence relation.
\end{defn}

\begin{rmk}
A $0$-equivalence is simply a $\Z/2$-graded $A_\infty$-isomorphism respecting the filtration.  A $1$-equivalence amounts to the same thing but with the additional condition that it respects the identifications $F^1\calA_i \cap \calA_i^1 \cong V$.
\end{rmk}

\begin{ex}
\label{exSuperfilteredEquivalence}
Suppose we lift each $\Phi_i^1$ to an identification of $\calA_i$ with $E$, as in \cref{exAinftyDef}.  Then a superfiltered $A_\infty$-algebra map $\Psi : \calA_1 \to \calA_2$ can be viewed as a sequence of maps $\Psi^r : E[1]^{\otimes r} \to E[1]$, each of which decomposes as
\[
\Psi^r_0 + \Psi^r_{-2} + \Psi^r_{-4} + \dots
\]
where $\Psi^r_i$ has degree $i$.  For $d \in \{1, 2, \dots, \infty\}$, $\Psi$ is a $d$-equivalence if and only if for $r=1, \dots, d$ the leading term $\Psi^r_0$ coincides with $\Id_E^r$, where $\Id_E$ is the identity $A_\infty$-automorphism of $E$ (i.e.~$\operatorname{Id}_E^1 = \id_E$ and $\operatorname{Id}_E^{\geq 2} = 0$).  It's a $0$-equivalence if and only if $\Psi^1_0$ is an arbitrary module automorphism of $E$.
\end{ex}


\subsection{Motivation}
\label{sscMotivation}

Dg-algebras $A$ are ubiquitous in homological algebra, as the endomorphism algebras of chain complexes $C$.  Passing to the cohomology $\rH^*(A)$ is convenient, but loses information in the sense that $A$ and $\rH^*(A)$ are not usually quasi-isomorphic as dg-algebras (taking the differential on $\rH^*(A)$ to be zero).  Under mild hypotheses, however, one can equip $\rH^*(A)$ with higher $A_\infty$-operations such that it becomes quasi-isomorphic to $A$ as an $A_\infty$-algebra.  An explicit construction, which we shall use later, was given by Markl \cite{Markl}, building on \cite{GugenheimStasheff,Merkulov,KontsevichSoibelman}.  The resulting $A_\infty$-algebra, with underlying module $\rH^*(A)$, is called a \emph{minimal model} for $A$.  Superfiltered deformations of $C$ frequently arise in nature, and lead to superfiltered deformations of $A$.  In good situations, it is reasonable to expect that these in turn induce superfiltered $A_\infty$-deformations of the minimal model.

This phenomenon is illustrated nicely in the study of \emph{matrix factorisations}, which are important objects in algebraic geometry and whose filtered versions are central to this paper.  We recall the precise definitions in \cref{secSurj}, but for now suppose we're given a regular function $w$ on a smooth variety $X$, and a coherent sheaf $\mathscr{F}$ on $X$ whose support is contained in $w^{-1}(0)$.  An algebraically-minded reader can think of $w$ as an element of a ring $R$, and $\mathscr{F}$ as a finitely-generated $R$-module of finite projective dimension that is annihilated by $w$.  Let $(C, \diff_1)$ be a finite projective resolution of $\mathscr{F}$, and $A$ its endomorphism algebra, so $\rH^*(A) = \Ext^*(\mathscr{F}, \mathscr{F})$.  Since multiplication by $w$ annihilates $\mathscr{F}$, it acts nullhomotopically on $C$, so there exists a degree $-1$ map $\diff_{-1} : C \to C$ such that $\diff_1 \diff_{-1} + \diff_{-1} \diff_1 = w \id_C$.  Then $(\diff_{-1})^2$ is a chain map of degree $-2$, and it is also nullhomotopic since $\Ext^{-2}(\mathscr{F}, \mathscr{F}) = 0$, so there exists a degree $-3$ map $d_{-3} : C \to C$ satisfying $\diff_1 + \diff_{-3} + \diff_{-3} \diff_1 = (\diff_{-1})^2$.  Continuing in this way, we build a superfiltered map
\[
\diff \coloneqq \diff_1 + \diff_{-1} + \diff_{-3} + \dots : C \to C
\]
satisfying $\diff^2 = w$.  The superfiltered complex $(C, \diff)$ is a matrix factorisation of $w$.  It is a superfiltered deformation of $(C, \diff_1)$, and its endomorphism dg-algebra is a superfiltered deformation of $A$.

We'll focus on the case where $X = V$, or more precisely a formal neighbourhood of the origin in $V$, and $\mathscr{F}$ is the skyscraper sheaf at the origin.  Algebraically, this means that $R$ is the ring of formal functions on $V$, and $\mathscr{F} = R/ \m$, where $\m$ is the kernel of the `evaluate at $0$' map.  In this case $\rH^*(A)$ is identified with $E$, and if $w$ is a formal function on $V$ with $w(0) = 0$, i.e.~$w \in \m$, then we can apply the above recipe to obtain a matrix factorisation $\mfE_0$.  We'll see later that if $0$ is actually a singular point of $w$, i.e.~$w \in \m^2$, then a minimal model for the endomorphism algebra of $\mfE_0$ is indeed a superfiltered $A_\infty$-deformation of $E$.  Our main result is essentially that $w$ can be recovered from this superfiltered $A_\infty$-deformation, and that \emph{every} superfiltered $A_\infty$-deformation arises in this way, up to $\infty$-equivalence.

Our interest in this classification problem originated in homological mirror symmetry, which predicts certain equivalences between Fukaya categories of symplectic manifolds and algebro-geometric categories of sheaves or matrix factorisations.  No knowledge of geometry is required to understand this paper, except for the short \cref{secMonotoneTori} at the end, but let us briefly summarise the relevant ideas to provide some context for the interested reader.  On the symplectic side of mirror symmetry, a particularly important role is played by the \emph{Floer $A_\infty$-algebras} of Lagrangian tori $L$.  These are obtained by adding `quantum corrections' to the operations on the ordinary cohomology algebra $\rH^*(L)$, which is simply the exterior algebra $E$ on $V = \rH^1(L)$ with its standard $A_\infty$-structure.  In general these quantum corrections respect the $\Z/2$-grading but may be very complicated.  However, a natural geometric hypothesis, \emph{monotonicity} of $L$, ensures that they are degree-decreasing and hence that the resulting Floer algebra is (modulo some technicalities) a superfiltered $A_\infty$-deformation of $E$.  This deformation is defined up to $1$-equivalence.  Under mirror symmetry a monotone torus is expected to correspond to a matrix factorisation arising from the skyscraper sheaf of a point $\rho$, as in the previous paragraph.  Proving mirror symmetry locally about $\rho$ therefore amounts to relating two superfiltered $A_\infty$-deformations of $E$, and this is what we do in \cref{secMonotoneTori}.


\subsection{Classification}

Before stating our main classification result, we need some notation.  As above, let $R$ denote the ring of functions on a formal neighbourhood of $0$ in $V$, or equivalently the ring of power series in elements of $V^\vee$.  Let $\m$ be the ideal of $R$ comprising those functions vanishing at the origin (this is the unique maximal ideal in $R$ if $R_0$ is a field).  To a superfiltered $A_\infty$-deformation $\calA$ of $E$, one can associate an element of $R$ as follows.

\begin{defn}
\label{defDiscPot}
The \emph{disc potential} $\Po \in R$ of $\calA$ is the formal function on $V$ given by
\[
v \mapsto \sum_{k \geq 2} \mu^k(v, \dots, v).
\]
Here we are viewing $v$ as an element of $\calA$ via the identification $F^1\calA \cap \calA^1 \cong V$.  A priori this function takes values in $F^2\calA \cap \calA^0$ but, similarly to the associative algebra case, the leading term is $\sum_{k \geq 2} \mu_E^k(v, \dots, v) = -v \wedge v = 0$ so the function actually lands in $F^0\calA = R_0$.

Since $\Po$ has no constant or linear terms, it is in fact an element of $\m^2 \subset R$.
\end{defn}

\begin{rmk}
This function was introduced by Fukaya--Oh--Ohta--Ono in \cite{FOOOBook}.  We insert the word `disc' into the name, following Sheridan \cite{SheridanFano}, to distinguish this from the plethora of other `potentials' one may encounter.
\end{rmk}

Another way to think of $\Po$, which will be useful later, is as follows.  Let $\vv \in \m \otimes V$ denote the germ of the Euler vector field.  If $v_1, \dots, v_n$ is a basis for $V$, and $x_1, \dots, x_n$ are the dual coordinates on $V$ so that $R = R_0 \llbracket x_1, \dots, x_n\rrbracket$ and $\m = (x_1, \dots, x_n)$, then $\vv = x_1v_1 + \dots + x_nv_n$.  We can then express $\Po$ by extending the $\mu^k$ $R$-multilinearly and setting
\[
\Po = \sum_{k \geq 2} \mu^k(\vv, \dots, \vv) = \sum_{k \geq 2} \sum_{i_1, \dots, i_k} \mu^k(v_{i_k}, \dots, v_{i_1}) x_{i_1}\cdots x_{i_k}.
\]

\begin{warn}
\label{warnPoConventions}
The disc potential can be defined for any choice of $A_\infty$-algebra sign conventions, but its value depends on this choice.  For example, some authors use operations $\widetilde{\mu}^k$ such that $\widetilde{\mu}^2$ induces the associative product $\qprod$ on cohomology without the sign $(-1)^{\lvert a_1 \rvert}$ appearing in \eqref{eqdgAinfinity}.  The quadratic parts of the disc potentials defined using the $\widetilde{\mu}^k$ and the $\mu^k$ then have opposite signs.
\end{warn}

\begin{defn}
For $d \in \{0, 1, 2, \dots, \infty\}$, formal functions $\Po_1$ and $\Po_2$ in $R$ are \emph{$d$-equivalent} if there exists an invertible formal change of variables $f : V \to V$ such that $\Po_1 = \Po_2 \circ f$, and such that $f = \id_V$ modulo $\m^{d+1}$.  Invertibility follows automatically from the latter if $d \geq 1$.
\end{defn}

Denoting $d$-equivalence of superfiltered $A_\infty$-deformations or of disc potentials by $\sim_d$, our main algebraic result is the following.

\begin{mthm}[\cref{propMainThm}]
\label{Theorem1}
For all $d \in \{0, 1, 2, \dots, \infty\}$, the map $\calA \mapsto \Po$ induces a bijection
\begin{equation}
\label{eqMainBijection}
\{\text{superfiltered $A_\infty$-deformations of $E$}\} \mathclose{}/\mathopen{} \sim_d \quad \longrightarrow \quad \m^2 \mathclose{}/\mathopen{} \sim_d.
\end{equation}
\end{mthm}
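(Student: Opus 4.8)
The plan is to prove the bijection \eqref{eqMainBijection} by translating the whole problem into the world of matrix factorisations via the localised mirror functor of Cho--Hong--Lau, and then exploiting the fact that on the matrix-factorisation side $d$-equivalence becomes transparent. More precisely, to a superfiltered $A_\infty$-deformation $\mathcal{A}$ of $E$ one associates, by extending the operations $R$-multilinearly and plugging in the canonical element $\vv \in \m \otimes V$ in all but one slot, a curved $A_\infty$-structure on the rank-one free $R$-module; the Maurer--Cartan-type equation packaging associativity forces the curvature to be multiplication by $\Po = \sum_{k\ge 2}\mu^k(\vv,\dots,\vv) \in \m^2$, so one obtains a matrix factorisation of $\Po$ (with underlying module $R\otimes_{R_0} E$ and a twisted differential built from the $\mu^k$). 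I would first set up this assignment carefully, checking that strict unitality, the grading, and the filtration are all respected, so that $\mathcal{A}\mapsto \Po$ factors through the construction $\mathcal{A} \mapsto (\text{matrix factorisation } M_{\mathcal{A}})$.

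Second, I would establish \textbf{surjectivity}: given any $\Po\in\m^2$, exhibit an explicit superfiltered $A_\infty$-deformation of $E$ whose disc potential is $\Po$. The natural candidate is the Koszul-type / Clifford-type model: write $\Po$ in terms of a symmetric "Hessian" tensor and higher Taylor coefficients, and build the $\mu^k$ so that $\mu^2$ recovers the Clifford product of the quadratic part while the higher $\mu^k$ encode the higher-order part of $\Po$. Checking the $A_\infty$-relations for this model is a computation, but it is the same computation that shows $\End$ of the corresponding matrix factorisation is a well-defined $A_\infty$-algebra, so I would phrase it that way and cite the CHL formalism. This also shows every Clifford algebra, and more generally every formal function, is realised.

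Third, \textbf{injectivity modulo $\sim_d$}: I must show that if $\Po_1 \sim_d \Po_2$ via a change of variables $f$ with $f\equiv\id \bmod \m^{d+1}$, then the corresponding deformations are $d$-equivalent, and conversely that a $d$-equivalence $\F:\mathcal{A}_1\to\mathcal{A}_2$ induces such an $f$ with $\Po_1 = \Po_2\circ f$. For the forward direction, an $A_\infty$-map $\F$ with components $\F^k$ gives, after the same $R$-multilinear extension and substitution of $\vv$, a change of variables $f_\F$ whose linear part is $\gr\F^1 = \id$ and whose lowest-order correction is controlled by the first $k$ with $\gr\F^k\neq\Id_E^k$; the chain-map equation for $\F$ translates into $\Po_1 = \Po_2\circ f_\F$, and $\gr\F^k = \Id_E^k$ for $k\le d$ forces $f_\F\equiv\id \bmod\m^{d+1}$. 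For the reverse direction, given $f$ one builds the $A_\infty$-map by the inverse of the construction in the surjectivity step --- essentially the functoriality of the CHL localised mirror functor in the curved module. The content here is that the localised mirror functor is \emph{faithful enough}: two deformations with the same potential (up to $\sim_d$) have $d$-equivalent mirror matrix factorisations, hence are themselves $d$-equivalent. This uses that the functor from superfiltered $A_\infty$-deformations to matrix factorisations of functions on $\widehat V$ is, on the relevant subcategory, an equivalence respecting the filtration degree.

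The \textbf{main obstacle} I expect is the injectivity direction, specifically showing that an abstract $d$-equivalence of deformations --- which a priori involves infinitely many components $\F^k$ valued in all of $E$, not just $E^0\oplus E^1$ --- can be \emph{rectified} to one coming from a change of variables $f:V\to V$, so that the induced relation on potentials is exactly $\Po_1=\Po_2\circ f$ with the sharp congruence $f\equiv\id\bmod\m^{d+1}$. Handling this requires an inductive order-by-order argument in the $\m$-adic (equivalently filtration) degree: at each stage one splits the obstruction to matching $\gr\F^k$ with $\Id_E^k$ into a part absorbed by modifying $f$ and a part killed by a gauge transformation of the matrix factorisation, using the contractibility of the relevant deformation complex (a Koszul-type acyclicity, valid over an arbitrary $R_0$ because $\vv$ is a regular-enough element). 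Getting this bookkeeping right --- in particular tracking how a correction of order $\m^j$ to $f$ interacts with the $A_\infty$-relations and with strict unitality --- is where the real work lies; everything else is either the explicit CHL dictionary or formal nonsense about $A_\infty$-maps.
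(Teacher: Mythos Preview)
Your overall strategy---translate to matrix factorisations via Cho--Hong--Lau---matches the paper. But you have inverted the difficulty, and the step you label ``main obstacle'' is in fact trivial, while the step you dismiss as ``functoriality of CHL'' is where all the content lies.

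Well-definedness (what you call the ``forward direction'') requires no rectification: given any $d$-equivalence $\F:\mathcal{A}_1\to\mathcal{A}_2$, simply set $f_\F=\sum_{k\ge 1}\F^k(\vv,\dots,\vv)$. Plugging $\vv$ into every slot of the $A_\infty$-homomorphism equations immediately yields $\Po_1=\Po_2\circ f_\F$, and $\gr\F^k=\Id_E^k$ for $k\le d$ forces $f_\F\equiv\id\bmod\m^{d+1}$. There is nothing to rectify: the components of $\F$ landing outside $E^0\oplus E^1$ simply disappear when you evaluate at $\vv^{\otimes k}$ and project to $E^0$. This is a two-line check, not an obstacle.

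The genuine obstacle is injectivity: from $\Po_1\sim_d\Po_2$, \emph{construct} a $d$-equivalence $\mathcal{A}_1\to\mathcal{A}_2$. Invoking ``faithfulness'' of the CHL functor does not do this---faithfulness concerns morphisms, not identification of objects. What the paper does is more specific. Applying the localised mirror functor to $\mathcal{A}$ itself produces a matrix factorisation $\mfE$ of $\Po$ and an $A_\infty$-map $\F:\mathcal{A}\to\eend(\mfE)$. One then shows (via a spectral sequence comparing leading terms of the squifferentials) that $\mfE$ is isomorphic, as a filtered matrix factorisation, to the explicit stabilisation $\mfE_0$ of $R_0$; conjugation gives $\Psi:\eend(\mfE)\to\eend(\mfE_0)=\mathcal{B}_0$. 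Homological perturbation (Markl's formulae, checked to respect the filtration and strict unitality) gives $\Pi:\mathcal{B}_0\to\mathcal{B}_0^{\mathrm{min}}$, and the crucial computation is that the composite $\Pi\circ\Psi\circ\F:\mathcal{A}\to\mathcal{B}_0^{\mathrm{min}}$ has leading term exactly $\Id_E$, hence is an $\infty$-equivalence. Since $\mathcal{B}_0^{\mathrm{min}}$ depends only on $\Po$, this settles $d=\infty$.

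For general $d$ the paper does not argue order-by-order as you suggest. Instead, given $\Po_1=\Po_2\circ f$ with $f\equiv\id\bmod\m^{d+1}$, one lifts $f$ to a formal diffeomorphism $\Delta$ of $E$ (linear part from the linear part of $f$, higher components from the higher Taylor coefficients, extended by zero off degree $1$) and pushes forward to get $\Delta_*\mathcal{A}_1$, whose potential is now literally $\Po_2$. The $d=\infty$ case gives $\Delta_*\mathcal{A}_1\sim_\infty\mathcal{A}_2$; precomposing with $\Delta$ gives the required $d$-equivalence $\mathcal{A}_1\to\mathcal{A}_2$. This two-step reduction is much cleaner than a direct inductive argument in $d$.
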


One could summarise this with the slogan: superfiltered $A_\infty$-deformations $\calA$ of $E$ are determined by the values of the (symmetrised) $A_\infty$-operations on filtration level $1$.  This is trivial when $n=1$, and in this case $\Po$ is in fact the full generating function of the $A_\infty$-operations not already prescribed by strict unitality.  When $n>1$, however, it is much less obvious.

\begin{rmk}
First order deformations of $E$ as a $\Z$-graded (or $\Z/2$-graded) $A_\infty$-algebra are classified by the Hochschild cohomology group $\HH^2(E)$ (respectively $\HH^\mathrm{even}(E)$).  The corresponding group for first order superfiltered deformations is $\HH^{\mathrm{even} < 2}(E)$.  The Hochschild--Kostant--Rosenberg Theorem \cite{HochschildKostantRosenberg} gives $\HH^*(E) \cong R \otimes E^*$, so first order superfiltered deformations are classified by $R \otimes E^0 = R$, and in fact the HKR map $\HH^0(E) \to R$ corresponds precisely to sending a deformation to its disc potential.  So \cref{Theorem1} says that first order deformations extend uniquely to genuine deformations modulo $\infty$-equivalence.  Related ideas are discussed in \cref{secObstructionDeformation}.
\end{rmk}

\begin{rmk}
\label{rmkGeneralisesClifford}
The classification of associative deformations in terms of quadratic forms can be seen as the truncation of this result to `degree $2$'.  Indeed, by definition of $\Po$ its quadratic part $-Q$ is such that the associative algebra underlying $\calA$ is $\Cl(Q)$.
\end{rmk}

\begin{rmk}
\label{rmkBeforeChangeOfVariables}
Suppose that $R_0$ is a field and that $A$ is an augmented $R_0$-algebra.  Under various hypotheses, it is known that from the $A_\infty$-structure maps $\mu^k : \Ext^1_A(R_0, R_0)^{\otimes k} \to \Ext^2_A(R_0, R_0)$ one can recover $A$, and hence the full $A_\infty$-algebra $\Ext_A^*(R_0, R_0)$; see for example Keller \cite{KellerAinfinity}, Lu--Palmieri--Wu--Zhang \cite{LuPalmieriWuZhang}, and Segal \cite{SegalAinfinity}.  In other words, $\Ext_A^*(R_0, R_0)$ is completely determined by the restriction of its operations to degree $1$.  This is very similar to our slogan for superfiltered deformations $\calA$, and the key step in our argument is essentially to realise $\calA$ as the $\Ext$-algebra of $R_0$ but in a matrix factorisation category.  I thank Ivan Smith for pointing out this connection.
\end{rmk}

It is not hard to see that a $d$-equivalence $\F : \calA_1 \to \calA_2$ induces a $d$-equivalence $\Po_1 = \Po_2 \circ f$ between the corresponding disc potentials: take $f$ to be the formal change of variables $f_\F$ given by
\[
f_\F = \sum_{k \geq 1} \F^k(\vv, \dots, \vv);
\]
if $\gr \F^k = \Id_E^k$ for $k \leq d$ then $f_\F = \vv + \sum_{k\geq d+1} \F^k(\vv, \dots, \vv)$, so $f_\F = \id_V$ modulo $\m^{d+1}$.  Thus \eqref{eqMainBijection} is well-defined, and the non-trivial task is to prove surjectivity and, more interestingly, injectivity.


\subsection{Idea of proof}
\label{sscOutline}

The strategy is as follows.  Given $w \in \m^2$ there is a $\Z/2$-graded dg-category $\mffilt(R, w)$ of filtered matrix factorisations of $w$.  This contains a distinguished (up to isomorphism) object $\mfE_0$, whose construction we sketched in \cref{sscMotivation}.  The endomorphism dg-algebra $\calB_0$ of $\mfE_0$ is a superfiltered deformation of the endomorphism algebra of the Koszul resolution of this module.  We apply Markl's recipe from \cite{Markl} to construct a minimal model $\calB_0^\mathrm{min}$ for $\calB_0$ and verify that it is a superfiltered $A_\infty$-deformation of $E$.  It comes equipped with an $A_\infty$-algebra quasi-isomorphism $\Pi : \calB_0 \to \calB_0^\mathrm{min}$.

The algebras $\calB_0$ and $\calB_0^\mathrm{min}$ were studied by Dyckerhoff, who showed \cite[Theorem 5.9]{Dyckerhoff} that the quadratic form defining the associated Clifford algebra $\rH^*(\calB_0)$ is the quadratic part of $-w$, and stated a formula relating the coefficients of the disc potential of $\calB_0^\mathrm{min}$ to the Taylor coefficients of $w$, up to sign.  In \cref{sscComputingPotential} we spell out the details of this computation and deduce

\begin{mthm}[\cref{propDiscPotential}]
\label{thmB0Potential}
The disc potential of $\calB_0^\mathrm{min}$ is $w$ itself.
\end{mthm}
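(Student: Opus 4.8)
The plan is to compute the disc potential directly from an explicit minimal model of $\mathcal{B}_0$, following Dyckerhoff. First I would fix a basis $e_1,\dots,e_n$ of $V$ with dual coordinates $x_1,\dots,x_n\in\m$ on $R$, and write $w=\sum_i x_iw_i$ with $w_i\in\m$ (possible since $w\in\m^2$). Then $\mfE_0$ is the Koszul-type factorisation with underlying module $\Lambda^\bullet(\bigoplus_i R\xi_i)$ and differential $d=\sum_i x_i\bar\psi_i+\sum_i w_i\psi_i$, where $\psi_i=\xi_i\wedge(-)$ and $\bar\psi_i=\partial/\partial\xi_i$; the Clifford relations $\{\psi_i,\bar\psi_j\}=\delta_{ij}$ and $\{\psi_i,\psi_j\}=\{\bar\psi_i,\bar\psi_j\}=0$ give $d^2=w\cdot\id$, so $\mathcal{B}_0=\End_R(\mfE_0)$ is the Clifford algebra of $V\oplus V^\vee$ over $R$ equipped with the inner differential $\diff=[d,-]$, for which $\diff\psi_i=x_i\cdot\id$ and $\diff\bar\psi_i=w_i\cdot\id$. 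The subalgebra $\Lambda(\bar\psi_1,\dots,\bar\psi_n)$ carries the zero differential and realises $E=\Lambda V$ at the associated-graded level of $\rH^*(\mathcal{B}_0)$, with $e_i$ corresponding to $\bar\psi_i$. The guiding observation is that $\hat{\vv}:=\sum_i x_i\bar\psi_i$ --- which is precisely the Koszul part of $d$ and reduces to $\vv$ at the associated-graded level --- satisfies $\diff\hat{\vv}=w\cdot\id$ and $\hat{\vv}^2=0$, so its Maurer--Cartan curvature in $\mathcal{B}_0$ is exactly $w\cdot\id$: morally this is why the disc potential should be $w$.

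To turn this into a proof I would construct $\mathcal{B}_0^\mathrm{min}=(E,(\mu^k)_{k\geq2})$ by homotopy transfer. Fix a Koszul homotopy retract of $(\mathcal{B}_0,\diff_0)$ onto $(E,0)$ for the differential $\diff_0$ in the undeformed case $w=0$, chosen --- e.g.\ symmetrically --- so that the contracting homotopy $h$ trades a coordinate function $x_i$ for the wedge operator $\psi_i$, and perturb by $\delta=\diff-\diff_0=[\sum_i w_i\psi_i,-]$, which is $\m$-adically small. Homotopy transfer along the perturbed retract yields the operations $(\mu^k)$ together with an $A_\infty$-quasi-isomorphism $\iota:\mathcal{B}_0^\mathrm{min}\to\mathcal{B}_0$ whose linear part is the closed lift of $\bar\psi_i$ supplied by the perturbation series, namely $\bar\psi_i$ plus wedge-operator corrections built from the $w_i$; since $\m$ is annihilated on cohomology there is no induced $\mu^1$. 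One then evaluates $\Po=\sum_{k\geq2}\sum_{i_1,\dots,i_k}x_{i_1}\cdots x_{i_k}\,\mu^k(e_{i_1},\dots,e_{i_k})$. The $E^2$-component of each $\mu^k(\vv,\dots,\vv)$ drops out after summing against the symmetric tensor, while the $E^0$-component is read off from the tree formula for the transferred operations: every internal edge contributes an $h$ that converts the coordinate functions supplied by the relations $\diff\bar\psi_i=w_i$ back into wedge operators, the internal vertices are Clifford products, and the surviving scalars come from $\{\psi_i,\bar\psi_j\}=\delta_{ij}$. Once the combinatorics is organised, each Taylor coefficient of $w$ is reproduced exactly once, giving $\Po=w$ up to an overall sign (the constant and linear Taylor terms of $w$ vanishing since $w\in\m^2$); the sign is then pinned down by comparing the quadratic truncation with Dyckerhoff's identification $\rH^*(\mathcal{B}_0)\cong\Cl(\text{quadratic part of }-w)$, cf.\ \cref{rmkGeneralisesClifford}.

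The main obstacle is the bookkeeping in this second step: keeping track of signs throughout, and --- more delicately --- showing that the combinatorial weights in the tree sums collapse so that each Taylor coefficient of $w$ appears with multiplicity exactly $1$, uniformly in $k$. A useful way to organise and sanity-check this is the generating-function heuristic behind the slogan after \cref{Theorem1}: $\Po=\sum_k\mu^k(\vv,\dots,\vv)$ collects all the $A_\infty$-operations evaluated on filtration level $1$, and these must reassemble the Taylor expansion of $w$, which is just $w$. Finally I should note that the clean Maurer--Cartan argument above does not by itself close the proof: $\hat{\vv}$ is not $\diff$-closed and differs from $\iota_*(\vv)=\sum_k\iota^k(\vv,\dots,\vv)$ already modulo $\m^2$, hence is not gauge equivalent to it, so upgrading the Maurer--Cartan picture to a proof would require either a compatible choice of perturbation data or the full-faithfulness properties of the localised mirror functor, and is not evidently shorter than the direct computation.
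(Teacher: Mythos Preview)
Your strategy coincides with the paper's: construct $\mathcal{B}_0^\mathrm{min}$ by homotopy transfer and evaluate $\sum_k\mu^k_\mathrm{min}(\vv,\dots,\vv)$ via Markl's tree formulae. Your convention (realising $E$ inside $\rH^*(\mathcal{B}_0)$ via contraction operators $\bar\psi_i$ rather than wedge operators) is dual to the paper's but equivalent, and your Maurer--Cartan heuristic is correct and, as you say, not itself a proof.

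The genuine gap is precisely the step you flag as the main obstacle: you assert that the tree sums ``collapse so that each Taylor coefficient of $w$ appears with multiplicity exactly $1$'' but give no mechanism. The paper supplies one, and it is a filtration argument rather than a combinatorial identity. The point is that the square of the closed lift $\widehat{\vv}_t=\sum_i t_i\iota(v_i)$ is a \emph{scalar} multiple $\Lambda\cdot\id_{E_R}$ of the identity (an exact Clifford relation, not merely to leading order), hence sits two steps below its naive filtration level. Consequently every binary tree in which some internal vertex has two non-leaf children lands in $F^{\leq -1}\mathcal{B}_0$, which is annihilated by $\pi$ since $\pi(f)=f(1)\bmod\m$ and $F^{-1}E_R=0$. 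Only comb trees survive, and these telescope into the nested graded commutator
\[
\Po_0(t)=\sum_{k\geq 0}(-1)^k\,\pi\,\big[\widehat{\vv}_t,\eta\big[\dots,\eta\big[\widehat{\vv}_t,\eta(\Lambda\,\id_{E_R})\big]\dots\big]\big].
\]
The same filtration reasoning allows one to replace $\widehat{\vv}_t$ and $\eta$ by their leading parts $\vv_t\wedge\bullet$ and $\eta_{-1}$, after which a single explicit formula for the homotopy on $\m\cdot\id_{E_R}$---namely $\eta_{-1}(r\,\id)=-\sum_i m_i(r)\,v_i^\vee\contract\bullet$ for chosen $R_0$-linear splittings $r=\sum_i x_im_i(r)$---gives $[\vv_t\wedge\bullet,\eta_{-1}(r\,\id)]=-\big(\sum_i t_im_i(r)\big)\id$. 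Iterating, the $k$th bracket extracts exactly the degree-$k$ part (in $x$) of $\Lambda$ and substitutes $x\mapsto t$, so the sum is $w(t)$ on the nose, with no separate sign-fixing needed. Your plan to pin down the sign from the quadratic part would in any case only apply directly when $w$ has nonzero quadratic term (though the sign is universal, so one test example would suffice).
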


This is essentially well-known---for example, it appears in \cite[Proposition 7.1]{SheridanCY} in characteristic zero and under the assumption (irrelevant for our purposes) that the quadratic part of $w$ vanishes---but we present it for completeness.

\begin{rmk}
The algebra $\calB_0^\mathrm{min}$ has also been studied in detail in recent work of Tu \cite{TuCategoricalSaito}, using ideas from deformation quantisation and Kontsevich formality.
\end{rmk}

\begin{warn}
There are two sources of sign ambiguity in \cref{thmB0Potential}.  One arises from the choice of $A_\infty$-algebra sign conventions as explained in \cref{warnPoConventions}.  The other arises from the identification of $\gr \calB_0^\mathrm{min}$, or equivalently of $\Ext_R(R/\m, R/\m)$, with $E$.  We use the identification that corresponds to resolving $R/\m$ by the Koszul complex $(R \otimes E, -\vv \wedge \bullet)$, with an element $e \in E$ acting on this resolution by $e \wedge \bullet$.  It may be considered more natural to use the differential $\vv \wedge \bullet$ instead, which would modify the identification $\Ext^i_R(R_0, R_0) \cong E^i$ by $(-1)^i$, and the $w = w(x)$ in \cref{thmB0Potential} would then become $w(-x)$.  The reason we keep the minus sign in the Koszul differential is so that it matches with a sign arising naturally in \cref{secmfE}.
\end{warn}

An immediate consequence of \cref{thmB0Potential} is

\begin{cor}
\label{corSurjective}
For all $d$ the map \eqref{eqMainBijection} is surjective.\hfill$\qed$
\end{cor}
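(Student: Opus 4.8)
The plan is to deduce surjectivity of \eqref{eqMainBijection} directly from \cref{thmB0Potential}, which asserts that the minimal model $\mathcal{B}_0^{\mathrm{min}}$ of the endomorphism algebra of the stabilised object $\mfE_0 \in \mf(R,w)$ is a superfiltered $A_\infty$-deformation of $E$ whose disc potential is exactly $w$. Concretely, given any $\Po \in \m^2$, I would take $w \coloneqq \Po$; then $\mathcal{B}_0^{\mathrm{min}}$ is a superfiltered $A_\infty$-deformation of $E$ whose image under $\mathcal{A} \mapsto \Po$ is $w = \Po$. Hence every element of $\m^2$ is hit on the nose, which gives surjectivity of the set-level map before passing to equivalence classes, and therefore a fortiori of \eqref{eqMainBijection} for every $d \in \{0,1,2,\dots,\infty\}$.

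The only points to verify are that the construction of $\mathcal{B}_0^{\mathrm{min}}$ genuinely applies to an arbitrary $w \in \m^2$ and genuinely yields a superfiltered $A_\infty$-deformation of $E$ — but this is precisely what was established in \cref{sscIntromf}: for $w \in \m \subset R$ one stabilises $R_0 = R/\m$ to obtain $\mfE_0$, its filtered endomorphism dg-algebra $\mathcal{B}_0$ has $\gr \rH^*(\mathcal{B}_0) \cong E$ once $w \in \m^2$, and transferring the $A_\infty$-structure to cohomology produces the minimal model $\mathcal{B}_0^{\mathrm{min}}$, a superfiltered $A_\infty$-deformation of $E$. (The transfer requires a minimal model to exist; over a general $R_0$ this is not automatic, but in the present situation the underlying complex of $\mathcal{B}_0$ is built from the Koszul complex and one can carry it out explicitly, as is implicit in the earlier discussion — alternatively one may invoke that the relevant cohomology modules are free.) Then \cref{thmB0Potential} identifies its disc potential with $w$.

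Since the argument is this short, there is really no main obstacle beyond \cref{thmB0Potential} itself, whose proof is deferred to \cref{sscComputingPotential}; assuming it, surjectivity is immediate. I would simply record the one-line deduction: for $\Po \in \m^2$, the deformation $\mathcal{B}_0^{\mathrm{min}}$ with $w = \Po$ maps to $\Po$, so \eqref{eqMainBijection} is surjective for all $d$, and note that this even establishes a set-theoretic section of the map before quotienting, which will be convenient later.
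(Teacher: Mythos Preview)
Your proposal is correct and matches the paper's approach exactly: the corollary is stated as an immediate consequence of \cref{thmB0Potential}, with no further argument given beyond the observation that setting $w=\Po$ produces a superfiltered deformation $\mathcal{B}_0^{\mathrm{min}}$ with disc potential $\Po$. Your parenthetical worry about transferring the $A_\infty$-structure over a general ring $R_0$ is addressed later in the paper (\cref{lemeta,lemB0min}), where the homotopy retraction and the resulting minimal model are constructed explicitly.
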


\begin{rmk}
Dyckerhoff's formulae describe the $A_\infty$-operations $\mu_\mathrm{min}$ on $\calB_0^\mathrm{min}$ when restricted to classes in $V \cong F^1 \calB_0^\mathrm{min} \cap (\calB_0^\mathrm{min})^1$, and he states that this information does not determine the complete $A_\infty$-structure.  This is of course true in the sense that it doesn't lead easily to formulae for all operations (such formulae are given, when $R_0$ is a field of characteristic $0$, by Tu \cite[Section 3.4]{TuCategoricalSaito}), but one consequence of \cref{Theorem1} is that it \emph{does} determine the full $\infty$-equivalence class of $\calB_0^\mathrm{min}$, provided you also remember its filtration.
\end{rmk}

\begin{rmk}
There is a particular focus in the literature on the case where $w$ is a polynomial with an \emph{isolated} singularity at the origin, but we emphasise that none of our results have any such isolatedness hypothesis.  The only restriction on $w$ is that it lies in $\m^2$.
\end{rmk}

Now suppose $\calA$ is a given superfiltered $A_\infty$-deformation of $E$ with disc potential $\Po = w$.  Viewing this as an $A_\infty$-category with a single object we apply the localised mirror functor of Cho--Hong--Lau \cite{ChoHongLauLMF}.  This is a powerful tool for proving mirror symmetry and has been used with great success by these authors and others (see, e.g.~\cite{ChoHongLauNoncommutative, ChoHongLauTorus, ChoHongLauGluing}).  It is a variant of Fukaya's $A_\infty$-Yoneda embedding \cite{FukayaFloerHomologyFor3Manifold,FukayaFloerHomologyAndMirrorSymmetryII}, and some history and related constructions are discussed in the introduction to \cite{ChoHongLauLMF}.

In our application, the localised mirror functor provides an object $\mfE$ in $\mffilt(R, w)$ and an $A_\infty$-algebra homomorphism $\Phi : \calA \to \calB$, where $\calB$ is the endomorphism algebra of $\mfE$.  One could crudely describe its effect as converting the filtered $A_\infty$-problem into a filtered dg-problem in a systematic way, by repeatedly inserting $\vv$ into the $A_\infty$-operations to pull information from the higher operations down to the differential.  This unlocks the standard technique for attacking filtered complexes---spectral sequences---and we can construct a dg-algebra isomorphism $\Psi : \calB \to \calB_0$ by reducing everything to computations on the first page, which only involves the undeformed exterior algebra.

Composing our three maps we obtain an $A_\infty$-algebra homomorphism
\[
\calA \xrightarrow{\ \F\ } \calB \xrightarrow{\ \Psi\ } \calB_0 \xrightarrow{\ \Pi\ } \calB_0^\mathrm{min}
\]
between superfiltered $A_\infty$-deformations of $E$, and we show that it is actually an $\infty$-equivalence.  This proves injectivity in \cref{Theorem1} for $d=\infty$, and it is then straightforward to deduce injectivity for other $d$.  Note that, as hinted in \cref{sscMotivation} and \cref{rmkBeforeChangeOfVariables}, our proof actually provides a concrete representative of the equivalence class of $\calA$, generalising the description of superfiltered associative deformations of $E$ as Clifford algebras.  Namely
\begin{mthm}
If the disc potential $\Po$ of $\calA$ is equal to $w$ then $\calA$ is $\infty$-equivalent to $\calB_0^\mathrm{min}$.  So algebras of the form $\calB_0^\mathrm{min}$ are universal amongst superfiltered $A_\infty$-deformations of $E$.
\end{mthm}

In \cref{secObstructionDeformation} we discuss an alternative approach to proving \cref{Theorem1} using obstruction theory, which does not lead to such an explicit description of the algebras.  We also describe some related results in which the superfiltered hypothesis is replaced by other conditions.


\subsection{Hochschild cohomology}

To any $A_\infty$-algebra $\calA$ one can associate its \emph{Hochschild cohomology} $\HH^*(\calA)$, which is a graded-commutative unital associative algebra describing the self-$\Ext$s of $\calA$ as an $\calA$-$\calA$-bimodule.  It plays an important role in deformation theory, and also appears in TQFT and mirror symmetry as the closed sector of the open-string theory described by $\calA$.  In \cref{secHH} we prove the following.

\begin{mthm}[\cref{corHHA}]
\label{thmHHA}
If $\calA$ is a superfiltered $A_\infty$-deformation of $E$ with potential $\Po$ then there is a canonical isomorphism of unital $R_0$-algebras
\begin{equation}
\label{eqHHisom}
\HH^*(\calA) \cong \rH^*(\Cl(-\tfrac{1}{2}\Hess(\Po)), -\diff\Po \contract \bullet),
\end{equation}
where $\Cl$ is the Clifford algebra over $R$ on the module $V_R \coloneqq R \otimes_{R_0} V$, $\Hess(\Po)$ is the Hessian quadratic form of $\Po$ over $R$, and the differential $-\diff\Po \contract \bullet$ is contraction with $-\diff\Po$.
\end{mthm}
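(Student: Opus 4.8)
The plan is to route the computation through the matrix-factorisation model and then resolve the diagonal. Hochschild cohomology with its cup product is invariant under $A_\infty$-quasi-isomorphisms, and an $\infty$-equivalence comes with an explicit two-sided inverse, so \cref{thmCompositionEquivalence} reduces the problem to computing $\HH^*(\mathcal{B}_0)$ with $w = \Po$; the invertible quasi-isomorphism between $\mathcal{B}_0$ and its minimal model then identifies $\HH^*(\mathcal{B}_0^{\mathrm{min}})$ with $\HH^*(\mathcal{B}_0)$. So I would work with the honest dg-algebra $\mathcal{B}_0 = \eend_\mf(\mfE_0)$, the endomorphism algebra over $R$ of the stabilisation of $R_0$ in $\mf(R, \Po)$. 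In the standard Koszul model $\mfE_0$ has underlying module $\Lambda(R \otimes_{R_0} V)$ with a differential $d_0$ built from the coordinates on $V$ and a Hadamard decomposition $\Po = \sum_i x_i h_i$, so $\mathcal{B}_0$ is, as a graded $R$-algebra, the Clifford algebra of the hyperbolic form on $V \oplus V^\vee$ acting on $\Lambda V$ by wedge and contraction, equipped with the inner differential $[d_0, {}\cdot{}]$.

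I would then compute $\HH^*(\mathcal{B}_0)$ by resolving the diagonal bimodule. Because $\mfE_0$ has free underlying module, the enveloping algebra $\mathcal{B}_0 \otimes_{R_0} \mathcal{B}_0^{\mathrm{op}}$ is the endomorphism algebra over $R \otimes_{R_0} R$ of the external product $\mfE_0 \otimes_{R_0} \mfE_0^\vee$, a matrix factorisation of $\Po \otimes 1 - 1 \otimes \Po$, and under the associated Morita equivalence the diagonal bimodule $\mathcal{B}_0$ corresponds to the diagonal $R \subset R \otimes_{R_0} R$, which is resolved by the Koszul matrix factorisation $\Delta = \{x_i - y_i,\, \Delta_i\}$, where $\Delta_i$ is the usual telescoping quotient with $\Delta_i|_{y = x} = \partial_i \Po$. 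Hence $\HH^*(\mathcal{B}_0) \cong \rH^*(\eend_\mf(\Delta))$, where $\eend_\mf(\Delta)$ is the Clifford algebra of the hyperbolic form on $V \oplus V^\vee$ over $R \otimes_{R_0} R$ with inner differential $[d_\Delta, {}\cdot{}]$; on the Clifford generators $\Theta_i$ (wedging with $e_i$) and $\bar\Theta_i$ (contraction with $e_i^\vee$) one computes $[d_\Delta, \Theta_i] = \Delta_i$ and $[d_\Delta, \bar\Theta_i] = x_i - y_i$.

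The next step is to contract the $\bar\Theta$-directions. Splitting $[d_\Delta, {}\cdot{}]$ into the Koszul differential $[\sum_i (x_i - y_i)\Theta_i, {}\cdot{}]$ (which only sends $\bar\Theta_i \mapsto x_i - y_i$) and its partner $[\sum_i \Delta_i\bar\Theta_i, {}\cdot{}]$ (which only sends $\Theta_i \mapsto \Delta_i$), the first is the Koszul complex of the regular sequence $(x_i - y_i)$ cutting out the diagonal, hence a resolution of $R$; the homological perturbation lemma then transfers the whole complex to an $A_\infty$-structure on $\Lambda(R \otimes_{R_0} V)$ with $\mu^1$ the image of the partner differential, namely $-\diff\Po \contract {}\cdot{}$. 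This already pins down the underlying complex. For the cup product, the point is that cohomology classes are represented not by pure $\Theta$-monomials but by perturbed cocycles carrying correction terms in the $\bar\Theta$-directions produced by the contracting homotopy; multiplying two such representatives and normalising back through the Clifford cross-relations $\Theta_i\bar\Theta_j + \bar\Theta_j\Theta_i = \delta_{ij}$ introduces a correction to the exterior product. Computing this correction from the homotopy — it is governed by $\partial_{y_j}\Delta_i|_{y = x}$, i.e.\ the symmetrised second-order Taylor data of $\Po$ about the diagonal — identifies it with the polarisation of $-\tfrac12\Hess(\Po)$, so that the transferred $\mu^2$ induces the Clifford product for $-\tfrac12\Hess(\Po)$ on cohomology (the higher operations $\mu^{k \geq 3}$ being immaterial for the product). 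Since $-\diff\Po \contract {}\cdot{}$ is an odd derivation of any Clifford product, this yields the isomorphism \eqref{eqHHisom}. It then remains to verify unitality (the unit being $1 \in \Lambda^0$), that the isomorphism is canonical (different Hadamard decompositions, bases and homotopies give homotopy-equivalent data), and, over an arbitrary ground ring, that $-\tfrac12\Hess(\Po)$ is read as the honest integral quadratic form of \cref{rmkGeneralisesClifford}, so that no division by $2$ ever occurs.

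The main obstacle is precisely the cup product: proving that the homological-perturbation correction to $\mu^2$ is the polarisation of $-\tfrac12\Hess(\Po)$, with the correct normalisation and sign and valid over any ground ring, and that no further corrections survive. Everything else — the reduction to $\mathcal{B}_0$, the Morita/diagonal identification, the module-level computation, and the unitality and naturality checks — is standard matrix-factorisation bookkeeping.
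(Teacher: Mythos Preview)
Your route is genuinely different from the paper's. The paper does \emph{not} pass through the matrix-factorisation model at all: it works directly with the reduced Hochschild complex $\overline{CC}(\mathcal{A})$, defines an $A_\infty$-map $P_{\vv}:\overline{CC}(\mathcal{A})\to\mathcal{A}_{\vv}$ by inserting $\vv$ in all slots and projecting to length zero, and shows this is a quasi-isomorphism by a filtration argument that reduces on the associated graded to HKR for the undeformed exterior algebra. It then identifies $\rH^*(\mathcal{A}_{\vv})$ with $\rH^*(\mathcal{C})$ by embedding both into $\Cl(-\tfrac{1}{2}\Hess(\Po))\otimes_R\Jac(\Po)$. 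In particular the paper's proof is independent of \cref{thmCompositionEquivalence}; the author remarks this explicitly. What your approach buys is a conceptual link to Dyckerhoff's diagonal computation; what the paper's approach buys is that it is elementary, avoids transferring the product through homological perturbation, and gives the canonical map on the nose.

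There is, however, a real gap in your argument as written. The step ``under the associated Morita equivalence the diagonal bimodule $\mathcal{B}_0$ corresponds to the diagonal $R$'' presupposes that the functor $\Hom(\mfE_0\boxtimes\mfE_0^\vee,-)$ from $\mf(R\otimes_{R_0}R,\ \Po\otimes 1-1\otimes\Po)$ to $\mathcal{B}_0$-bimodules is an equivalence, i.e.\ that $\mfE_0\boxtimes\mfE_0^\vee$ compactly generates. By Dyckerhoff this holds when $\Po$ has isolated singular locus (and $R_0$ is a field), but the theorem is stated over an arbitrary ground ring with no hypothesis on $\Po$; indeed the paper emphasises this generality and contrasts it with Dyckerhoff's and Segal's computations. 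Without generation you only get a map $\HH^*(\mathcal{B}_0)\to\rH^*(\eend_\mf(\Delta))$, not an isomorphism, so the identification $\HH^*(\mathcal{B}_0)\cong\rH^*(\eend_\mf(\Delta))$ is unjustified. To salvage the strategy you would need either to produce an explicit semi-free $\mathcal{B}_0$-bimodule resolution of the diagonal directly (bypassing the Morita step), or to argue that the comparison map is an isomorphism by some other means---e.g.\ a filtration argument of the same flavour as the paper's. The product computation you flag as ``the main obstacle'' is secondary to this.
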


\begin{rmk}
We explain in \cref{sscIdentifyingHH} why $\tfrac{1}{2}\Hess(\Po)$ makes sense, even if $2$ is not invertible.
\end{rmk}

When $\Po = 0$ this reduces to the standard Hochschild--Kostant--Rosenberg (HKR) isomorphism \cite{HochschildKostantRosenberg} for the exterior algebra.  Turning on $\Po$ introduces an extra differential and deforms the product.

\begin{rmk}
Since Hochschild cohomology is graded-commutative, \cref{thmHHisom} implies that the right-hand side of \eqref{eqHHisom} is also graded-commutative, which is not obvious.  In \cref{thmHAv,propCentre} we realise it as a subalgebra of the centre of $\Cl(-\tfrac{1}{2}\Hess(\Po))\otimes_R \Jac(\Po)$, where $\Jac(\Po)$ is the Jacobian algebra $R/(\partial_i \Po)$.
\end{rmk}

Previous results in this direction have focused on the Hochschild cohomology of the category $\mf(R, \Po)$: we know that $\calA$ is $\infty$-equivalent to the endomorphism algebra of the matrix factorisation $\mfE_0$, so if $\mfE_0$ split-generates the category then we have $\HH^*(\mf(R, \Po)) \cong \HH^*(\calA)$ since $\HH^*$ is Morita-invariant.  In particular, Dyckerhoff \cite[Section 6.2]{Dyckerhoff} assumed that $\Po$ has isolated critical locus and computed $\HH^*(\mf(R, \Po)) \cong \Jac(\Po)$ by identifying $\HH^*$ with endomorphisms of the diagonal matrix factorisation in $\mf(R\otimes_{R_0} R, -\Po \otimes 1 + 1 \otimes \Po)$.  Segal \cite{SegalLGBModels} and C\u{a}ld\u{a}raru--Tu \cite{CaldararuTu} instead computed Hochschild (co)homology of $\mf(R, \Po)$ by viewing matrix factorisations as modules over the curved algebra $(R, \Po)$ and calculating certain Hochschild invariants for the latter.  They again obtained $\HH^*(\mf(R, \Po)) \cong \Jac(\Po)$ when the critical locus is isolated, and Segal suggested that in the non-isolated case the Hochschild homology should be
\[
\rH_*(R \otimes_{R_0} \Lambda V^\vee, -\diff\Po \wedge \bullet).
\]
Our complex $(\Cl(-\tfrac{1}{2}\Hess(\Po)), -\diff\Po \contract \bullet)$ is dual to Segal's complex $(R \otimes_{R_0} \Lambda V^\vee, -\diff\Po \wedge \bullet)$, non-canonically, so \cref{thmHHA} is consistent with his prediction.  However, since Hochschild homology has no product, the Hessian does not appear in his statement.


\subsection{Structure of the paper}

\Cref{secSurj} sets up the main algebraic objects---namely, filtered matrix factorisations---describes the factorisation $\mfE_0$, and then studies its endomorphism algebra $\calB_0$.  We construct the minimal model $\calB_0^\mathrm{min}$ and calculate its disc potential (\cref{thmB0Potential}).  In \cref{secmfE} we study the localised mirror functor in this setting, show that it gives a description of an arbitrary superfiltered deformation $\calA$ of $E$, and deduce our main classification result (\cref{Theorem1}).  We digress briefly in \cref{secObstructionDeformation} to discuss an alternative approach and some related results.  \Cref{secHH} then computes the Hochschild cohomology algebra of $\calA$ (\cref{thmHHA})---this doesn't use the earlier classification results, and can be read independently if desired.

Finally, in \cref{secMonotoneTori} we discuss the mirror symmetry picture outlined in \cref{sscMotivation}.  We use a geometric version of the localised mirror functor (also introduced by Cho--Hong--Lau, in \cite{ChoHongLauTorus}) to prove a local mirror symmetry result for monotone Lagrangian tori: the Floer $A_\infty$-algebra of such a torus is $1$-equivalent to a minimal model for the endomorphism algebra of the expected mirror matrix factorisation (\cref{thmLocalMS}).  We also deduce the folklore result that its disc potential is a suitable expansion of the superpotential $W_L$ (\cref{thmPoComputation}).  As mentioned above, only this last section assumes any knowledge of geometry.


\subsection{Acknowledgements}

I am grateful to Jonny Evans and Ed Segal for valuable feedback on an earlier draft, and more generally to Jonny, Ed and Yank\i~Lekili for many inspiring conversations over the last two years.  I am also indebted to the anonymous referees for a large number of helpful suggestions.  This paper was mostly written during a stay at The Fields Institute, as part of the Thematic Program on Homological Algebra of Mirror Symmetry, and I thank the Institute for financial support and for its hospitality and excellent working environment.  I am funded by EPSRC grant [EP/P02095X/2] and St John's College, Cambridge.


\section{The matrix factorisation $\mfE_0$}
\label{secSurj}

In this section we prove a spectral sequence lemma, review matrix factorisations, construct the object $\mfE_0$ from \cref{sscOutline}, and study its endomorphism algebra $\calB_0$ and the minimal model $\calB_0^\mathrm{min}$.  The section culminates with the computation of the disc potential of $\calB_0^\mathrm{min}$, proving \cref{thmB0Potential} and hence surjectivity of \eqref{eqMainBijection}.  Apart from our focus on filtrations, this is largely standard.


\subsection{A preliminary lemma}
\label{sscSetup}

In \cref{sscFilteredDeformations} we introduced the notions of superfiltered modules, algebras, and complexes.  Recall that a superfiltered complex is not a filtered complex in the usual sense since the differential maps one level up the filtration.  We would like to rectify this so we can use spectral sequences, and we do it as follows.

\begin{defn}
\label{defTcomplex}
Given a superfiltered complex $(C, \diff_C)$ over $R_0$, whose superfiltration is induced by a $\Z$-grading $C^{(n)}$, decompose the differential as $\diff_1 + \diff_{-1} + \diff_{-3} + \dots$, where $\diff_i$ has degree $i$ with respect to the $\Z$-grading.  Then define a new complex $(C^T, \diff_{C^T})$ by setting
\[
C^T = R_0[T^{\pm 1}] \otimes C \quad \text{and} \quad \diff_{C^T} = \diff_1 + T\diff_{-1} + T^2\diff_{-3} + \dots,
\]
extended $R_0[T^{\pm 1}]$-linearly; undecorated tensor products are always implictly taken over $R_0$.  This is $\Z$-graded, by placing $T^m \otimes C^{(n)}$ in degree $2m+n$, and is filtered in the usual sense by defining
\[
F^p C^T = T^p R_0[T] \otimes C.
\]
The induced spectral sequence has zeroth page $(C^T, \diff_1)$, with $T^m \otimes C^{(n)}$ lying in the $m$th column and $(m+n)$th row, and first page $R_0[T^{\pm 1}] \otimes \rH^*(C, \diff_1)$.  If $C^{(*)}$ is bounded in degree then the filtration on $C^T$ is finite in each degree, and the spectral sequence converges to $\rH^*(C^T, \diff_{C^T})$.
\end{defn}

If $C$ and $D$ are superfiltered complexes with superfiltrations induced by $\Z$-gradings, and $f$ is a superfiltered chain map $C \to D$ (of degree $0$), then there is an induced map $f^T : C^T \to D^T$ of $\Z$-graded filtered complexes, and hence an induced map on spectral sequences.  Later we shall use

\begin{lem}
\label{lemSSisom}
If both $C^{(*)}$ and $D^{(*)}$ are bounded in degree, and $f^T$ induces an isomorphism on some page of the spectral sequence, then $f$ is a quasi-isomorphism.
\end{lem}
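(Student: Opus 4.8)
The plan is in two movements. First, I would lift the situation to the $T$-complexes, where the differential stays within each filtration level and genuine (bounded, convergent) spectral sequences are available, and invoke the standard fact that a morphism of convergent spectral sequences which is an isomorphism on one page is an isomorphism on the abutments. Second, I would descend from $\rH^*(C^T)$ back to $\rH^*(C)$ — and likewise from $\rH^*(D^T)$ to $\rH^*(D)$ — by a ``set $T=1$'' short exact sequence, so that a five-lemma argument transfers the conclusion from $f^T$ to $f$.

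For the first movement: the hypothesis that $f$ is a superfiltered chain map of degree $0$ is exactly what makes $f^T : C^T \to D^T$ a morphism of $\Z$-graded filtered complexes, hence of the associated spectral sequences. Because $C^{(*)}$ and $D^{(*)}$ are bounded in degree, the filtrations $F^\bullet C^T$, $F^\bullet D^T$ are finite in each degree, so (as recorded in \cref{defTcomplex}) the spectral sequences converge to $\rH^*(C^T,\diff_{C^T})$ and $\rH^*(D^T,\diff_{D^T})$. Naturality of the page differentials propagates an isomorphism on $E_r$ forward to every later page and to $E_\infty$, and the comparison theorem for bounded convergent spectral sequences then upgrades this to the statement that $f^T$ is a quasi-isomorphism.

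For the second movement — the step carrying the actual content, though it too is elementary — I would consider multiplication by $T-1$ on $C^T$. Since $\diff_{C^T}$ is $R_0[T^{\pm1}]$-linear this is a chain map, of degree $0$ mod $2$ once we remember only the $\Z/2$-grading on $C^T$; it is injective, and an element $\sum_m T^m\otimes c_m$ lies in its image iff $\sum_m c_m = 0$, so its cokernel is identified with $(C,\diff_C)$ via the ``$T=1$'' map $\pi : C^T \to C$. This gives a short exact sequence of $\Z/2$-graded complexes $0 \to C^T \xrightarrow{\,T-1\,} C^T \xrightarrow{\,\pi\,} C \to 0$, and the analogous one for $D$; since $f^T$ is $R_0[T^{\pm1}]$-linear and specialises to $f$ at $T=1$, the pair $(f^T,f)$ is a morphism between these two sequences. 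Passing to the long exact cohomology sequences, $f^T_*$ is an isomorphism in the four spots neighbouring $\rH^*(C)\to\rH^*(D)$, so the five lemma gives that $f_*:\rH^*(C,\diff_C)\to\rH^*(D,\diff_D)$ is an isomorphism, i.e.\ $f$ is a quasi-isomorphism. I do not anticipate a genuine obstacle: the whole argument is the spectral-sequence comparison theorem combined with a Koszul-type exact sequence in $T$. The only points needing care are the bookkeeping between the $\Z$-grading on the $T$-complexes and the $\Z/2$-grading on $C$ and $D$ (so that $\pi$, which shifts the $\Z$-grading, is a legitimate chain map of $\Z/2$-graded complexes), and the observation that the boundedness hypothesis is used solely to guarantee convergence of the spectral sequences — the $T-1$ short exact sequence is exact over any $R_0$ regardless.
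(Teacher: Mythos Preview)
Your proposal is correct. The first movement coincides with the paper's: both use boundedness to get convergence, then the comparison theorem to conclude that $f^T$ is a quasi-isomorphism.

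The second movement differs. The paper writes down explicit $R_0$-linear sections $i_\mathrm{even} : C^\mathrm{even} \to (C^T)^{(0)}$ and $i_\mathrm{odd} : C^\mathrm{odd} \to (C^T)^{(1)}$ of the ``set $T=1$'' map, checks that they intertwine the differentials up to a factor of $T$, and uses this to identify $\rH^{\mathrm{even}}(C)$ and $\rH^{\mathrm{odd}}(C)$ directly with $\rH^0(C^T)$ and $\rH^1(C^T)$; the conclusion for $f$ then follows immediately from that for $f^T$. Your route via the short exact sequence $0 \to C^T \xrightarrow{T-1} C^T \to C \to 0$ and the five lemma is the more conceptual localisation argument and avoids writing down the sections, at the price of introducing a long exact sequence. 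Both are short; yours is arguably the cleaner packaging, while the paper's makes the identification $\rH^{*}(C) \cong \rH^{\{0,1\}}(C^T)$ completely explicit.
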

\begin{proof}
Boundedness of $C^{(*)}$ and $D^{(*)}$ implies that the spectral sequences converge, and hence $f^T$ induces an isomorphism $\gr \rH^*(C^T) \to \gr \rH^*(D^T)$ between the limit pages.  Degreewise-finiteness of the filtration then means that $f^T$ is a quasi-isomorphism.  It is therefore enough to show that for each $j \in \Z$ there are isomorphisms $\rH^j(C) \to \rH^j(C^T)$ and $\rH^j(D) \to \rH^j(D^T)$ which intertwines the action of $f$ and $f^T$.  (Note the grading on $\rH^*(C)$ is mod $2$.)

The required isomorphism $\rH^j(C) \to \rH^j(C^T)$ can be described as being induced by the map
\[
i_j : (a_n) \in \bigoplus_{n \in 2\Z+j} C^{(n)} \mapsto \sum_{n \in 2\Z+j} T^{-\frac{(n-j)}{2}}a_n.
\]
This is a sort of chain map, in that it satisfies $d_{C^T} \circ i_j = i_{j+1} \circ d_C$, and this is enough for it to induce a map on cohomology.  To see that the map on cohomology is an isomorphism, observe that it is inverted by the chain map $C^T \to C$ given by setting $T=1$.  The construction for $D$ is analogous.
\end{proof}


\subsection{Filtered matrix factorisations}
\label{sscMatrixFactorisations}

We next collect the basic concepts of matrix factorisations; see Dyckerhoff \cite{Dyckerhoff} (whose treatment we follow), or the originating paper of Eisenbud \cite{EisenbudCompleteIntersection}, for a much fuller discussion.  We also define the obvious filtered modifications.

\begin{defn}
Given a ring $R$ and an element $w \in R$, a \emph{matrix factorisation} of $w$ over $R$ comprises a $\Z/2$-graded $R$-module $X = X^0 \oplus X^1$, which is finitely-generated and projective in each degree, equipped with an $R$-linear endomorphism $\diff$ of degree $1$ such that $\diff^2 = w\id_X$.  These form the objects of a $\Z/2$-graded dg-category $\mf(R, w)$ over $R$, in which $\hom^i (X, X')$ comprises $R$-linear maps $X \to X'$ of degree $i$, with differential
\[
\diff f = \diff_{X'} \circ f - (-1)^{|f|} f \circ \diff_X.
\]
Composition is defined in the obvious way.
\end{defn}

\begin{rmk}
Unfortunately there seems to be no more standard name for the endomorphism $\diff$ of $X$ than the generic term \emph{twisted differential}.  We propose (without hope or expectation that it will catch on) the name \emph{squifferential}, both because it is suggestive of `a squiffy differential', and because it is the \emph{squ}are of $\diff$ which is equal to the element $w$ that we're factorising.
\end{rmk}

Next we introduce the filtered versions we need.

\begin{defn}
A \emph{filtered matrix factorisation} is a matrix factorisation $(X, \diff)$ such that $X$ is superfiltered, and $\diff$ is superfiltered of degree $1$.  These form a superfiltered dg-category over $R$, meaning that morphism spaces are superfiltered complexes and composition is superfiltered of degree $0$, which we denote by $\mffilt(R, w)$.
\end{defn}


\subsection{Defining $\mfE_0$}
\label{sscDefE0}

Now restrict to the case where $R$ is the ring of formal functions on $V$ as in \cref{secIntroduction}, and $w$ lies in the ideal $\m^2$.  Recall that we denote by $v_1, \dots, v_n$ a basis for $V$, and by $x_1, \dots, x_n$ the dual coordinates on $V$, so that $R = R_0 \llbracket x_1, \dots, x_n \rrbracket$ and $\m = (x_1, \dots, x_n)$.  Since $w$ is in $\m^2$ we can pick $w_i \in \m$ such that $w = \sum_i x_iw_i$.  In other words, $w = \ww \contract \vv$, where $\ww = \sum_i w_i v_i^\vee \in \m \otimes V^\vee$, $\vv = \sum_i x_i v_i$, and $\contract$ denotes (the $R$-bilinear extension of) contraction between $V^\vee$ and $\Lambda V$.  Here $v_1^\vee, \dots, v_n^\vee$ is the dual basis for $V^\vee$.  Note that $x_i$ and $v_i^\vee$ are in some sense the same thing, but we think of the former as an element of $R$ and the latter as an element of $V^\vee$.

We define the filtered matrix factorisation $\mfE_0$ as follows (this is simply the stabilisation of $R_0$ in the sense of \cite[Section 2.3]{Dyckerhoff}).  Take the underlying module to be $E_R \coloneqq R \otimes E$, where $E = \Lambda V$ as usual.  Equivalently we could set $E_R = \Lambda V_R$, where $V_R \coloneqq R \otimes V$ and the exterior algebra is taken over $R$.  We equip this module with the superfiltration induced by its $\Z$-grading, and with the squifferential
\[
\diff_{\mfE_0} : a \mapsto -(\vv \wedge a + \ww \contract a).
\]
This is indeed superfiltered of degree $1$ and squares to $w \id_{E_R}$.  The reason for the overall minus sign is to make the leading term agree with that of the matrix factorisation $\mfE$ that we define later.

Let $\calB_0$ denote the superfiltered dg-algebra $\eend_{\mffilt}(\mfE_0)$.  Its underlying $R$-algebra is $\End_R(E_R)$, and the differential is graded-commutator with $\diff_{\mfE_0}$, i.e.
\[
\diff_{\calB_0}f = [\diff_{\mfE_0}, f].
\]
We can view it as an $A_\infty$-algebra via \eqref{eqdgAinfinity}, and then
\[
\mu^1_{\calB_0}(f) = -[f, \diff_{\mfE_0}].
\]


\subsection{The algebra $E_\mathrm{dg}$}
\label{sscEdg}

Before studying $\calB_0$ we focus on its leading order part.  We denote this dg-algebra by $E_\mathrm{dg}$ since it will turn out to be canonically quasi-isomorphic to $E$.  Precisely:

\begin{defn}
$E_\mathrm{dg}$ is the $\Z$-graded dg-algebra over $R$ with underlying $R$-algebra $\End_R(E_R)$ and with differential
\[
\diff_{E_\mathrm{dg}}f = [-\vv \wedge \bullet, f] \quad \text{or equivalently} \quad \mu^1_{E_\mathrm{dg}}(f) = [f, \vv \wedge \bullet].\qedhere
\]
\end{defn}

Given a finite rank free module $W$ over a ring $S$, it is well-known that $\End_S(\Lambda W)$ is isomorphic as an algebra to the Clifford algebra $\Cl (W \oplus W^\vee, Q_\mathrm{taut})$, where $Q_\mathrm{taut}$ is the tautological quadratic form defined by
\[
Q_\mathrm{taut}(w+\theta) = \theta(w)
\]
for $w \in W$ and $\theta \in W^\vee$.  Concretely, it is generated by endomorphisms of the form `wedge with $w$' and `contract with $\theta$', whose graded-commutator is $\theta(w) \id_{\Lambda W}$.  This Clifford algebra can be canonically identified with $(\Lambda W) \otimes (\Lambda W^\vee)$ by moving all of the $W$ terms to the left and $W^\vee$ terms to the right, and as such it carries a bigrading in which the piece of bidgree $(q, r)$ is $(\Lambda^q W) \otimes (\Lambda^r W^\vee)$.  However, the bigrading is not respected by the product.  The overall grading is given by $q-r$.

\begin{rmk}
In the work of Dyckerhoff \cite{Dyckerhoff} and Sheridan \cite[Section 7]{SheridanCY} this Clifford algebra appears as an algebra of differential operators in odd supercommuting variables.
\end{rmk}

Applying this in the case $S=R$ and $W=V_R$ we obtain a canonical identification of $E_\mathrm{dg}$ with $\Cl(V_R \oplus V_R^\vee, Q_\mathrm{taut})$, which we shall use freely from now on.  Under this identification our expression for $\mu^1_{E_\mathrm{dg}}$ simplifies to
\[
\mu^1_{E_\mathrm{dg}}(f) = [f, \vv].
\]
Although the product does not respect the bigrading, $\mu^1_{E_\mathrm{dg}}$ does, and has bidegree $(0, -1)$.  We in fact have a \emph{trigrading} in this case, in which the piece of tridegree $(p, q, r)$ is $R_p \otimes (\Lambda^q V) \otimes (\Lambda^r V^\vee)$, and with respect to this $\mu^{1}_{E_\mathrm{dg}}$ has tridegree $(1, 0, -1)$.  Here $R_p$ denotes the space of homogeneous polynomials of total degree $p$ in the $x_i$; note that this is consistent with the existing meaning of $R_0$.

There is an obvious $R_0$-algebra map $\iota_0 : E \to E_\mathrm{dg}$, of degree $0$, which simply includes $E^*$ as the piece of tridegree $(0, *, 0)$.  There is also an obvious map $\pi : E_\mathrm{dg} \to E$, which projects out the other trigraded pieces.  Another way to think of $\pi$ is that it sends $f \in \End_R(E_R)$ to the reduction of $f(1)$ modulo $\m$.  We then have a splitting
\[
E_\mathrm{dg} = \im \iota_0 \oplus \ker \pi,
\]
and the differential vanishes on the first summand and preserves the second.  The main result of this subsection is

\begin{lem}
\label{lemetam1}
The map $\iota_0$ induces an $R_0$-algebra isomorphism $E \to \rH^*(E_\mathrm{dg})$.  Moreover, the complex $\ker \pi$ is contractible, i.e.~there exists an $R_0$-linear chain homotopy
\[
\eta_{-1} : \ker \pi \to \ker \pi
\]
of degree $-1$ satisfying
\[
\mu^1_{E_\mathrm{dg}} \circ \eta_{-1} + \eta_{-1} \circ \mu^1_{E_\mathrm{dg}} = -\id_{\ker \pi}.
\]
(The reason for the minus sign on the right-hand side will become clear in \cref{lemeta}.)
\end{lem}
\begin{proof}
We can think of $E_\mathrm{dg}$ as the endomorphism algebra of the Koszul complex $(E_R, -\vv \wedge \bullet)$ resolving $E^n_R/\m \cong \Lambda^n V \cong R_0[-n]$ as an $R$-module, where $E^n_R$ denotes the degree $n$ part of $E_R$.  Therefore $\rH^*(E_{\mathrm{dg}})$ is the $\Ext_R$-algebra of $R_0[-n]$, or equivalently of $R_0$.  By computing this in terms of maps $(E_R, -\vv \wedge \bullet) \to R_0[-n]$, we see that it is isomorphic to $E$ itself as a $\Z$-graded $R$-algebra, with $e \in E$ acting on $(E_R, -\vv \wedge \bullet)$ as $x \mapsto e \wedge x$.  Hence $\iota_0$ induces an isomorphism on cohomology, and it remains to show $\ker \pi$ is contractible.

Recall that a bounded complex of projective modules is contractible if and only if it's acyclic.  By the previous paragraph we know that $\ker \pi$ is acyclic, and it's obviously bounded.  However, its graded pieces or not in general projective over $R_0$ (apart from the trivial case where the module $V$ is zero) since they involve countably infinite products of $R_0$.  To rectify this, we split $\ker \pi$ into free subcomplexes as follows.

For $s, q \geq 0$, let $K^{s, q}$ be the sum of the pieces of tridegree $(p, q, r)$ where $p+r = s$, i.e.
\[
K^{s, q} = \bigoplus_{\substack{p, r \\ p+r = s}} R_p \otimes (\Lambda^q V) \otimes (\Lambda^r V^\vee).
\]
We then have
\[
E_\mathrm{dg} = \prod_{s=0}^\infty \bigoplus_{q=0}^n K^{s, q} \quad \text{and} \quad \ker \pi = \prod_{s=1}^\infty \bigoplus_{q=0}^n K^{s,q}
\]
(note the different $s$ limits), and since $\mu^1_{E_\mathrm{dg}}$ has tridegree $(1, 0, -1)$ it respects these splittings.  Each $K^{s,q}$ is a bounded complex of free $R_0$-modules, and is acyclic for $s \geq 1$ because $\ker \pi$ is.  Thus each $K^{s,q}$ for $s \geq 1$ is contractible, and combining the individual homotopies gives the required $\eta_{-1}$.
\end{proof}


\subsection{The algebra $\calB_0$}
\label{sscEndmfE0}

We now return to studying $\calB_0 = \eend_{\mffilt}(\mfE_0)$.  As with $E_\mathrm{dg}$ the underlying algebra is naturally identified with $\Cl(V_R \oplus V_R^\vee, Q_\mathrm{taut})$, but now we have
\[
\mu^1_{\calB_0}(f) = [f, \vv + \ww].
\]
Recall that $\ww = \sum_i w_i v_i^\vee$, where the $w_i \in \m$ were chosen so that $\sum_i x_i w_i = w$.  Beware that this new $\mu^1$ does not respect the bigrading: the $\vv$ and $\ww$ terms have bidegree $(0, -1)$ and $(-1, 0)$ respectively.

Our goal in this subsection is to show that the cohomology $\rH^*(\calB_0)$ is isomorphic to the exterior algebra $E$ as a superfiltered module (canonically at the associated graded level), and verify the hypotheses that allow us to transfer the dg-structure to an $A_\infty$-structure on $E$ to give our minimal model $\calB_0^\mathrm{min}$.  We will do this by modifying the maps $\iota_0$ and $\eta_{-1}$ from \cref{sscEdg} to give a chain map $\iota : E \to \calB_0$ and a nullhomotopy $\eta : \ker \pi \to \ker \pi$ of $\ker \pi$ equipped with $\mu^1_{\calB_0}$.  (Note that $\ker \pi$ is automatically a subcomplex of $\calB_0$, because $\mu^1_{\calB_0}$ vanishes modulo $\m$, i.e.~lands in the pieces of tridegree $(p, q, r)$ satisfying $p \geq 1$.)

\begin{rmk}
Although $\calB_0$ is defined over $R$, and the isomorphism $\rH^*(\calB_0) \cong E$ holds as $R$-modules, in order to transfer the $A_\infty$-structure from $\calB_0$ to $E$ we will have to work over $R_0$ instead.  This is not a problem because ultimately it's $A_\infty$-algebras over $R_0$ that we care about.
\end{rmk}

The first task is to define $\iota$ on $E^1 = V$, i.e.~to correct the $v_i$ in $\calB_0$ to cocycles.  We can do this explicitly by hand, as follows.  Since each $w_i$ lies in the ideal $\m$ we have
\[
w_i = -\sum_j \lambda_{ij} x_j
\]
for some $\lambda_{ij}$ in $R$.

\begin{lem}
For each $i$, the element $f_i$ in $\calB_0$ given by $v_i + \sum_j \lambda_{ij}v_j^\vee$ is a cocycle.
\end{lem}
\begin{proof}
For each $i$ we have
\[
\mu^1_{\calB_0}(f_i) = \big[v_i + \sum_j \lambda_{ij}v_j^\vee, \vv + \ww\big] = [v_i, \ww] + \sum_j \lambda_{ij}[v_j^\vee, \vv] = w_i + \sum_j \lambda_{ij}x_j,
\]
which vanishes by construction.
\end{proof}

We now define the full map $\iota : E \to \calB_0$.  One natural thing to try is to send each basis element $v_{i_1} \wedge \dots \wedge v_{i_k}$ of $E$, with $i_1 < \dots < i_k$, to the product $f_{i_1} \dots f_{i_k}$ in $\calB_0$.  This is indeed a chain map, because $f_{i_1} \dots f_{i_k}$ is a cocycle by the Leibniz rule, and it's superfiltered of degree $0$.  It does not necessarily satisfy $\pi \circ \iota = \id_E$ though, which will be useful later.  However, it is straightforward to fix this by projecting onto appropriate bigraded pieces, as we now explain.

Each $f_i$ splits into pieces of bidegree $(1, 0)$ and $(0, 1)$, namely $v_i$ and $\sum_j \lambda_{ij} v_j^\vee$ respectively, so $f_{i_1} \dots f_{i_k}$ may obviously have contributions in bidegree $(k, 0), (k-1, 1), \dots, (0, k)$, i.e.~in bidegree $(k-a, a)$ for $a=0, \dots, k$.  Less obviously, it may also have contributions in bidegree $(k-a-b, a-b)$ for $b > 0$, coming from the Clifford relations in $\Cl(V_R \oplus V_R, Q_\mathrm{taut})$.  Let $\iota(v_{i_1} \wedge \dots \wedge v_{i_k})$ be the result of projecting out these non-obvious contributions.  In other words, take $f_{i_1} \dots f_{i_k}$ and project onto the pieces whose bidegree $(q,r)$ satisfies $q+r = k$.  This is still a cocycle because $\mu^1_{\calB_0}$ decreases $q+r$ by $1$, so an element of $\calB_0$ is a cocycle if and only if each of its pieces of fixed $q+r$ is also a cocycle.

The resulting map $\iota$ is then an $R_0$-linear chain map, which is superfiltered of degree $0$.  Moreover, its leading term corresponds to the bidegree $(k, 0)$ part in the previous paragraph, which is precisely $\iota_0$.  Beware, however, that $\iota$ is not in general an algebra map since the $f_i$ need not graded-commute.

\begin{lem}
\label{lemiotaqis}
The map $\iota : (E, 0) \to (\calB_0, \mu^1_{\calB_0})$ is a quasi-isomorphism of chain complexes over $R_0$.
\end{lem}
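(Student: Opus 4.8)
The plan is to prove that $\iota$ is a quasi-isomorphism by comparing the cohomology of $(\mathcal{B}_0, \mu^1_{\mathcal{B}_0})$ with that of the undeformed (i.e.~$\ww = 0$) situation via the spectral sequence machinery of \cref{defTcomplex}. First I would observe that $\mathcal{B}_0 = \End_R(E_R)$ is a superfiltered complex whose superfiltration is induced by a $\Z$-grading: write $\End_R(E_R) = \End_{R_0}(E) \otimes_{R_0} R$ (using that $E_R = E \otimes_{R_0} R$ and everything is $R$-linear) and give an $R$-linear endomorphism the degree $q - p$ if it sends $E^{(p)}$ into $E^{(q)}$; since $E$ is concentrated in degrees $0, \dots, n$ this $\Z$-grading is bounded, so \cref{lemSSisom} applies. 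The differential $\mu^1_{\mathcal{B}_0}$ decomposes as $\diff_1 + \diff_{-1}$, where $\diff_1$ is the part coming from the wedge term $-\vv \wedge \bullet$ of $\diff_{\mfE_0}$ (which raises $\Z$-degree by $1$) and $\diff_{-1}$ is the part coming from the contraction term $-\ww \contract \bullet$ (which, because $\ww \in \m \otimes V^\vee$, lowers $\Z$-degree by $1$ but also carries a factor in $\m$; there are no lower terms). Passing to the $T$-complex $\mathcal{B}_0^T$ and its spectral sequence, the zeroth page differential is $\diff_1$, which is exactly the Koszul differential $-\vv \wedge \bullet$ on $E_R$ postcomposed/precomposed in the endomorphism complex — that is, it is the differential one gets from the \emph{undeformed} stabilisation of $R_0$ (the $\ww = 0$ case).

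The key computation is therefore the first page: $\rH^*(\mathcal{B}_0^T, \diff_1)$. This is the cohomology of the endomorphism dg-algebra of the Koszul complex $(E_R, -\vv \wedge \bullet)$, tensored with $R_0[T^{\pm 1}]$. Since the Koszul complex resolves $R_0$ over $R$, its endomorphism algebra computes $\Ext_R^*(R_0, R_0) = E = \Lambda V$, concentrated in the appropriate $\Z/2$-degrees; one checks directly — e.g.~using the explicit Koszul resolution and the standard identification of its self-$\Ext$ with the exterior algebra on $V$ — that the cocycles $f_i$ reduce, modulo $\m$ and after setting $\ww = 0$, precisely to the classes $v_i \wedge \bullet$ that generate this $\Ext$-algebra, and hence that $\iota^T$ (the $T$-linear extension of $\iota$) induces a map on zeroth pages which is a quasi-isomorphism — equivalently, an isomorphism on the first page $R_0[T^{\pm 1}] \otimes \rH^*(\mathcal{B}_0, \diff_1) \cong R_0[T^{\pm 1}] \otimes E$. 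Here I would use that $(E, 0)$, viewed as a superfiltered complex with zero differential, has $E^T = R_0[T^{\pm 1}] \otimes E$ with zero differential, so its spectral sequence is trivial and $\iota^T$ respects the filtrations by construction ($\iota$ is superfiltered of degree $0$, sending $\Lambda^r V$ to things of filtration level $r$, with leading term the inclusion of $\Lambda^r V$).

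By \cref{lemSSisom}, once $\iota^T$ is an isomorphism on the first page of the spectral sequence it follows that $\iota$ itself is a quasi-isomorphism, which is the claim. I expect the main obstacle to be the first-page identification: one must verify carefully, over an arbitrary ground ring $R_0$, that the self-$\Ext$ of the Koszul resolution is exactly $\Lambda V$ with no higher-filtration contributions and no surprises in mixed characteristic, and that the $f_i$ — not just $v_i \wedge \bullet$ — represent the standard generators once one mods out by $\m$ (the correction term $\sum_j \lambda_{ij} v_j^\vee \contract \bullet$ lives in $\m \cdot \End_{R_0}(E)$, hence vanishes on the zeroth page, so this is really a statement about the undeformed complex, but it still needs to be said). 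The rest — boundedness of the $\Z$-grading, degreewise finiteness of the filtration on the $T$-complex, and the bookkeeping that $\iota^T$ is a filtered chain map — is routine given the setup already in place.
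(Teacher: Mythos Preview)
Your proposal is correct and follows essentially the same route as the paper: invoke \cref{lemSSisom}, identify the leading-term complex $(\mathcal{B}_0,\diff_1)$ as the endomorphism dg-algebra $E_{\mathrm{dg}}$ of the Koszul resolution of $R_0$, and check that $\gr\iota$ realises the standard isomorphism $E \cong \Ext^*_R(R_0,R_0)$ given by $e \mapsto e \wedge \bullet$.

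One small slip worth flagging: you say the correction term $\sum_j \lambda_{ij} v_j^\vee \contract \bullet$ lies in $\m \cdot \End_{R_0}(E)$ and hence vanishes on the zeroth page. This is not the right reason (and need not be literally true---the $\lambda_{ij}$ can have nonzero constant term). The filtration driving the spectral sequence is the $\Z$-degree filtration on $\End_R(E_R)$, not the $\m$-adic one; the correction term drops out of the leading part of $f_i$ simply because $v_j^\vee \contract \bullet$ has $\Z$-degree $-1$, two below the leading $v_i \wedge \bullet$. Your conclusion is unaffected.
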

\begin{proof}
We already saw that $\iota$ is a chain map.  It remains to check that it induces an isomorphism on cohomology, and by \cref{lemSSisom} it suffices to show that $\iota^T$ induces an isomorphism on some page of the associated spectral sequences for $E^T$ and $\calB_0^T$.  It actually induces an isomorphism on the \emph{first} page, because the leading term $\iota_0$ gives a quasi-isomorphism from $E$ to $E_\mathrm{dg}$.
\end{proof}

To transfer the dg-structure on $\calB_0$ to an $A_\infty$-structure on $E$, to give our minimal model $\calB_0^\mathrm{min}$, we use Markl's construction from \cite{Markl}.  For this we need to show that we are in his `Situation 1'.  Concretely this means we need chain maps $\iota : E \to \calB_0$ and $\pi : \calB_0 \to E$ and a chain homotopy from $\id_{\calB_0}$ to $\iota \circ \pi$.  We already have the chain maps $\iota$ and $\pi$ (in fact our $\pi$ is precisely the $\Psi$ of \cite[Lemma 7.19]{ChoHongLauLMF}), which are superfiltered of degree $0$, and which additionally satisfy $\pi \circ \iota = \id_E$.  The main result of this subsection is

\begin{lem}
\label{lemeta}
There exists an $R_0$-linear chain homotopy $\eta$ satisfying
\begin{equation}
\label{eqetaCondition}
\mu^1_{\calB_0} \circ \eta + \eta \circ \mu^1_{\calB_0} = \iota \circ \pi - \id_{\calB_0}.
\end{equation}
Moreover $\eta$ can be chosen to be superfiltered of degree $-1$, and to satisfy the \emph{side conditions} $\eta \circ \iota = 0$, $\pi \circ \eta = 0$, and $\eta^2 = 0$.
\end{lem}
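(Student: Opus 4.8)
My plan is to dispatch the first two claims by direct computation, then reduce the existence of $\eta$ (with all the side conditions and the filtration control) to producing one suitable contracting homotopy on a single acyclic complex, and finally build that homotopy by a perturbation argument off the associated graded.

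The first two assertions are quick. Since $\vv$ and $\ww$ have coefficients in $\m$, the squifferential $\diff_{\mfE_0}$ carries $E_R$ into $\m E_R$, and $\diff_{\mfE_0}(1) = -\vv$ lies in $\m E_R$ too; hence $\mu^1_{\mathcal{B}_0}(f)$ applied to $1$ lands in $\m E_R$ for every $f$, so $\pi \circ \mu^1_{\mathcal{B}_0} = 0$ and $\pi$ is a chain map to $(E, 0)$. The equality $\pi \circ \iota = \id_E$ is then checked by evaluating $f_{i_1} \circ \dots \circ f_{i_r}$ on $1 \in E_R$ and reducing modulo $\m$.

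Next I would use these facts to reduce the construction of $\eta$ to a statement about a single complex. As $\pi \circ \iota = \id_E$, the endomorphism $P \coloneqq \iota \circ \pi$ of $\mathcal{B}_0$ is an idempotent chain map, superfiltered of degree $0$, so it gives a direct sum decomposition $\mathcal{B}_0 = \iota(E) \oplus C$ into superfiltered subcomplexes, where $C \coloneqq \ker P$. The differential vanishes on $\iota(E)$ (the $f_i$, hence all their composites, are cocycles), and by \cref{lemiotaqis} the inclusion $\iota(E) \hookrightarrow \mathcal{B}_0$ is a quasi-isomorphism, so $C$ is acyclic. It therefore suffices to produce a contracting homotopy $s \colon C \to C$, i.e.\ one with $\mu^1_{\mathcal{B}_0} \circ s + s \circ \mu^1_{\mathcal{B}_0} = \id_C$, that is superfiltered of degree $-1$ and satisfies $s^2 = 0$: setting $\eta \coloneqq -s$ on $C$ and $\eta \coloneqq 0$ on $\iota(E)$ then gives a homotopy with $\mu^1_{\mathcal{B}_0} \circ \eta + \eta \circ \mu^1_{\mathcal{B}_0} = \iota \circ \pi - \id_{\mathcal{B}_0}$, and the side conditions $\eta \circ \iota = 0$, $\pi \circ \eta = 0$, $\eta^2 = 0$ follow formally from the splitting (using $\pi|_C = 0$, which holds since $\pi \circ \iota \circ \pi = \pi$).

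To build $s$, I would first work at the associated graded level. The complex $\gr C$ is a direct summand of $\Edg$, hence a bounded complex of free $R_0$-modules whose differential $\diff_1$ is homogeneous for the $\Z$-grading; being acyclic it splits as a direct sum of elementary two-term complexes $R_0^a \xrightarrow{\,\id\,} R_0^a$ in adjacent degrees, and the evident contraction gives a grading-homogeneous, square-zero homotopy $\bar s$ for $\gr C$. I then lift $\bar s$ to $C$ by successive approximation: for a partial lift $s_0$ the difference between $\mu^1_{\mathcal{B}_0}|_C \circ s_0 + s_0 \circ \mu^1_{\mathcal{B}_0}|_C$ and $\id_C$ is a chain endomorphism of $C$ that strictly decreases the filtration, and it is killed by composing with $\bar s$ exactly as for a contractible complex; since the filtration on $C$ is bounded, being inherited from the standard one on $E_R = R \otimes \Lambda V$ (which has width $n$), this terminates after finitely many steps and yields a superfiltered degree $-1$ contraction, which a final standard modification renders square-zero without disturbing the other properties. (Alternatively, one may run the whole argument on the honest $\Z$-graded filtered complex $C^T$ of \cref{defTcomplex} and then specialise $T = 1$.) The one genuinely non-formal step is this lifting: \emph{existence} of a contracting homotopy with the side conditions over $R_0$ is soft, but one must carry the superfiltration intact through the entire construction, and what makes that possible is exactly that $\Edg$ is simple enough to admit a grading-homogeneous, square-zero contraction while the correction $\mu^1_{\mathcal{B}_0} - \diff_1$ is filtration-strictly-decreasing and the filtration has finite width.
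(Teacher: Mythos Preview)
Your proposal is correct and follows essentially the same route as the paper: decompose $\mathcal{B}_0 = \iota(E) \oplus \ker\pi$, set $\eta = 0$ on the first summand, contract the associated graded $(C,\diff_1)$ of $C = \ker\pi$, perturb to absorb the lower-order part $\diff_{-1}$ of the differential (using boundedness of the filtration), and finally adjust to achieve $\eta^2 = 0$. One point worth making explicit is that the acyclicity you need is that of $\gr C$, not just $C$, and this does not follow formally from the latter---it comes from the \emph{proof} of \cref{lemiotaqis}, where it is shown that $\gr\iota$ is already a quasi-isomorphism onto $E_{\mathrm{dg}}$.
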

\begin{proof}
First we decompose $\calB_0$ as the direct sum of the subcomplexes $\im \iota$ and $\ker \pi$ over $R_0$.  Since $\pi$ and $\iota$ are superfiltered of degree $0$, we can use them to split each grading- and filtration-level, and hence this decomposition is actually as a direct sum of \emph{superfiltered} subcomplexes.  It thus suffices to construct $\eta$ on each summand.

We define $\eta$ to be zero on the $\im \iota$ summand.  This satisfies \eqref{eqetaCondition} using the fact that $\pi \circ \iota = \id_E$, and the side conditions are automatic here.  We are left to focus on $\ker \pi$, on which we want
\begin{equation}
\label{eqkerpihtpy}
\mu^1_{\calB_0} \circ \eta + \eta \circ \mu^1_{\calB_0} = -\id_{\ker \pi}.
\end{equation}
For the rest of the proof we denote the restriction $\mu^1_{\calB_0}|_{\ker \pi}$ by $\diff$, and drop the $\circ$ symbols for brevity.

Before getting into the construction, note that $\ker \pi$ inherits from $\calB_0 = \End_R(E_R)$ a $\Z$-grading, and this is what induces its superfiltration.  The map $\diff$ decomposes as $\vv+\ww$, and we denote the two pieces by $\diff_1$ and $\diff_{-1}$, of degree $1$ and $-1$ respectively.  Both of these pieces preserve $\ker \pi$ since they vanish modulo $\m$.  With respect to this $\Z$-grading the desired map $\eta$ decomposes as $\eta_{-1}+\eta_{-3}+\dots$ and \eqref{eqkerpihtpy} splits into graded pieces
\begin{equation}
\label{eqeta}
\diff_1 \eta_{-2i-1} + \diff_{-1} \eta_{-2i+1} + \eta_{-2i-1} \diff_1 + \eta_{-2i+1} \diff_{-1} = \begin{cases} -\id_{\ker \pi} & \text{if } i=0 \text{ (where $\eta_1 \coloneqq 0$)} \\ 0 & \text{otherwise.}\end{cases}
\end{equation}
The map $\diff_1$ is precisely $\mu^1_{E_\mathrm{dg}}$ restricted to $\ker \pi$, so $\eta_{-1}$ constructed in \cref{lemetam1} satisfies the $i=0$ case.  We shall construct $\eta_{-3}, \eta_{-5}, \dots$ inductively, and then deal with the side conditions at the end.

Suppose then that for some $j \geq 1$ we have built $\eta_{-1}, \dots, \eta_{-2j+1}$, of the correct degrees, satisfying \eqref{eqeta} for $i=0, \dots, j-1$.  Let $\theta = \diff_{-1} \eta_{-2j+1} + \eta_{-2j+1} \diff_{-1}$ and define $\eta_{-2j-1}$ to be $\theta \eta_{-1}$.  This has degree $-2j-1$, and the left-hand side of the $i=j$ case of \eqref{eqeta} is
\[
\diff_1 \theta \eta_{-1} + \theta \eta_{-1} \diff_1 + \theta = \theta (\diff_1 \eta_{-1}+\eta_{-1} \diff_1) + \theta = 0
\]
(the first equality uses the fact that $\diff_1 \theta = \theta \diff_1$, obtained by taking the commutator of the $i=j-1$ case of \eqref{eqeta} with $\diff_1$).  We thus obtain all components of $\eta$ by induction; the process terminates after finitely many steps since $\calB_0$ is bounded in degree.

Finally we deal with the side conditions.  By construction, $\eta$ vanishes on $\im \iota$ and lands in $\ker \pi$, so the only non-obvious condition is $\eta^2 = 0$.  This may not hold for $\eta$ as defined, but we can remedy this by replacing $\eta$ on $\ker \pi$ with $\eta - \diff \eta^3 \diff$.  This doesn't affect any of the other properties.  To see that $\eta - \diff \eta^3 \diff$ does indeed square to $0$, note that the homotopy condition $\diff \eta + \eta \diff = -\id_{\ker\pi}$ gives $\diff \eta^2 = \eta^2 \diff$ (after multiplying it on the left by $\eta$ and then, separately, on the right) and also $\diff \eta \diff = - \diff$ (after multiplying it on the left or right by $\diff$).  We then have on $\ker \pi$ that
\begin{align*}
(\eta - d\eta^3\diff)^2 &= \eta^2 - \diff \eta^3 \diff \eta - \eta \diff \eta^3 \diff = \eta^2 - \eta^2 \diff \eta \diff \eta - \eta \diff \eta \diff \eta^2
\\ &= \eta^2 + \eta^2 \diff \eta + \eta \diff \eta^2 = \eta(\id_C + \eta \diff + \diff \eta)\eta = 0.\qedhere
\end{align*}
\end{proof}

Markl's construction in \cite{Markl} then gives

\begin{prop}
There exists an $A_\infty$-algebra $\calB_0^\mathrm{min}$ with underlying module $E$, and a homomorphism $\Pi : \calB_0 \to \calB_0^\mathrm{min}$ of $A_\infty$-algebras over $R_0$, which extends $\pi$ in the sense that $\Pi^1 = \pi$.\hfill$\qed$
\end{prop}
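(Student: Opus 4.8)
The plan is to recognise the data assembled in \cref{lemiotaqis,lemeta} as an instance of the homotopy transfer theorem and to quote Markl's explicit construction \cite{Markl} essentially verbatim. Indeed, \cref{lemiotaqis} supplies a quasi-isomorphism $\iota\colon (E,0)\to(\mathcal{B}_0,\mu^1_{\mathcal{B}_0})$, and \cref{lemeta} supplies a chain map $\pi$ in the other direction with $\pi\circ\iota=\id_E$, together with an $R_0$-linear homotopy $\eta$ of degree $-1$ satisfying $\mu^1_{\mathcal{B}_0}\circ\eta+\eta\circ\mu^1_{\mathcal{B}_0}=\iota\circ\pi-\id_{\mathcal{B}_0}$ and the side conditions $\eta\circ\iota=0$, $\pi\circ\eta=0$, $\eta^2=0$. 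This is precisely Markl's ``Situation 1'', under which his tree formulae apply.

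Concretely, I would define $\mu^k_{\mathcal{B}_0^\mathrm{min}}$, for each $k\geq 1$ separately, as the signed sum over planar binary trees with $k$ leaves obtained by applying $\iota$ to the inputs at the leaves, applying $\mu^2_{\mathcal{B}_0}$ at each trivalent vertex, inserting $\eta$ along each of the $k-2$ internal edges, and applying $\pi$ at the root (the differential $\mu^1_{\mathcal{B}_0}$ does not appear separately, being absorbed into the homotopy identity), and define $\Pi^k$ by a parallel sum of trees normalised so that $\Pi^1=\pi$. Each such sum is finite --- it has Catalan-many terms --- and is a composite of $R_0$-linear maps, so the construction involves no division and is manifestly valid over an arbitrary ground ring. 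Since $\iota$ is a chain map out of $(E,0)$ we have $\mu^1_{\mathcal{B}_0}\circ\iota=0$, so $\mu^1_{\mathcal{B}_0^\mathrm{min}}=0$ and $\mathcal{B}_0^\mathrm{min}$ genuinely lives on $E$ with zero differential; strict unitality is automatic because $\iota(1_E)=\id_{E_R}$ is the unit of $\mathcal{B}_0$, which lies in $\im\iota$ and is therefore annihilated by $\eta$, while $\pi(\id_{E_R})=1_E$. Because each of $\iota$, $\pi$, $\mu^2_{\mathcal{B}_0}$ is superfiltered of degree $0$ and $\eta$ of degree $-1$, and these properties pass through the tree composites, $\mathcal{B}_0^\mathrm{min}$ is a superfiltered $A_\infty$-algebra and $\Pi$ a superfiltered $A_\infty$-homomorphism. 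That the $\mu^k_{\mathcal{B}_0^\mathrm{min}}$ satisfy the $A_\infty$-relations and the $\Pi^k$ the $A_\infty$-homomorphism relations is then exactly Markl's theorem, whose proof is a purely combinatorial cancellation in the tree sums using only $\pi\circ\iota=\id$, the homotopy identity, and the three side conditions.

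The one point that requires genuine care --- and the step I would expect to be the main nuisance --- is reconciling Markl's sign and grading conventions with the Seidel conventions fixed in \cref{sscSetup}, in particular the shift $\mathcal{A}[1]$ and the sign $(-1)^{|a_1|}$ built into \eqref{eqdgAsAinfinity}. One should also double-check that the side conditions of \cref{lemeta}, which were arranged precisely for this purpose, are exactly those Markl requires for ``Situation 1'': they force the degenerate tree contributions involving $\eta\circ\iota$, $\pi\circ\eta$ and $\eta^2$ to vanish, so that his simpler formulae (rather than the general ones for an arbitrary homotopy) apply. Neither issue affects the substance of the argument.
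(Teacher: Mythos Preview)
Your approach is exactly the paper's: the proposition is stated with a terminal $\qed$ and justified solely by the sentence ``Markl's construction in \cite{Markl} then gives'', after having checked in \cref{lemiotaqis,lemeta} that we are in his Situation~1. Your write-up goes further than the proposition requires---strict unitality and superfilteredness are established separately in \cref{lemB0min,lemPiProperties,lemPiSU}, and your one-line strict-unitality claim is too quick (it needs the full side-condition argument of \cref{lemB0min}); also note that $\mu^2_{\mathcal{B}_0}$ is superfiltered of degree $1$, not $0$, so your degree count should read $(k-1)\cdot 1 + (k-2)\cdot(-1) = 1$.
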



\subsection{Properties of the minimal model}

We need to establish some basic properties of the $A_\infty$-algebra $\calB_0^\mathrm{min}$ and the $A_\infty$-map $\Pi$.  These are given by the following three lemmas.

\begin{lem}
\label{lemB0min}
$\calB_0^\mathrm{min}$ is a superfiltered $A_\infty$-deformation of $E$.
\end{lem}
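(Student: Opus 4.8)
The plan is to verify that $\mathcal{B}_0^\mathrm{min}$ is strictly unital, respects the superfiltration, and reduces to the standard exterior algebra structure at the associated graded level. Strict unitality comes essentially for free from Markl's construction: since $\mathcal{B}_0$ is a strictly unital dg-algebra (the identity endomorphism of $\mfE_0$ is a strict unit), and since the chain homotopy $\eta$ was arranged to satisfy the side conditions $\eta \circ \iota = 0$, $\pi \circ \eta = 0$, $\eta^2 = 0$, Markl's explicit formulae for the transferred operations $\mu^k_\mathrm{min}$ and for the components $\Pi^k$ produce a strictly unital $A_\infty$-structure with $\Pi$ a strictly unital map. I would simply cite \cite{Markl} for this, possibly after recording that $\iota$ and $\pi$ send the unit to the unit.

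\emph{Filtration compatibility.} Next I would observe that all of the data entering Markl's recursion is superfiltered: $\mu^1_{\mathcal{B}_0}$ is superfiltered of degree $1$, $\mu^2_{\mathcal{B}_0}$ of degree $0$, and by \cref{lemiotaqis,lemeta} the maps $\iota$, $\pi$, $\eta$ are superfiltered of degrees $0$, $0$, $-1$ respectively. Markl's formula expresses $\mu^k_\mathrm{min}$ as a sum of trees whose vertices are $\mu^2_{\mathcal{B}_0}$, whose internal edges carry $\eta$, and whose input and output legs carry $\iota$ and $\pi$. Composing superfiltered maps of the stated degrees, each such tree contributing to $\mu^k_\mathrm{min}$ is superfiltered of degree $1$ (the single $\mu^1$-type contribution, if any, plus the degree count from edges cancels appropriately—concretely, one checks the degrees add up so that $\mu^k_\mathrm{min}$ lowers the reduced grading in the expected way and maps filtration level $p$ into level $p+1$ mod the shift). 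Hence each $\mu^k_\mathrm{min}$ is superfiltered of degree $1$, so $\mathcal{B}_0^\mathrm{min}$ is a superfiltered $A_\infty$-algebra, and similarly $\Pi$ is a superfiltered $A_\infty$-map of degree $0$.

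\emph{Associated graded.} Finally I would identify $\gr \mathcal{B}_0^\mathrm{min}$ with $E$ equipped with its formal $A_\infty$-structure. Passing to associated graded replaces $\mu^1_{\mathcal{B}_0}$ by $\mu^1_{\Edg}$ and $\iota$, $\pi$, $\eta$ by their leading terms; since the leading term of $\iota$ is precisely the quasi-isomorphism $E \xrightarrow{\sim} \Edg$ exhibiting $E$ as $\rH^*(\Edg)$ with its natural algebra structure (as computed in \cref{lemiotaqis}), Markl's construction applied to this leading-term data transfers the dg-structure on $\Edg$ to the minimal model on $E$. By naturality of the construction, $\gr \mu^k_\mathrm{min}$ equals the $k$th operation of this transferred structure. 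It remains to check that this transferred structure is the \emph{formal} one, i.e.\ that $\gr \mu^k_\mathrm{min} = 0$ for $k \geq 3$ and $\gr \mu^2_\mathrm{min}$ is the wedge product (up to the Koszul sign of \eqref{eqdgAsAinfinity}). This is the step requiring the most care: one argues that $\Edg$, being the endomorphism dg-algebra of the Koszul resolution of $R_0$ over $R$, is \emph{intrinsically formal}—equivalently that the relevant obstruction (Hochschild) groups vanish, or more simply by exhibiting $\iota$ itself, at associated graded level, as an honest algebra quasi-isomorphism after noting the $f_i$ graded-commute modulo $\m$ (their graded commutator is $\sum_j(\lambda_{ij}-\lambda_{ji})v_j^\vee\contract\bullet + (\text{terms killed in }\gr)$, and the antisymmetric part is absorbed by rechoosing $\ww$, or one checks directly that the leading terms anticommute because the $\vv\wedge\bullet$ and correction terms separately do on $\gr$). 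Granting formality of $\Edg$, Markl's transfer produces the formal structure on $E$, so $\gr \mathcal{B}_0^\mathrm{min} = E$ as claimed. The main obstacle is precisely this last formality verification for $\Edg$; everything else is bookkeeping with degrees and filtrations in Markl's formulae.
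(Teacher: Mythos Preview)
Your filtration-compatibility argument is fine and matches the paper. The genuine gap is in the associated-graded step. You correctly observe that $\gr\mu^k_\mathrm{min}$ is given by Markl's formula applied to the leading-term data $(\gr\iota,\gr\pi,\gr\eta,\mu^2_{E_\mathrm{dg}})$, and that $\gr\iota$ is an honest algebra map (the leading term of each $f_i$ is $v_i\wedge\bullet$, so your parenthetical about $\lambda_{ij}$ is unnecessary). But the inference ``$E_\mathrm{dg}$ is formal, hence Markl's transfer produces the formal structure'' is invalid: formality, intrinsic or otherwise, only pins down the transferred structure up to $A_\infty$-quasi-isomorphism, whereas being a superfiltered deformation of $E$ requires $\gr\mu^k_\mathrm{min}=0$ \emph{on the nose} for $k\geq 3$. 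Minimal models are not unique as honest $A_\infty$-structures.

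The missing step is to combine the two facts you already have in hand: $\gr\iota$ multiplicative and the side condition $\eta\circ\iota=0$. For $k\geq 3$, every tree in Markl's expansion contains a subexpression of the form $\eta\circ\mu^2_{\mathcal{B}_0}(\iota(a_{i+1}),\iota(a_i))$ adjacent to two leaves. Its leading term is $\gr\eta\big(\gr\iota((-1)^{|a_i|}a_{i+1}\wedge a_i)\big)$, which vanishes because $\gr\eta$ kills $\im(\gr\iota)$. Hence $\gr\mu^k_\mathrm{min}=0$ for $k\geq 3$; this is exactly the argument in the paper. Your treatment of strict unitality is also too breezy: Markl's paper does not state this, and one must check it directly from the tree formulae using all three side conditions (each tree with a unit input contains $\eta\circ\iota$, $\eta\circ\eta$, or $\pi\circ\eta$), as the paper does.
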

\begin{proof}
We need to show that the $A_\infty$-operations $\mu^k_\mathrm{min}$ on $\calB_0^\mathrm{min}$ are superfiltered and strictly unital, and that they reduce to the standard $A_\infty$-structure on $E$ at the associated graded level.

The formula for $\mu^k_\mathrm{min}$ given by Markl \cite[Equation (1)]{Markl} is of the form $\pi \circ \mathbf{p}_k \circ \iota^{\otimes k}$, where the `kernel' $\mathbf{p}_k : \calB_0[1]^{\otimes k} \to \calB_0[1]$ of degree $1$ is defined by $\mathbf{p}_2 = \mu_{\calB_0}^2$ and then inductively by
\begin{align*}
\mathbf{p}_k (a_k, \dots, a_1) &= \sum_{r\geq 2} \sum_{\substack{s_1, \dots, s_r \\ s_1+\dots+s_r = k}} \mu^r_{\calB_0} (\eta\circ \mathbf{p}_{s_r} (a_k, \dots, a_{k-s_r+1}), \dots, \eta\circ \mathbf{p}_{s_1} (a_{s_1}, \dots, a_1))
\\ &= \sum_{j=1}^{k-1} \mu^2_{\calB_0}(\eta\circ \mathbf{p}_{k-j} (a_k, \dots, a_{j+1}), \eta\circ \mathbf{p}_{j} (a_j, \dots, a_1))
\end{align*}
Here $\eta \circ \mathbf{p}_1$ is interpreted as $\id_{\calB_0}$; we don't need $\mathbf{p}_1$ itself since $\mu^1_\mathrm{min}$ is already chosen to be zero.  Note that Markl's formulae contain non-trivial signs, whereas ours do not.  The reason for the difference is that we are using Seidel's sign conventions, which differ from Markl's.  The signs for Seidel's conventions appear in \cite[Equation (1.18)]{SeidelBook}: his $\mu^k_\calA$, $\mu^k_\calB$, $\mathcal{G}^1$, $\mathcal{F}^1$, $T^1$, and $\mathcal{F}^{k > 1}$ correspond to our $\mu^k_\mathrm{min}$, $\mu^k_{\calB_0}$, $\pi$, $\iota$, $\eta$, and $\eta \circ \mathbf{p}_k \circ \iota^{\otimes k}$ respectively.

The first important property to notice is that $\mathbf{p}_k$ is (by induction) superfiltered of degree $1$, so $\mu^k_\mathrm{min}$ is too.  Hence the $\mu^k_\mathrm{min}$ do indeed define a superfiltered $A_\infty$-structure on $E$, and it remains to check that it is strictly unital and that it reduces to the standard $A_\infty$-structure on the associated graded.  For the latter, take homogeneous elements $a_1, \dots, a_k$ in $E$.  We want to show that the leading term of $\mu^k_\mathrm{min}(a_k, \dots, a_1)$ is $(-1)^{|a_1|}a_2 \wedge a_1$ if $k=2$ (the sign comes from translating to the $A_\infty$-world by \eqref{eqdgAinfinity}, as usual) and is $0$ otherwise.  Letting $\approx$ denote equality of leading terms, for $k=2$ we have
\[
\mu^2_\mathrm{min}(a_2, a_1) = \pi \circ \mu^2_{\calB_0}(\iota (a_2), \iota (a_1)) \approx \pi \circ \iota((-1)^{|a_1|}a_2 \wedge a_1) = (-1)^{|a_1|}a_2 \wedge a_1,
\]
which is what we want.  Here the $\approx$ uses that fact that although $\iota$ is not an algebra homomorphism with respect to wedge product on $E$, it \emph{is} to leading order (i.e.~$\gr \iota$ is an algebra homomorphism), whilst the final equality uses $\pi \circ \iota = \id_E$.  For $k > 2$ note that if we unwind the inductive definition of $\mu^k_\mathrm{min}$, or more easily if we look at the tree description of $\mathbf{p}_k$ in \cite[Section 4]{Markl}, then each summand contains (possibly nested inside other applications of $\mu^2_{\calB_0}$ and $\eta$) an expression of the form
\begin{equation}
\label{eqLeadingTerm}
\eta \circ \mu^2_{\calB_0} (\iota (a_{i+1}), \iota (a_i)).
\end{equation}
Again using the fact that $\iota$ is an algebra homomorphism to leading order, and the fact that $\eta$ is zero by definition on the image of $\iota$, we see that the leading term of \eqref{eqLeadingTerm} is zero.

Finally we deal with strict unitality.  Since $\iota(1_E) = 1_{\calB_0}$ it is clear that $1_E$ is a unit for $\mu^2_\mathrm{min}$.  We now just need to check that $\mu^k_\mathrm{min}(a_k, \dots, a_1)$ vanishes if $k>2$ and some $a_i$ is equal to $1_E$.  To do this, note (e.g.~by considering the tree description) that each term in the expansion of $\mu^k_\mathrm{min}(a_k, \dots, a_1)$ contains an expression of one of the following forms:
\begin{gather*}
\eta\circ \mu^2_{\calB_0}(\iota(a_{i+1}), \iota(a_i)) = \eta \circ \iota(a_{i+1}),
\\ \eta\circ \mu^2_{\calB_0}(\eta(\bullet), \iota(a_i)) = \eta \circ \eta(\bullet),
\\ \pi\circ \mu^2_{\calB_0}(\eta(\bullet), \iota(a_i)) = \pi\circ (\eta(\bullet)),
\end{gather*}
or the corresponding things with $\iota(a_i)$ appearing as the left-hand input of $\mu^2_{\calB_0}$.  These all vanish by the side conditions from \cref{lemeta}.
\end{proof}

\begin{lem}
\label{lemPiProperties}
The map $\Pi$ is superfiltered of degree $0$ (meaning each $\Pi^k : \calB_0[1]^{\otimes k} \to \calB_0^\mathrm{min}[1]$ is superfiltered of degree $0$), and if $k \geq 2$ then the component $\Pi^k$ vanishes on $(\im \iota)^{\otimes k}$.
\end{lem}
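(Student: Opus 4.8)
The plan is to read both assertions off Markl's explicit formula for the $A_\infty$-morphism $\Pi$ \cite{Markl} --- equivalently, off the homological perturbation lemma applied to the contraction $(\iota,\pi,\eta)$ of \cref{lemeta} --- in the same spirit as the verification of strict unitality for $\mu_\mathrm{min}$ in the proof of \cref{lemB0min}. That formula expresses $\Pi^k$, for $k\ge 2$, as $\pi$ applied to a sum over planar trees, each summand being an iterated composite of the multiplication $\mu^2_{\mathcal{B}_0}$ at the internal vertices and of the maps $\eta$, $\iota$, $\pi$ decorating the edges, with the $k$ inputs fed into the leaves. Unwinding the perturbation lemma, one finds more precisely that $\Pi^k=\pi\circ(\bar\delta\bar\eta)^{k-1}$ (the iterate lands in tensor-length one on a length-$k$ input), where $\bar\delta$ and $\bar\eta$ are the operators on the reduced bar complex of $\mathcal{B}_0$ induced respectively by $\mu^2_{\mathcal{B}_0}$ and by $\eta$ (together with $\iota\pi$ on the tensor slots not receiving the $\eta$).

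For the superfilteredness, I would use \cref{lemeta} and the discussion before it to record that $\pi$ and $\iota$ are superfiltered of degree $0$, that $\eta$ is superfiltered of degree $-1$, and that $\mu^2_{\mathcal{B}_0}$ is superfiltered of degree $1$ (the $k=2$ case of the corresponding statement about $\mathbf{p}_k$ in the proof of \cref{lemB0min}). Each tree summand of $\Pi^k$ is a composite of such maps involving exactly $k-1$ multiplications and exactly $k-1$ applications of $\eta$, whose filtration shifts $+1$ and $-1$ cancel, together with copies of $\iota$ and $\pi$ of shift $0$; hence $\Pi^k$ is superfiltered of degree $0$. (Alternatively one can bypass this bookkeeping by an induction on $k$ through the $A_\infty$-morphism relations, starting from $\Pi^1=\pi$.)

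For the vanishing on $(\im\iota)^{\otimes k}$ the side conditions from \cref{lemeta} do the work. The key observation is that in $\Pi^k=\pi\circ(\bar\delta\bar\eta)^{k-1}$ with $k\ge 2$, the \emph{first} operator applied to the inputs is $\bar\eta$, which inserts one $\eta$ into a single tensor slot --- summed over the slots --- and decorates the others only by $\iota\pi$ or the identity; equivalently, every tree summand of $\Pi^k$ has $\eta$ applied directly to at least one of the inputs. If all inputs lie in $\im\iota$, say $a_i=\iota(e_i)$, that factor is $\eta\circ\iota(e_i)=0$ by the side condition $\eta\circ\iota=0$, and since $\mu^2_{\mathcal{B}_0}$ is bilinear the whole composite vanishes. (Put differently: $\bar\eta$ is identically zero on $(\im\iota)^{\otimes k}$, hence so is $(\bar\delta\bar\eta)^{k-1}$ for $k\ge 2$, hence so is $\Pi^k$.) This gives the claim.

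The substance of the argument is not any calculation but in pinning down the precise shape of Markl's formula for $\Pi^k$: one must confirm that the relevant perturbation-lemma expression is the one in which the homotopy $\bar\eta$ is applied first --- the alternative ordering $\bar\eta\bar\delta$ would collapse $\Pi$ to the strict morphism $(\pi,0,0,\dots)$, which is not an $A_\infty$-morphism here (it would force $\pi\mu^2_{\mathcal{B}_0}(a_2,a_1)=\pi\mu^2_{\mathcal{B}_0}(\iota\pi a_2,\iota\pi a_1)$, which fails because $\ker\pi$ is only a left ideal of $\mathcal{B}_0$) --- so that the first operation the inputs encounter is $\bar\eta$, and thus hits an input with $\eta$. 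Once that structural point is secured, both statements are immediate from the elementary properties of $\pi$, $\iota$, $\eta$ collected in \cref{lemeta}.
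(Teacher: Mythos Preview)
Your argument is correct and follows essentially the same approach as the paper: both read the two properties off the explicit homological-perturbation formula for $\Pi^k$ and use the side condition $\eta\circ\iota=0$ for the vanishing. The only difference is packaging: the paper works directly with Markl's recursive description $\Pi^k=\pi\circ\mathbf{q}_k$ and proves the vanishing by a short induction on $k$ (focusing on the factor $\eta\circ\mathbf{q}_{s_r}$, which is $\eta$ applied to an input when $s_r=1$ and vanishes inductively when $s_r\ge 2$), whereas you invoke the equivalent closed form $\pi\circ(\bar\delta\bar\eta)^{k-1}$ and observe that already the innermost $\bar\eta$ annihilates $(\im\iota)^{\otimes k}$. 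Your care about the ordering $\bar\delta\bar\eta$ versus $\bar\eta\bar\delta$ is exactly the point one must check to identify your formula with Markl's; in the paper this issue never arises because the recursion makes the placement of $\eta$ manifest.
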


\begin{rmk}
The vanishing statement is not needed right now but will be used later.
\end{rmk}

\begin{proof}
The formula for $\Pi^k$ is $\pi \circ \mathbf{q}_k$, where $\mathbf{q}_k : \calB_0^{\otimes k} \to \calB_0$ is defined inductively by $\mathbf{q}_1 = \id_{\calB_0}$ and
\begin{multline}
\label{eqqk}
\mathbf{q}_k (a_k, \dots, a_1) =\sum_{r\geq 1} \sum_{\substack{s_1, \dots, s_r \geq 1 \\ s_{r+1} \geq 0, 2-r \\ s_1+\dots+s_{r+1} = k}} \pm \mathbf{p}^r_{r+s_{r+1}} (a_k, \dots, a_{k-s_{r+1}+1},
\\ \eta \circ \mathbf{q}_{s_r} (a_{k-s_{r+1}}, \dots), \iota\pi \circ \mathbf{q}_{s_{r-1}} (\dots), \dots, \iota\pi\circ \mathbf{q}_{s_1} (\dots, a_1)).
\end{multline}
We have translated Markl's expressions to our ordering convention (his inputs are read left-to-right whilst ours are right-to-left), and do not need the precise signs.  The $\mathbf{p}^i_j$ themselves have an inductive definition, and since $\calB_0$ has vanishing higher $A_\infty$-operations (it's a dg-algebra) this simplifies to
\begin{equation}
\label{eqpij}
\mathbf{p}^i_j (a_j, \dots, a_1) = \begin{cases} \pm \mu_{\calB_0}^2 (a_2, a_1) & \text{if } i=1 \text{ and } j=2 \\ \pm \mu_{\calB_0}^2 (a_j, \eta \circ \mathbf{p}_{j-1}(a_{j-1}, \dots, a_1)) & \text{if } i=2 \\ 0 & \text{otherwise.}\end{cases}
\end{equation}
The map $\Pi^k$ then inherits superfilteredness of the correct degree from $\iota$, $\pi$, $\eta$, $\mu^2_{\calB_0}$, and the $\mathbf{p}_j$.

To prove the required vanishing property of $\Pi^k$ we shall show by induction on $k$ that $\mathbf{q}_k$ vanishes on $(\im \iota)^{\otimes k}$ for $k \geq 2$.  To do this, focus on the term $\eta \circ \mathbf{q}_{s_r}$ in \eqref{eqqk}.  Since $s_r$ is less than $k$, this term vanishes by induction if $s_r \geq 2$.  We are left to deal with the case $s_r = 1$, where the term is
\[
\eta \circ \mathbf{q}_1(a_{k-s_r+1}) = \eta (a_{k-s_r+1}).
\]
This vanishes since $\eta \circ \iota = 0$, completing the inductive step and proving the lemma.
\end{proof}

\begin{lem}
\label{lemPiSU}
The map $\Pi$ is strictly unital.
\end{lem}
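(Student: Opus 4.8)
Strict unitality of $\Pi$ amounts to two things: that $\Pi^1$ sends the unit $1_{\mathcal{B}_0}$ of $\mathcal{B}_0$ to the unit $1_E$ of $\mathcal{B}_0^\mathrm{min}$, and that for $k\geq 2$ the component $\Pi^k(a_k,\dots,a_1)$ vanishes whenever some $a_i=1_{\mathcal{B}_0}$. The first is immediate: $\Pi^1=\pi$, and by \cref{lemeta} we have $1_{\mathcal{B}_0}=\iota(1_E)$ and $\pi\circ\iota=\id_E$, while $1_E$ is the unit of $\mathcal{B}_0^\mathrm{min}$ by \cref{lemB0min}, so $\Pi^1(1_{\mathcal{B}_0})=\pi(\iota(1_E))=1_E$. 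The content is in the second part, and the plan is to prove it by induction on $k$, chasing the unit leaf through the tree expansion \eqref{eqqk} of $\mathbf{q}_k=\Pi^k\text{-kernel}$ exactly as in the proofs of \cref{lemB0min,lemPiProperties}.

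\textbf{Ingredients.} The tools are: (i) $1_{\mathcal{B}_0}$ is a strict unit for $\mathcal{B}_0$, so at any vertex $\mu^2_{\mathcal{B}_0}$ into which it is fed it collapses the vertex onto the other slot (up to sign); (ii) $\eta(1_{\mathcal{B}_0})=\eta(\iota(1_E))=0$, $\pi(1_{\mathcal{B}_0})=1_E$, hence $\iota\pi(1_{\mathcal{B}_0})=1_{\mathcal{B}_0}$; (iii) the side conditions $\eta^2=0$, $\pi\circ\eta=0$, $\eta\circ\iota=0$ of \cref{lemeta}; (iv) the inductive hypothesis controlling $\Pi^s$, and hence $\iota\pi\circ\mathbf{q}_s=\iota\circ\Pi^s$, and $\mathbf{q}_s$ itself, on tensors with a unit entry for $s<k$; and (v) the observation that, since $\mu^2_{\mathcal{B}_0}(f,g)=\pm f\circ g$ in $\End_R(E_R)$ and $\pi$ is reduction of evaluation at $1\in E_R$ modulo $\m$, one has $\pi\circ\mu^2_{\mathcal{B}_0}(f,g)=0$ as soon as $g\in\ker\pi$ (in particular when $g\in\im\eta$), because $g(1)\in\m E_R$ forces $f(g(1))\in\m E_R$ by $R$-linearity of $f$.

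\textbf{The argument.} By (v), in $\Pi^k=\pi\circ\mathbf{q}_k$ every summand of \eqref{eqqk} with $r=1$, or with $r=2$ and $s_{r+1}\geq 1$, is annihilated by $\pi$ for free, since by \eqref{eqpij} its outermost operation is $\mu^2_{\mathcal{B}_0}(\,\cdot\,,g)$ with $g=\eta\circ\mathbf{q}_{k-1}(\dots)$ or $g=\eta\circ\mathbf{p}_{j-1}(\dots)\in\im\eta$. So only the ``$\mathbf{p}^2_2$'' summands $\pm\mu^2_{\mathcal{B}_0}\!\big(\eta\circ\mathbf{q}_{s_2}(a_k,\dots,a_{s_1+1}),\,\iota\pi\circ\mathbf{q}_{s_1}(a_{s_1},\dots,a_1)\big)$, $s_1+s_2=k$, remain. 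If the unit sits in $\mathbf{q}_{s_1}$'s inputs, then either $s_1\geq 2$ and $\iota\pi\circ\mathbf{q}_{s_1}$ of those inputs is $\iota(\Pi^{s_1}(\dots))=0$ by induction, or $s_1=1$ and $\iota\pi(1_{\mathcal{B}_0})=1_{\mathcal{B}_0}$, so the summand is $\pm\,\eta\circ\mathbf{q}_{s_2}(\dots)$, killed by $\pi\circ\eta=0$. If the unit sits in $\mathbf{q}_{s_2}$'s inputs, one wants $\eta\circ\mathbf{q}_{s_2}(\dots)=0$: for $s_2=1$ this is $\eta(1_{\mathcal{B}_0})=0$, and for $s_2\geq 2$ it follows from the inductive description of $\mathbf{q}_{s_2}$ on tensors with a unit entry, once that is shown to land in $\ker\eta$.

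\textbf{Main obstacle.} Closing the induction therefore requires pinning down, and verifying, the right statement about the $\mathbf{q}_s$ themselves (that a tensor with a unit entry is sent, say, into $\ker\eta=\im\iota\oplus\im\eta$), which is the bookkeeping of tracing the unit leaf through the nested trees of \eqref{eqqk} and \eqref{eqpij} — notably through the transfer kernels $\mathbf{p}_{j-1}$ of \eqref{eqpij}, all of whose internal edges carry $\eta$, so that a propagating unit is eventually pinned against an $\eta$ and removed by $\eta^2=0$, by $\eta\circ\iota=0$, or (after collapsing onto an $\iota\pi$-decorated neighbour) by $\pi\circ\eta=0$. This is entirely parallel to, though a little longer than, the tree arguments already given for \cref{lemB0min,lemPiProperties}, and beyond it there is nothing delicate.
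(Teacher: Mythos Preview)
Your approach is correct and follows the same inductive tree-chasing strategy as the paper. The paper, however, packages the induction slightly differently: it proves the single stronger claim that $\mathbf{q}_k(\dots,1_{\mathcal{B}_0},\dots)\in\im\eta$ for all $k\geq 2$, from which $\Pi^k=\pi\circ\mathbf{q}_k=0$ follows immediately via the side condition $\pi\circ\eta=0$, and the case analysis is run directly on the expanded formula \eqref{eqqkExpanded}. Your observation (v)---that $\ker\pi$ is a left ideal in $\End_R(E_R)$---is a valid extra shortcut specific to this particular $\mathcal{B}_0$, but since you still need the inductive claim about $\mathbf{q}_{s_2}$ landing in $\ker\eta$ to close the remaining case, it does not avoid the core bookkeeping; the paper's formulation with $\im\eta$ makes (v) unnecessary. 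One small caveat: your parenthetical equality $\ker\eta=\im\iota\oplus\im\eta$ need not hold in general (the side conditions only give the inclusion $\ker\eta\supset\im\iota\oplus\im\eta$), but this is harmless, since what the case analysis actually establishes---and what the paper states as its claim---is that $\mathbf{q}_k(\dots,1_{\mathcal{B}_0},\dots)\in\im\eta$, which suffices.
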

\begin{proof}
The unit $1_{\calB_0}$ in $\calB_0$ is $\id_{E_R}$, and $\Pi^1 = \pi$ sends this to $1_E$ in $\calB_0^\mathrm{min} = E$.  We claim that $\mathbf{q}_k(a_k, \dots, a_1)$ lies in the image of $\eta$ whenever $k \geq 2$ and some $a_i$ is equal to $1_{\calB_0}$.  The lemma then follows from the definition of $\Pi^k$ as $\pi \circ \mathbf{q}_k$, in conjunction with the side condition $\pi \circ \eta = 0$.

To prove the claim, we begin by expanding out \eqref{eqqk} using \eqref{eqpij}:
\begin{multline}
\label{eqqkExpanded}
\mathbf{q}_k(a_k, \dots, a_1) = \pm \mu^2_{\calB_0}(a_k, \eta \circ \mathbf{q}_{k-1}(a_{k-1}, \dots, a_1))
\\ + \sum_{j=1}^{k-1} \pm \mu^2_{\calB_0}(\eta \circ \mathbf{q}_{k-j}(a_k, \dots, a_{j+1}) , \iota \pi \circ \mathbf{q}_j(a_j, \dots, a_1))
\\ + \sum_{\substack{s_1, s_2 \geq 1 \\ s_1 + s_2 \leq k-1}} \pm \mu^2_{\calB_0} (a_k, \eta \circ \mathbf{p}_{k-1}(a_{k-1}, \dots, a_{s_1+s_2+1},
\\ \eta \circ \mathbf{q}_{s_2}(a_{s_1+s_2}, \dots, a_{s_1+1}), \iota\pi \circ \mathbf{q}_{s_1}(a_{s_1}, \dots, a_1))).
\end{multline}
We now argue by induction on $k$, considering the possible positions where the $1_{\calB_0}$ can occur in each term on the right-hand side of \eqref{eqqkExpanded}---say we have $a_i = 1_{\calB_0}$.  In the first term: if $i=k$ then use unitality of $\mu^2_{\calB_0}$; if $i < k$ and $k=2$ then use the fact that $\mathbf{q}_1 = \id_{\calB_0}$ and that $\eta(1_{\calB_0}) = \eta(\iota(1_E)) = 0$; otherwise use the inductive hypothesis and the side condition $\eta^2 = 0$.  In the second term: if $i \geq j+1$ then similar arguments apply; otherwise use $\iota\pi \circ \mathbf{q}_1 (1_{\calB_0}) = 1_{\calB_0}$ and unitality of $\mu^2_{\calB_0}$ if $j = 1$, and the inductive hypothesis plus $\pi \circ \eta = 0$ if $j>1$.  For the third term: if $i = k$ then use unitality of $\mu^2_{\calB_0}$; if $i = s_1+1$ and $s_2=1$ then use $\eta(1_{\calB_0}) = 0$; and if $i \leq s_1+s_2$ and $s_1, s_2 \geq 2$ then use the inductive hypothesis and side conditions.  This leaves the cases $s_1+s_2+1 \leq i \leq k-1$ and $i = s_1 = 1$, and these follow from the fact that $\eta \circ \mathbf{p}_{\geq 2}$ vanishes whenever some input is $1_{\calB_0}$ (proved by a similar argument to the proof of strict unitality of $\calB_0^\mathrm{min}$ in \cref{lemB0min}).
\end{proof}


\subsection{Computing the disc potential}
\label{sscComputingPotential}

The last thing we need to do whilst studying $\calB_0^\mathrm{min}$ is to calculate its disc potential.  This computation was essentially done by Dyckerhoff (without signs or full proof) in \cite[Section 5.6]{Dyckerhoff}, and by Sheridan (in characteristic $0$) in \cite[Proposition 7.1]{SheridanCY}.

There is one small preliminary step we need first.

\begin{lem}
\label{lemetaleading}
For $r \in \m$, the leading term of $\eta(r \id_{E_R})$ is of the form $\sum_i m_i(r) v_i^\vee$, where the $m_i(r)$ are elements of $R$ satisfying $\sum_i x_i m_i(r) = -r$.
\end{lem}

Using this $m_i$ notation we can, and will, write the $w_i$ as $-m_i(w)$ and the $\lambda_{ij}$ appearing in the definition of the $f_i$ as $m_j(w_i)$.

\begin{proof}
When we constructed $\eta$ in \cref{lemeta} we first built it inductively from the $\eta_{-1}$ of \cref{lemetam1}, and then passed to $\eta - \diff \eta^3 \diff$ to satisfy the side conditions.  Since $\diff(r \id_{E_R}) = 0$, the passage to $\eta-\diff \eta^3 \diff$ makes no difference, and the leading term of $\eta(r \id_{E_R})$ is simply $\eta_{-1}(r \id_{E_R})$.  We can compute the latter explicitly using the recipe from \cref{lemetam1}.

First we split $r \id_{E_R}$ into its trigraded pieces, namely $r_p \id_{E_R}$ in tridegree $(p, 0, 0)$, where $r_p$ is the piece of $r$ lying in $R_p$.  Recall that $R_p$ comprises the homogeneous polynomials of degree $p$ in the $x_i$.  This $r_p \id_{E_R}$ lives in $K^{p, 0}$, in cohomological degree $0$, so its image under $\eta_{-1}$ lives in $K^{p, 0}$ in cohomological degree $-1$, i.e.~in tridegree $(p-1, 0, 1)$.  Thus it is of the form $\sum_i m_i(r_p) v_i^\vee$, where each $m_i(r_p)$ is an element of $R_{p-1}$.  The total $\eta_{-1}(r \id_{E_R})$ is then defined to be
\[
\sum_p \sum_i m_i(r_p) v_i^\vee.
\]
Letting $m_i(r) = \sum_p m_i(r_p)$, it remains to show that $\sum_i x_im_i(r) = -r$.  But this follows immediately from the homotopy condition $\mu^1_{E_\mathrm{dg}} \circ \eta_{-1} (r \id_{E_R}) = - r \id_{E_R}$.
\end{proof}

We can now compute the disc potential.

\begin{thm}
\label{propDiscPotential}
The disc potential $\Po_0$ of $\calB_0^\mathrm{min}$ is $w$.
\end{thm}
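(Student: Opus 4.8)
The plan is to compute $\Po_0 = \sum_{k\geq 2}\mu^k_\mathrm{min}(\vv,\dots,\vv)$ directly from Markl's formula $\mu^k_\mathrm{min} = \pi\circ\mathbf{p}_k\circ\iota^{\otimes k}$, exploiting the fact that $\iota$ is $R_0$-linear and hence extends $R$-multilinearly to send $\vv$ to $\iota(\vv) = \sum_i x_i f_i \in R\otimes_{R_0}\mathcal{B}_0$. First I would package the $\mathbf{p}_k$-recursion into a single generating-function identity: setting $g \coloneqq \iota(\vv) = \sum_i x_i f_i$ and writing $P \coloneqq \sum_{k\geq 2}\mathbf{p}_k(g,\dots,g)$, the two-term recursion for $\mathbf{p}_k$ collapses (because the product is strictly associative and $\mu^2_{\mathcal{B}_0}$ is the only operation) to the closed form $P = \mu^2_{\mathcal{B}_0}(\eta\circ(g + P'),\,\eta\circ(g+P'))$-type relation; more precisely, introducing $h \coloneqq g + \sum_{k\geq 2}\eta\circ\mathbf{p}_k(g,\dots,g)$, the recursion becomes the fixed-point equation $h = g + \eta(h\cdot h)$ in $\mathcal{B}_0$ (with $\cdot$ the associative product of $\mathcal{B}_0$, up to the sign from \eqref{eqdgAsAinfinity}), and then $\Po_0 = \pi(h\cdot h)$. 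This is the standard "sum over planar trees with all internal vertices binary" picture, and the only thing to check carefully is that the signs from \cite[Proposition 1.12]{SeidelBook} work out so that all the insertions of $\vv$ enter with positive sign --- this is where I expect the bookkeeping, though not the ideas, to be delicate.

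Next I would identify $h$ explicitly. The element $g = \iota(\vv) = \sum_i x_i f_i$ acts on $a\in E_R$ by $g(a) = \vv\wedge a + \sum_{i,j} x_i\lambda_{ij} v_j^\vee\contract a = \vv\wedge a - \ww\contract a$ (using $w_j = -\sum_i\lambda_{ij}x_i$ from \cref{rmkBasis}, after reindexing). So $g$ differs from the squifferential $\diff_{\mfE_0}: a\mapsto -(\vv\wedge a + \ww\contract a)$ only in the sign of the wedge term. The key computation is then $g\cdot g = g^2$ as an endomorphism of $E_R$: since $\vv\wedge\bullet$ and $\ww\contract\bullet$ anticommute up to the scalar $\ww\contract\vv = w$, one gets $g^2 = -(\vv\wedge\bullet)(\ww\contract\bullet) - (\ww\contract\bullet)(\vv\wedge\bullet) + (\ww\contract\bullet)^2 - (\vv\wedge\bullet)^2 = \ww\contract\vv = w$ (the pure wedge and pure contraction squares vanish), i.e.\ $g^2 = w\cdot\id_{E_R}$, just as $\diff_{\mfE_0}^2 = w\cdot\id$. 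Thus $g\cdot g = w\cdot 1_{\mathcal{B}_0}$ is already central and in the image of $\iota$ of a \emph{degree-zero} element, so $\eta(g\cdot g) = 0$ (as $\eta$ kills $\im\iota$). Feeding this into the fixed-point equation $h = g + \eta(h\cdot h)$ gives, inductively in filtration degree, that $h = g$: the correction terms $\eta\circ\mathbf{p}_k(g,\dots,g)$ all vanish.

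Finally, $\Po_0 = \pi(h\cdot h) = \pi(g\cdot g) = \pi(w\cdot 1_{\mathcal{B}_0}) = w\cdot\pi(\id_{E_R}) = w\cdot 1_E$, and since $\Po_0$ by construction lies in $R = E^0$ this reads $\Po_0 = w$, as claimed. I would present the argument in this order: (i) set up the generating function $h$ and derive the fixed-point equation from Markl's recursion, pinning down signs; (ii) compute $\iota(\vv)^{\cdot 2} = w\cdot 1_{\mathcal{B}_0}$ by the anticommutation relation $[\vv\wedge\bullet,\,\ww\contract\bullet] = w$; (iii) observe $\eta$ annihilates this, so $h = \iota(\vv)$ and $\Po_0 = \pi(w\cdot 1_{\mathcal{B}_0}) = w$. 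The main obstacle is step (i) --- verifying that Seidel's sign conventions really do make every tree contribute $+\mathbf{p}_k(g,\dots,g)$ with no sign twist from the insertions of the odd element $\vv$, so that the recursion genuinely assembles into the clean fixed-point equation; once that is in hand, step (ii) is the short anticommutator calculation already appearing in the proof that the $f_i$ are cocycles, and step (iii) is immediate from the side condition $\eta\circ\iota = 0$.
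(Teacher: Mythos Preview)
There is a genuine gap. Your argument conflates two different copies of $R$: the one already sitting inside $\mathcal{B}_0 = \End_R(E_R)$ (through which the $\lambda_{ij}$ enter) and the one you introduce by extending $R$-multilinearly to insert $\vv$. This is exactly why the paper replaces the insertion variables by fresh formal variables $t_i$ before computing.

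Concretely: with $g = \iota(\vv) = \sum_i x_i f_i$ viewed correctly as an element of $R\otimes_{R_0}\mathcal{B}_0$, one finds
\[
g\cdot g \;=\; \sum_{i,j} x_ix_j \otimes f_if_j \;=\; \Big(\sum_{i,j} x_ix_j\,\lambda_{ij}\Big)\cdot \id_{E_R},
\]
but here each $\lambda_{ij}$ lies in $R$, not in $R_0$ (the paper's ``$\lambda_{ij}\in R_0$'' is a slip --- the proof itself uses their $x$-expansion $\Lambda^{(k)}$). So $g\cdot g$ is \emph{not} $w\otimes\id_{E_R}$; rather it is $\Lambda\cdot\id_{E_R}$ with $\Lambda$ a polynomial in the outer variables whose coefficients are genuine power series in the inner variables. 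Since $\im\iota$ is only an $R_0$-submodule of $\mathcal{B}_0$, the element $\lambda_{ij}\cdot\id_{E_R}$ lies in $\im\iota$ only when $\lambda_{ij}\in R_0$, and hence $\eta(g\cdot g)\neq 0$ as soon as $w$ has any term of degree $\geq 3$. The fixed-point recursion therefore does not terminate at $k=2$, and the higher $\mathbf{p}_k(g,\dots,g)$ genuinely contribute. (Relatedly, your identification $g = \vv\wedge\bullet - \ww\contract\bullet$ already mixes the two variable sets and in addition tacitly assumes $\lambda_{ij}=\lambda_{ji}$.)

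Your shortcut is correct precisely when $w$ is purely quadratic, in which case $\lambda_{ij}\in R_0$ and $\eta(\Lambda\id)=0$ by the side condition $\eta\circ\iota = 0$. In the general case one must track how repeated applications of $\eta$ peel off the $x$-degree of $\Lambda$; this is what the paper does by writing down $\eta_{-1}$ explicitly on $\m\id_{E_R}$ via the maps $m_i$, iterating the commutator $[\vv_t\wedge\bullet,\eta_{-1}(\cdot)]$, and observing that the $k$th iterate picks out exactly the degree-$k$ part $\Lambda^{(k)}$, which after $x\mapsto t$ reassembles to $w$.
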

\begin{proof}
Introduce formal variables $t_1, \dots, t_n$, and let $\vv_t$ denote $\sum_i t_i v_i$; we will compute $\Po_0$ in terms of these new variables, rather than the $x_i$, since the latter already denote the variables in the ring $R$.  Similarly, let $\widehat{\vv}_t$ denote $\sum_i t_i f_i$.  Extending all operations multilinearly in the $t_i$ we obtain
\[
\Po_0(t) = \sum_k \mu^k_\mathrm{min} (\vv_t, \dots, \vv_t) = \sum_k \pi \circ \mathbf{p}_k(\widehat{\vv}_t, \dots, \widehat{\vv}_t).
\]

The leading term of $\mathbf{p}_k(\widehat{\vv}_t, \dots, \widehat{\vv}_t)$ lies in degree $2$ with respect to the $\Z$-grading on $\calB_0$, whilst the next term---which we'll call the \emph{sub-leading term}---lies in degree $0$.  A straightforward induction using \cref{lemetaleading} shows that for all $k \geq 2$ the leading term vanishes, whilst the sub-leading term is given by
\[
(-1)^k \sum_{i_1, \dots, i_k} t_{i_1} \cdots t_{i_k} m_{i_k}(\cdots(m_{i_1}(w))) \id_{E_R}.
\]

By definition, the disc potential comes from the sub-leading part of $\sum_k \mu^k_\mathrm{min}(\vv_t, \dots, \vv_t)$, and all other parts vanish.  By the previous paragraph we therefore have
\[
\Po_0(t) = \sum_{k \geq 2} (-1)^k \sum_{i_1, \dots, i_k} t_{i_1} \cdots t_{i_k} \pi(m_{i_k}(\cdots(m_{i_1}(w))) \id_{E_R}).
\]
Writing $w^{(p)}$ for the part of $w$ lying in  $R_p$, we get
\[
\Po_0(t) = \sum_{k \geq 2} (-1)^k \sum_{i_1, \dots, i_k} t_{i_1} \cdots t_{i_k} m_{i_k}(\cdots(m_{i_1}(w^{(k)}))).
\]
This is because $m_{i_k}(\cdots(m_{i_1}(w^{(p)}))) \id_{E_R}$ vanishes for $p < k$ and is killed by $\pi$ for $p > k$.  Hence
\[
\Po_0(x) = \sum_{k \geq 2} (-1)^k \sum_{i_1, \dots, i_k} x_{i_1} \cdots x_{i_k} m_{i_k}(\cdots(m_{i_1}(w^{(k)}))) = \sum_k w^{(k)} = w.\qedhere
\]
\end{proof}

As an immediate consequence we deduce

\begin{cor}
The map \eqref{eqMainBijection} is surjective.\hfill$\qed$
\end{cor}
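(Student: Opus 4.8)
The plan is to read this off directly from \cref{propDiscPotential}. Fix an arbitrary element $w \in \m^2$; I want to exhibit a superfiltered $A_\infty$-deformation of $E$ whose disc potential is $w$. The obvious candidate is the one constructed in this section: form the filtered matrix factorisation $\mfE_0$ of $w$ as in \cref{sscMatrixFactorisations}, let $\mathcal{B}_0 = \eend_{\mffilt}(\mfE_0)$ be its endomorphism dg-algebra, and let $\mathcal{B}_0^\mathrm{min}$ be the minimal model supplied by Markl's construction. By \cref{lemB0min}, $\mathcal{B}_0^\mathrm{min}$ is a strictly unital superfiltered $A_\infty$-deformation of $E$, so it defines a class on the left-hand side of \eqref{eqMainBijection}, and by \cref{propDiscPotential} its disc potential is precisely $w$. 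Hence the map $\mathcal{A} \mapsto \Po$ carries the class of $\mathcal{B}_0^\mathrm{min}$ to the class of $w$.

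Since $w$ ranged over all of $\m^2$, and the above applies verbatim to any representative of any $\sim_d$-equivalence class, this proves surjectivity of \eqref{eqMainBijection} for every $d \in \{0, 1, 2, \dots, \infty\}$. There is genuinely nothing more to do here: all the content is in \cref{propDiscPotential}, together with the verification in \cref{lemB0min} that $\mathcal{B}_0^\mathrm{min}$ really is a legitimate superfiltered deformation, so no step presents an obstacle. (We note in passing that well-definedness of \eqref{eqMainBijection}, i.e.\ that $d$-equivalent deformations have $d$-equivalent disc potentials, was already observed in \cref{sscFilteredDeformations} via the change of variables $f_\F$, and is logically independent of the surjectivity statement established here.)
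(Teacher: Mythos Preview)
Your proposal is correct and is exactly the argument the paper intends: the corollary is stated with a bare $\qed$ immediately after \cref{propDiscPotential}, and your write-up simply unpacks that ``immediate consequence'' by invoking \cref{lemB0min} and \cref{propDiscPotential}. There is no difference in approach.
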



\section{The matrix factorisation $\mfE$}
\label{secmfE}

For \cref{Theorem1} it remains to prove injectivity of \eqref{eqMainBijection}, and for this recall the strategy outlined in \cref{sscOutline}.  Given a superfiltered $A_\infty$-deformation $\calA = (\calA, (\mu^k)_{k \geq 2})$ of $E$ with disc potential $\Po \in \m^2 \subset R$ we shall build a filtered matrix factorisation $\mfE$ of $\Po$, and an $A_\infty$-homomorphism $\F$ from $\calA$ to $\calB = \eend_{\mffilt}(\mfE)$, using the localised mirror construction.  Letting $\mfE_0$ denote the matrix factorisation of $w$ considered above, with $w$ set to $\Po$, we shall then construct an $A_\infty$-homomorphism $\Psi$ from $\calB$ to $\calB_0 = \eend_{\mffilt}(\mfE_0)$, and combine this with the projection $\Pi$ from $\calB_0$ to its minimal model $\calB_0^\mathrm{min}$ to obtain an $\infty$-equivalence
\[
\calA \xrightarrow{\ \Pi \circ \Psi \circ \F \ } \calB_0^\mathrm{min}.
\]
Finally we deal with $d$-equivalences for $d < \infty$ by reducing to $d=\infty$ using formal diffeomorphisms.


\subsection{The localised mirror functor}
\label{sscLocalisedMirror}

Fix then for the rest of the section such a superfiltered deformation $\calA$.  Instead of keeping track of the $A_\infty$-isomorphism $\gr \calA \to E$ that is part of the data of the deformation, we instead fix an identification of $\calA$ with $E$ which coincides with the given isomorphism at the graded level (as in \cref{exAinftyDef}).  The leading terms of the operations $\mu^k$ on $\calA$ then coincide with the standard operations on $E$.  Let $\vv \in \m \otimes V$ be as in \cref{defDiscPot}, so its disc potential $\Po$ is given by
\[
\Po = \sum_k \mu^k(\vv, \dots, \vv),
\]
after extending the $\mu^k$-operations $R$-multilinearly.

In this subsection we construct, following Cho--Hong--Lau \cite[Section 2.3]{ChoHongLauLMF}, the filtered matrix factorisation $\mfE \in \mffilt(R, \Po)$ and a strictly unital superfiltered $A_\infty$-homomorphism
\[
\F : \calA \to \calB \coloneqq \eend_{\mffilt}(\mfE)
\]
over $R_0$, of degree $0$.  Note we use different $A_\infty$-conventions from \cite{ChoHongLauLMF} so our formulae have different signs.  Before getting into the construction we introduce modified degree $1$ operations
\[
\muzvv^k : \calA[1]^{\otimes k} \to R \otimes \calA[1],
\]
defined by extending the $\mu^k$ $R$-multilinearly again and setting
\[
\muzvv^k (a_k, \dots, a_1) = \sum_{l\geq 0} \mu^{k+l}(a_k, \dots, a_1, \vv, \dots, \vv).
\]
(The subscript $\mathbf{0},\vv$ indicates that we are summing over insertions of $\vv$ after the rightmost input, $a_1$, and over insertions of $0$ before and between all other inputs.  Of course, summing over insertions of $0$ does nothing, but later we will need to consider the case where we sum over insertions of $\vv$ before, after, and between all inputs, for which we will simply use a subscript $\vv$.)  By applying the $A_\infty$-relations to $a_k, \dots, a_1, \vv, \dots, \vv$, and using strict unitality, we obtain
\begin{multline}
\label{eqmvvRelations}
\sum_{i \geq 1,j} (-1)^{\maltese_i} \muzvv^{k-j+1}(a_k, \dots, a_{i+j+1}, \mu^j(a_{i+j}, \dots, a_{i+1}), a_i, \dots, a_1) \\ + \sum_{j\geq 1} \muzvv^{k-j+1} (a_k, \dots, a_{j+1}, \muzvv^j(a_j, \dots, a_1)) = \begin{cases} -\mu^2(a_1, \Po) = -\Po a_1 & \text{if } k=1 \\ 0 & \text{if } k > 1,\end{cases}
\end{multline}
where $\maltese_i$ denotes $|a_1| + \dots + |a_i| - i$ as usual.

With this in hand, we define $\mfE$ to have underlying superfiltered module $E_R = R \otimes E$ (as in \cref{sscDefE0}), and squifferential $\diff_\mfE$ given by $\diff_{\mfE}a = (-1)^{|a|} \muzvv^1(a)$ for all $a$.

\begin{lem}[{\cite[Theorem 2.19]{ChoHongLauLMF}}]
This $\mfE$ is indeed an object of $\mffilt(R, \Po)$.
\end{lem}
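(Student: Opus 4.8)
The plan is to check the four conditions required for $\mfE$ to be an object of $\mffilt(R, \Po)$: that $E_R$ is a finite-rank free $\Z/2$-graded $R$-module, that it is superfiltered, that $\diff_\mfE$ is an $R$-linear endomorphism which is superfiltered of degree $1$, and that $\diff_\mfE^2 = \Po\,\id_{E_R}$. The first two are immediate, since $E_R = R \otimes \Lambda V$ is free of rank $2^n$ and carries the $\Z/2$-grading and superfiltration induced by the standard $\Z$-grading on $E = \Lambda V$, exactly as for $\mfE_0$ in \cref{sscMatrixFactorisations}. For $\diff_\mfE$ itself: it is $R$-linear because $\muzvv^1$ is by construction the $R$-multilinear extension of the operations $\mu^k$, and it has degree $1$ mod $2$ because each $\mu^m$ does, so $\muzvv^1(a) = \sum_{l\geq 0}\mu^{1+l}(a,\vv,\dots,\vv)$ has degree $|a|+1$ and multiplication by the scalar $(-1)^{|a|}$ leaves this unchanged.

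Next I would verify that $\diff_\mfE$ is superfiltered of degree $1$. Since $\vv \in \m \otimes V$ lies in filtration level $1$ of $E_R$, and each $\mu^{1+l}$ is superfiltered in such a way that it drops total filtration by $(1+l)-2 = l-1$ relative to the filtration-preserving normalisation, the $l$ filtration levels contributed by the inserted copies of $\vv$ are exactly cancelled by this drop; hence $\mu^{1+l}(a,\vv,\dots,\vv) \in F^{p+1}E_R$ whenever $a \in F^p E_R$, and summing over $l$ shows $\diff_\mfE$ raises filtration by exactly $1$. That the series defining $\muzvv^1$ makes sense is because $\vv \in \m \otimes V$ forces the $l$-th term into $\m^l \otimes E$ and $R$ is $\m$-adically complete — this is the same convergence already used to define $\Po$ in \cref{defDiscPot}.

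The substantive point is $\diff_\mfE^2 = \Po\,\id_{E_R}$, but the hard work here has already been done in deriving the relations \eqref{eqmvvRelations}. For homogeneous $a$, applying $\diff_\mfE b = (-1)^{|b|}\muzvv^1(b)$ twice and using $|\muzvv^1(a)| = |a|+1$ together with the $R$-linearity of $\muzvv^1$ gives
\[
\diff_\mfE(\diff_\mfE a) = (-1)^{|a|+1}\muzvv^1\bigl((-1)^{|a|}\muzvv^1(a)\bigr) = -\,\muzvv^1\bigl(\muzvv^1(a)\bigr).
\]
Specialising \eqref{eqmvvRelations} to $k=1$ with the single input $a_1 = a$: the first sum is empty, since there is no room for a nested operation $\mu^j$ ($j\geq 2$) on a single input; the second sum reduces to the one term $\muzvv^1(\muzvv^1(a))$; and the right-hand side is $-\Po a$. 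Hence $\muzvv^1(\muzvv^1(a)) = -\Po a$, so $\diff_\mfE^2 a = \Po a$, and this extends $R$-linearly to $\diff_\mfE^2 = \Po\,\id_{E_R}$. Thus the only place anything nontrivial happens is the identity \eqref{eqmvvRelations} itself; given it, the rest is routine filtration- and sign-bookkeeping, and I do not anticipate a genuine obstacle.
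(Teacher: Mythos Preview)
Your proof is correct and follows exactly the paper's approach: the paper dispatches the superfilteredness of $\diff_\mfE$ in one line (since the $\mu^k$ are superfiltered of degree $1$) and then reduces $\diff_\mfE^2 = \Po\,\id_{E_R}$ to the $k=1$ case of \eqref{eqmvvRelations}, which is precisely what you do. Your version simply spells out the filtration bookkeeping and the sign computation in more detail than the paper's two-sentence proof.
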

\begin{proof}
The map $\diff_\mfE$ is superfiltered of degree $1$ because the operations $\mu^k$ on $\calA$ are superfiltered of degree $1$.  It remains to check that $\diff_\mfE^2 = \Po$, which amounts to $-\muzvv^1(\muzvv^1(a)) = \Po a$ for all $a$, and this is just the $k=1$ case of \eqref{eqmvvRelations}.
\end{proof}

\begin{rmk}
The special case where $\calA$ is a Clifford algebra, with vanishing higher operations, may be familiar to the reader: in this situation the lemma simply says that wedging with (minus) the Euler vector field gives a matrix factorisation of the defining quadratic form.
\end{rmk}

The next task is to define the $A_\infty$-algebra map $\F : \calA \to \calB = \eend_{\mffilt}(\mfE)$.  For each $k \geq 1$ we thus need to give a degree $0$ map $\F^k : \calA[1]^{\otimes k} \to \calB[1]$.  Recalling that $\calA$ is identified with $E$, we have the following identifications of $R_0$-modules
\[
\calB = \End_R(E_R) = \Hom_{R_0}(E, E_R) = \Hom_{R_0}(\calA, R \otimes \calA) = \Hom_{R_0}(\calA[1], R \otimes \calA[1]).
\]
We may thus express $\F^k$ as a degree $1$ map
\[
\calA[1]^{\otimes k} \otimes \calA[1] \to R \otimes \calA[1].
\]
In this laguage, and still following \cite[Section 2.3]{ChoHongLauLMF} (modulo the change of conventions), we define
\begin{equation}
\label{eqLocalisedMirrorMap}
\F^k(a_k, \dots, a_1)(a_0) = (-1)^{|a_0|} \muzvv^{k+1}(a_k, \dots, a_1, a_0).
\end{equation}

\begin{lem}[{\cite[Theorem 2.19]{ChoHongLauLMF}}]
\label{lemPhiProperties}
This $\F$ is indeed an $A_\infty$-algebra map (over $R_0$), and is strictly unital and superfiltered of degree $0$.
\end{lem}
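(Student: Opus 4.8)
The plan is to verify the three claims in turn, obtaining the $A_\infty$-homomorphism property directly from the identities \eqref{eqmvvRelations}; this is the argument of \cite[Theorem 2.19]{ChoHongLauLMF}, translated to the sign conventions of \cref{sscSetup}. Superfilteredness is the easiest part: by \eqref{eqLocalisedMirrorMap} each $\F^k(a_k,\dots,a_1)$ is assembled from the operations $\mu^{k+1+l}$ of $\mathcal{A}$, which are superfiltered of degree $1$, together with insertions of $\vv$, which has $\m$-order $1$ and exterior degree $1$ and hence is superfiltered of degree $0$ for the grading on $E_R$; tracking the shift $[1]$ one sees that $\F^k\colon\mathcal{A}[1]^{\otimes k}\to\mathcal{B}[1]$ is superfiltered of degree $0$, where $\mathcal{B}=\eend_{\mffilt}(\mfE)$ is filtered by taking $F^p\mathcal{B}$ to consist of the endomorphisms sending $F^q\mfE$ into $F^{q+p}\mfE$ for all $q$. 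This is the same bookkeeping that showed $\diff_\mfE=\pm\muzvv^1$ to be superfiltered of degree $1$. Strict unitality is also quick: if some input of $\muzvv^{k+1}$ in \eqref{eqLocalisedMirrorMap} equals $1_E$, then strict unitality of $\mathcal{A}$ kills every summand $\mu^{k+1+l}(\dots,1_E,\dots)$ unless $k+1+l=2$, forcing $k=1$ and $l=0$; hence $\F^k$ annihilates a unit input for $k\geq 2$, while $\F^1(1_E)(a_0)=(-1)^{|a_0|}\mu^2(1_E,a_0)=a_0$, that is, $\F^1(1_E)=\id_{E_R}=1_{\mathcal{B}}$.

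The heart of the matter is the $A_\infty$-homomorphism equation. I would fix inputs $a_k,\dots,a_1$ and evaluate both sides on an arbitrary $a_0\in\mathcal{A}$. On the right-hand side, $\mathcal{B}$ is a dg-algebra regarded as a formal $A_\infty$-algebra via \eqref{eqdgAsAinfinity}, so only the $r=1$ and $r=2$ terms contribute; unwinding $\mu^1_{\mathcal{B}}$ through the differential on $\eend_{\mffilt}(\mfE)$ and $\diff_\mfE=\pm\muzvv^1$, unwinding $\mu^2_{\mathcal{B}}$ as composition of endomorphisms, and then applying \eqref{eqLocalisedMirrorMap} throughout, every resulting term becomes an expression of the form $\muzvv^{p}(\dots,\muzvv^{q}(\dots,a_0))$. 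On the left-hand side, \eqref{eqLocalisedMirrorMap} rewrites each term as $\muzvv^{k-j+2}(a_k,\dots,\mu^j(a_{i+j},\dots,a_{i+1}),\dots,a_1,a_0)$. Collecting everything, the full equation evaluated on $a_0$ is precisely the identity \eqref{eqmvvRelations} for the string $(a_k,\dots,a_1,a_0)$ of length $k+1\geq 2$: the inner-$\mu^j$ terms of \eqref{eqmvvRelations} are the $\F^{k-j+1}(\dots,\mu^j(\dots),\dots)$ terms on the left of the homomorphism equation; the $2\leq j\leq k$ terms of the second sum of \eqref{eqmvvRelations} are the $\mu^2_{\mathcal{B}}$-compositions $\F^{k-j+1}\circ\F^{j-1}$ on the right; and the $j=1$ and $j=k+1$ terms are the two halves $\F^k\circ\diff_\mfE$ and $\diff_\mfE\circ\F^k$ of $\mu^1_{\mathcal{B}}\circ\F^k$. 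Since $k+1>1$ the right-hand side of \eqref{eqmvvRelations} vanishes, so the homomorphism equation holds. The case $k=1$ is the degenerate instance: there $\mu^1_{\mathcal{A}}=0$, so the equation merely asserts that $\F^1(a_1)$ is a $\mu^1_{\mathcal{B}}$-cocycle, which is the two-input case of \eqref{eqmvvRelations}.

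The only real obstacle is the sign-bookkeeping: one must reconcile the $(-1)^{\maltese_i}$ factors of the $A_\infty$- and homomorphism equations with the $(-1)^{|a_0|}$ of \eqref{eqLocalisedMirrorMap}, the $(-1)^{|a|}$ relating $\diff_\mfE$ to $\muzvv^1$, and the $(-1)^{|a_1|}$ and $(-1)^{|f|}$ of \eqref{eqdgAsAinfinity} and of the differential on $\eend_{\mffilt}(\mfE)$. I would handle this by first pinning down the degree of $\F^k(a_k,\dots,a_1)$ as an element of $\mathcal{B}$ in terms of the $|a_i|$, then substituting into the homomorphism equation and checking that each matched pair of terms carries the sign prescribed by \eqref{eqmvvRelations}. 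This computation is routine but unavoidable, and amounts to nothing more than the sign-translation of the proof in \cite{ChoHongLauLMF}.
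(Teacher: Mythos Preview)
Your proposal is correct and follows essentially the same route as the paper's proof: both evaluate the two sides of the $A_\infty$-homomorphism equation on an auxiliary element $a_0$, use that $\mathcal{B}$ is formal so only the $r=1$ and $r=2$ terms survive on the right, and then recognise the resulting identity as the length-$(k+1)$ case of \eqref{eqmvvRelations}. The paper simply carries out the sign manipulation explicitly rather than deferring it as bookkeeping, arriving at the intermediate expression \eqref{eqRelationToCheckLHS} and matching it term-by-term with the expanded right-hand side.
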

\begin{proof}
The fact that it is strictly unital and superfiltered of degree $0$ follow from strict unitality and superfilteredness of the $\mu^k$.  It remains to check the $A_\infty$-homomorphism relations, namely that for all tuples $a_1, \dots, a_k$ in $\calA$ (with $k \geq 1$) we have
\begin{multline}
\label{eqAinfinityHomCheck}
\sum_{i,j} (-1)^{\maltese_i} \F^{k-j+1}(a_k, \dots, a_{i+j+1}, \mu^j(a_{i+j}, \dots, a_{i+1}), a_i, \dots, a_1)
\\ = \sum_r \sum_{\substack{s_1, \dots, s_r \\ s_1+\dots+s_r = k}} \mu_\calB^r(\F^{s_r}(a_k, \dots, a_{k-s_r+1}), \dots, \F^{s_1}(a_{s_1}, \dots, a_1)).
\end{multline}
Applying the left-hand side to $a_0 \in \calA$ gives
\[
\sum_{i\geq 0,j} (-1)^{\maltese_i+|a_0|} \muzvv^{k-j+2}(a_k, \dots, a_{i+j+1}, \mu^j(a_{i+j}, \dots, a_{i+1}), a_i, \dots, a_0),
\]
which by \eqref{eqmvvRelations} we can rewrite as
\begin{equation}
\label{eqRelationToCheckLHS}
\sum_{j\geq 0} \muzvv^{k-j+1} (a_k, \dots, a_{j+1}, \muzvv^{j+1}(a_j, \dots, a_0)).
\end{equation}
The right-hand side of \eqref{eqAinfinityHomCheck}, meanwhile, only has terms with $r=1$ or $2$ (since $\calB$ has vanishing higher operations), so applying it to $a_0$ we get
\begin{multline*}
(-1)^{|a_0|}\muzvv^1 (\F^k(a_k, \dots, a_1)(a_0)) - (-1)^{|a_0|}\F^k(a_k, \dots, a_1)(\muzvv^1 (a_0)) \\+ \sum_{j=1}^{k-1} (-1)^{\maltese_j +1} \F^{k-j}(a_k, \dots, a_{j+1}) \circ \F^j(a_j, \dots, a_1) (a_0).
\end{multline*}
Plugging in the definitions of $\diff_\mfE$ and $\F$, this becomes
\[
\muzvv^1(\muzvv^{k+1}(a_k, \dots, a_0)) + \muzvv^{k+1}(a_k, \dots, a_1, \muzvv^1(a_0))+ \sum_{j=1}^{k-1} \muzvv^{k-j+1}(a_k, \dots, a_{j+1}, \muzvv^{j+1}(a_j, \dots, a_0)),
\]
which is exactly \eqref{eqRelationToCheckLHS}.
\end{proof}


\subsection{Comparing $\mfE_0$ and $\mfE$}
\label{sscmfComparison}

The above construction of $\mfE$, and the construction of $\mfE_0$ from \cref{sscDefE0} with $w = \Po$, provides us with two objects in the category $\mffilt(R, \Po)$, both with underlying module $E_R$.  The squifferential on $\mfE_0$ is defined explicitly by $\diff_{\mfE_0}a = -(\vv \wedge a + \ww \contract a)$, whilst that on $\mfE$ depends on the $A_\infty$-operations on our given deformed algebra $\calA$.  Note that although $\diff_\mfE$ is complicated, its leading term is the leading term of
\[
\sum_k(-1)^{|\bullet|} \mu^{k+1}(\bullet, \vv, \dots, \vv),
\]
which is just $-\vv \wedge \bullet$.  This is because the leading terms of $\mu^3, \mu^4, \dots$ vanish and that of $\mu^2$ coincides with $\wedge$ (up to a sign twist), by our definition of a superfiltered $A_\infty$-deformation.  We deduce

\begin{lem}
\label{lemDifferentialLeadingTerms}
The leading terms of $\diff_\mfE$ and $\diff_{\mfE_0}$ are both $a \mapsto - \vv \wedge a$.\hfill$\qed$
\end{lem}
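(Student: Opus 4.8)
The plan is to compute the two leading terms separately, in each case splitting the squifferential into its ``top filtration level'' piece plus a remainder that strictly drops in the filtration, so that only the former contributes to the leading term. For $\diff_{\mfE_0}$ this is immediate: the formula $\diff_{\mfE_0}a = -(\vv\wedge a + \ww\contract a)$ already exhibits it as a sum of two pieces homogeneous for the $\Z$-grading on $E_R$ that induces the superfiltration. Wedging with $\vv\in V=E^1$ raises the filtration by exactly one, while contracting with $\ww\in\m\otimes_{R_0}V^\vee$ lowers the exterior degree by one (the scalar factors, lying in $\m\subset R$, are filtration-preserving), so $-\ww\contract\bullet$ \emph{lowers} the filtration; since $\diff_{\mfE_0}$ is superfiltered of degree $1$, its leading term is the degree-$(+1)$ summand $-\vv\wedge\bullet$.

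For $\diff_\mfE$ I would unwind $\diff_\mfE a=(-1)^{|a|}\muzvv^1(a)=(-1)^{|a|}\sum_{l\geq 0}\mu^{l+1}(a,\vv,\dots,\vv)$ and dispatch the summands one at a time. The $l=0$ term is $(-1)^{|a|}\mu^1(a)=0$, since $\mathcal{A}=(E,(\mu^k)_{k\geq 2})$ carries no differential. The $l=1$ term is $(-1)^{|a|}\mu^2(a,\vv)$, and since $\gr\mu^2$ is the undeformed product bearing the sign of \eqref{eqdgAsAinfinity}, a short Koszul-sign manipulation (using $a\wedge\vv=(-1)^{|a|}\vv\wedge a$ as $|\vv|=1$) identifies its leading term with $-\vv\wedge a$. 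For $l\geq 2$ the operation $\mu^{l+1}$ has $l+1\geq 3$, so $\gr\mu^{l+1}=0$ because $\mathcal{A}$ is a deformation of the \emph{formal} $A_\infty$-structure on $E$; inserting the $\vv$'s --- each of filtration level $1$, with $R$-coefficients lying in $\m$ and hence irrelevant to the filtration grading --- forces $\mu^{l+1}(a,\vv,\dots,\vv)$ strictly below the level occupied by $\vv\wedge a$, so these terms are invisible to the leading term. Combining the three cases gives that the leading term of $\diff_\mfE$ is $-\vv\wedge\bullet$ as well, matching $\diff_{\mfE_0}$.

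The argument is essentially bookkeeping, so I do not anticipate a real obstacle. The one step that warrants care is the last one --- checking that the $l\geq 2$ contributions to $\muzvv^1$ genuinely drop in the filtration --- and this is precisely where one must use that $\mathcal{A}$ deforms the \emph{formal} (not merely some) $A_\infty$-structure on $E$, together with the observation that the coefficients of $\vv$ lie in $\m$ and so contribute nothing to the filtration grading of $E_R$. Beyond that, everything is a matter of matching the signs from \eqref{eqdgAsAinfinity} and the shift conventions for superfiltered modules from \cref{sscSetup}.
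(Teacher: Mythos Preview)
Your proposal is correct and follows essentially the same approach as the paper: the paper's argument (given in the paragraph immediately preceding the lemma, which is then stated with an immediate $\qed$) likewise observes that the leading terms of $\mu^3(a,\vv,\vv),\mu^4(a,\vv,\vv,\vv),\dots$ vanish because $\mathcal{A}$ deforms the \emph{formal} structure on $E$, and then uses the sign convention \eqref{eqdgAsAinfinity} to identify $(-1)^{|a|}\mu^2(a,\vv)$ with $-\vv\wedge a$. Your treatment is simply a more explicit unpacking of the same computation, including the (trivial) $l=0$ case and the filtration check for $\diff_{\mfE_0}$.
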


The endomorphism dg-algebras $\calB$ and $\calB_0$ of $\mfE$ and $\mfE_0$ both have the same underlying $R$-algebra, namely $\End_R(E_R)$; it is only the differentials which are different, and we have just seen that even these agree to leading order.  Inspired by \cite[Theorem 9.1]{ChoHongLauTorus}, our goal in this subsection is to prove

\begin{prop}
\label{propmfEqual}
The obvious `identity' map $\calB \to \calB_0$ can be corrected (by adding lower order terms) to a unital dg-algebra isomorphism $\psi$, which is superfiltered of degree $0$.  This $\psi$ can thus be viewed as the first term in a strictly unital superfiltered $A_\infty$-homomorphism $\Psi$ over $R$, with $\Psi^{>1} = 0$.
\end{prop}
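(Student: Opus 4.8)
The plan is to first produce an isomorphism $\sigma\colon\mfE\to\mfE_0$ in $\mffilt(R,\Po)$ whose leading term is $\id_{E_R}$, and then to take $\psi$ to be conjugation by $\sigma$. Since $\mfE$ and $\mfE_0$ have the same underlying module $E_R$, such a $\sigma$ is nothing but an $R$-linear endomorphism of $E_R$, superfiltered of degree $0$, satisfying $\diff_{\mfE_0}\circ\sigma=\sigma\circ\diff_\mfE$. If its leading term is $\id_{E_R}$ then $\sigma-\id_{E_R}$ strictly lowers the (finite) filtration on $E_R$ and is therefore nilpotent, so $\sigma$ is automatically invertible with superfiltered inverse, and both $\sigma$ and $\sigma^{-1}$ fix $1_{E_R}$. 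Conjugation $f\mapsto\sigma f\sigma^{-1}$ is then a unital, superfiltered, degree-$0$ dg-algebra isomorphism $\mathcal{B}\to\mathcal{B}_0$---it intertwines the two differentials because $\sigma^{-1}$ is also a chain map---and since $\sigma=\id_{E_R}+(\text{filtration-lowering})$ it differs from the obvious identification $\mathcal{B}\to\mathcal{B}_0$ only by filtration-lowering terms. Taking $\psi$ to be this conjugation, and $\Psi$ the $A_\infty$-homomorphism with $\Psi^1=\psi$ and $\Psi^{>1}=0$, will give the proposition.

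To construct $\sigma$ I would induct on the $\Z$-grading of $E_R=R\otimes E$ (in which $\Lambda^dV$ sits in degree $d$), building the homogeneous components of $\sigma=\id_{E_R}+\sigma^{(-2)}+\sigma^{(-4)}+\cdots$ one at a time. Write $\partial\coloneqq-\vv\wedge\bullet$ for the common leading term of the two squifferentials (\cref{lemDifferentialLeadingTerms}), and decompose $\diff_\mfE=\partial+\diff_\mfE^{(-1)}+\diff_\mfE^{(-3)}+\cdots$ and $\diff_{\mfE_0}=\partial+\diff_{\mfE_0}^{(-1)}$ into homogeneous parts. The equation $\diff_{\mfE_0}\sigma=\sigma\diff_\mfE$ is automatic in degree $+1$ and, in degree $-(2j+1)$, reads $[\partial,\sigma^{(-2j-2)}]=O_{j+1}$ for an explicit $O_{j+1}$ built from the previously-constructed $\sigma^{(-2i)}$ ($i\le j$) and the lower homogeneous parts of the two squifferentials. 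Here $[\partial,-]$ is precisely the differential of the dg-algebra $\Edg$ of \cref{lemiotaqis}, whose cohomology is $\Ext^*_R(R_0,R_0)\cong E$, concentrated in non-negative degrees; so once $O_{j+1}$ is known to be a $[\partial,-]$-cocycle it automatically represents the zero class in $\rH^{-(2j+1)}(\Edg)=0$, and the equation can be solved for $\sigma^{(-2j-2)}$. The induction terminates because $E_R$ is concentrated in degrees $0,\dots,n$, and the resulting $\sigma$ is superfiltered of degree $0$ and fixes $1_{E_R}$, so the reduction of the previous paragraph applies.

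The one genuine computation is the cocycle property of the obstructions, and I expect this---rather than any deep issue---to be the only delicate point. It follows from the standard argument: setting $\sigma^{[\le j]}\coloneqq\id_{E_R}+\sigma^{(-2)}+\cdots+\sigma^{(-2j)}$, the (odd-degree) endomorphism $g\coloneqq\diff_{\mfE_0}\sigma^{[\le j]}-\sigma^{[\le j]}\diff_\mfE$ has homogeneous parts only in degrees $\le-(2j+1)$ by the inductive hypothesis, with leading part $\pm O_{j+1}$, and it satisfies $\diff_{\mfE_0}g+g\diff_\mfE=\diff_{\mfE_0}^2\sigma^{[\le j]}-\sigma^{[\le j]}\diff_\mfE^2=\Po\,\sigma^{[\le j]}-\sigma^{[\le j]}\,\Po=0$, using $\diff_\mfE^2=\diff_{\mfE_0}^2=\Po\,\id_{E_R}$ and centrality of $\Po$ in $R$. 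Extracting the degree-$(-2j)$ part of $\diff_{\mfE_0}g+g\diff_\mfE=0$ leaves only $[\partial,(\text{leading part of }g)]$, giving $[\partial,O_{j+1}]=0$ as required. Everything else---tracking homogeneous degrees and signs, and checking superfilteredness and strict unitality of $\sigma$ and of the conjugation---is routine; the substance of the argument is simply that the obstructions live in $\Ext^{<0}_R(R_0,R_0)=0$, so the construction is unobstructed. (This is, in effect, running by hand the spectral sequence of the superfiltered complex $\hhom(\mfE,\mfE_0)$, whose first page is controlled by $\Edg$.)
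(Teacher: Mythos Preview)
Your proposal is correct and is essentially the same argument as the paper's. The paper packages the construction of the corrected identity via the spectral sequence of the auxiliary complex $C^T$ built from $C=\hom_{\mffilt}(\mfE_0,\mfE)$ (\cref{defTcomplex}), observing that $\id_{E_R}$ survives because its successive differentials land in groups that vanish already on the first page; you unwind this into an explicit degree-by-degree induction, with the obstruction $O_{j+1}$ being a cocycle (your computation using $\diff_\mfE^2=\diff_{\mfE_0}^2=\Po\,\id$) lying in $\rH^{<0}(\Edg)=0$---exactly the same vanishing. Both then finish by conjugating, and you even note the equivalence in your final parenthetical.
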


The key ingredient is

\begin{lem}
\label{lemidERCocycle}
The map $\id_{E_R}$ can be corrected to a cocycle $i$ in $\hom^0_{\mffilt}(\mfE, \mfE_0)$
\end{lem}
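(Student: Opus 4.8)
The plan is to mimic the construction of the corrected cocycles $f_i$ from \cref{sscEndmfE0}, but now for the ``identity'' morphism between the two matrix factorisations rather than for a generator. We are looking for a map $i \in \hom^0_{\mffilt}(\mathcal{B},\mathcal{B}_0)$, i.e.\ an $R$-linear superfiltered degree-$0$ endomorphism of $E_R$, that intertwines the two squifferentials: $\diff_{\mfE_0}\circ i = i\circ \diff_\mfE$. (The cocycle condition in $\hom_{\mffilt}(\mathcal{B},\mathcal{B}_0)$ unwinds to exactly this, since an element of $\hom(\mfE,\mfE_0)$ is a map of underlying modules and the differential on the hom-complex is $\diff_{\mfE_0}\circ f - (-1)^{|f|}f\circ\diff_\mfE$.) By \cref{lemDifferentialLeadingTerms} the leading terms of $\diff_\mfE$ and $\diff_{\mfE_0}$ agree, so $\id_{E_R}$ intertwines them to leading order; the task is to build a superfiltered degree-$0$ correction $i = \id_{E_R} + (\text{lower order})$ that makes the intertwining exact.

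First I would set up the obstruction inductively with respect to the filtration. Write $\delta \coloneqq \diff_\mfE - \diff_{\mfE_0}$; this is superfiltered of degree $1$ but its leading (degree-$1$) term vanishes, so $\delta$ strictly lowers the ``excess'' by at least two, i.e.\ $\delta$ is superfiltered of degree $-1$. For a candidate $i = \id_{E_R} + h$ with $h$ superfiltered of degree $-2$ (strictly lowering filtration), the intertwining condition $\diff_{\mfE_0} i - i\,\diff_\mfE = 0$ becomes
\[
\diff_{\mfE_0}h - h\,\diff_\mfE = \delta.
\]
I would solve this order by order in the filtration: on the first page the relevant operator is $[-\vv\wedge\bullet,\ \bullet\,]$ acting on (filtered pieces of) $\End_R(E_R) = \Edg$, and I would invoke the homotopy $\eta$ of \cref{lemeta} — or rather the fact, established in the proof of \cref{lemiotaqis} and \cref{lemeta}, that the complex $(\ker\pi, \diff_1)$ is acyclic and $\Edg$ has cohomology $E$ — to split off the obstruction. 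Concretely, writing $h = h_1 + h_3 + \cdots$ and $\delta = \delta_1 + \delta_3 + \cdots$ by internal $\Z$-degree (with $\delta_1 = 0$), one builds $h_{-1}$ so that $[-\vv\wedge\bullet, h_{-1}]$ cancels $\delta_{-1}$ modulo higher order, using that the relevant obstruction group on the first page vanishes on the relevant piece; then one proceeds inductively, with each new obstruction being a cocycle (by the $A_\infty$/squifferential relations $\diff_\mfE^2 = \Po = \diff_{\mfE_0}^2$) that lives in an acyclic part of $\Edg$ and hence cobounds. A clean way to package this, matching the paper's style, is again via the $T$-variable construction of \cref{defTcomplex}: form $\mathcal{B}^T$ and $\mathcal{B}_0^T$, observe the maps agree on the zeroth page, and use \cref{lemSSisom} in reverse — but here I actually need a chain map, not just a quasi-isomorphism, so I would run the explicit inductive cancellation rather than a pure spectral-sequence existence argument.

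The main obstacle I anticipate is \textbf{checking that the successive obstruction classes actually vanish}, i.e.\ land in the acyclic part of $\Edg$. The worry is that the naive obstruction lives in $\hom^1(\Edg,\Edg)$ and this is $E^1 = V$, which is \emph{not} zero; so one must verify that the particular cocycle arising at each stage is exact. This is where I expect the real content to sit, and I would handle it exactly as the $f_i$ computation does: the obstruction is $[-\vv\wedge\bullet, h_{\le -3}] + (\text{terms from }\delta)$ applied to something, and one checks by a direct commutator manipulation — using $[v_l\wedge\bullet, v_m^\vee\contract\bullet] = \delta_{lm}\id_{E_R}$ and the relation $\ww\contract\vv = \Po$ — that it is proportional to $\id_{E_R}$ times an element of the appropriate ideal, hence visibly in the image of a contraction operator and thus exact. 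Once that is in place, the side conditions are not needed (we only want a cocycle, not a homotopy equivalence yet), and the remaining claims — superfilteredness of degree $0$, $R$-linearity, and that the leading term is $\id_E$ — are immediate from the construction.
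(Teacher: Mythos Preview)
Your inductive approach is essentially the spectral-sequence argument of the paper, unwound by hand, and it would work.  Two points of comparison and correction are worth making.

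First, you dismiss the spectral-sequence route on the grounds that you ``need a chain map, not just a quasi-isomorphism''.  This is a misconception.  The paper forms $C^T$ for $C = \hom_{\mffilt}(\mfE,\mfE_0)$ and simply observes that the class of $\id_{E_R}$ on page~1 is a \emph{permanent cycle}: the $d_r$-differential sends it from position $(0,0)$ to position $(r,1-r)$, which on page~1 is $T^r \otimes E^{1-2r} = 0$ for all $r\geq 1$.  In a spectral sequence whose filtration is degreewise bounded, a permanent cycle on page~1 is precisely an element that lifts to a genuine cocycle in the original complex whose leading term is the given class.  So the spectral sequence \emph{does} produce the cocycle $i$ directly, with no further work.

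Second, your ``main obstacle'' --- that the obstruction might land in $E^1 = V$, which is nonzero --- is a degree miscount.  In your inductive scheme, the error $\delta = \diff_{\mfE} - \diff_{\mfE_0}$ has $\Z/2$-degree~1 but, since the leading (\,$\Z$-degree~1) parts cancel, its $\Z$-graded components sit in degrees $-1, -3, \dots$.  The first obstruction is the $d_1$-cohomology class of $\delta_{-1}$ in $\rH^{-1}(E_{\mathrm{dg}}) = E^{-1} = 0$, and at each subsequent stage the obstruction lives in $E^{1-2r}$ for $r\geq 1$, which is again zero.  So the obstructions vanish for trivial degree reasons and no commutator gymnastics analogous to the $f_i$ computation are needed.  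Once you see this, your induction becomes immediate --- and is exactly the content of the paper's two-line spectral-sequence argument.
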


\begin{proof}[Proof of \cref{propmfEqual}]
Assuming \cref{lemidERCocycle}, one can inductively write down the two-sided inverse to $i$, which is automatically a cocycle of the form $\id_{E_R}+\text{(lower order terms)}$.  The required map $\psi$ is then $a \mapsto iai^{-1}$, where the multiplication takes place in the common underlying algebra.
\end{proof}

It remains to prove \cref{lemidERCocycle}, which we will do via a spectral sequence (cf.~\cite[Theorem 9.1]{ChoHongLauTorus}).

\begin{proof}[Proof of \cref{lemidERCocycle}]
Consider the superfiltered chain complex $C \coloneqq \hom_{\mffilt} (\mfE, \mfE_0)$, and the $\Z$-graded filtered complex $C^T$ built from it as in \cref{sscSetup}.  The first page of the associated spectral sequence is $R_0[T^{\pm 1}] \otimes \rH^*(C, \diff_1)$, where $\diff_1$ is the leading term of $\diff_C$.  Since $\diff_\mfE$ and $\diff_{\mfE_0}$ both have leading term $-\vv \wedge \bullet$, the complex $(C, \diff_1)$ coincides with $E_\mathrm{dg}$ studied in \cref{sscEdg}, so we have $\rH^*(C, \diff_1) \cong \rH^*(E_\mathrm{dg}) \cong E$ as $R_0$-modules.

The map $\id_{E_R}$ in $C$ is a $\diff_1$-cocycle and corresponds to the cohomology class $1 \in E$.  We can view this element as lying in the zeroth column and zeroth row on the first page of the spectral sequence, and the claimed result is equivalent to its differential on each page being zero.  To see that this is indeed the case, note that on the $r$th page its differential lands in the $r$th column, $(1-r)$th row, and the group in this position is zero, even on the first page.
\end{proof}

\begin{rmk}
In principle the object $\mfE_0$ depends on the choice of $\ww$, but the argument used to prove \cref{propmfEqual} also shows that any two choices give rise to isomorphic objects in the category $Z^0\mffilt(R, w)$, whose morphisms are cocycles in $\mffilt(R, w)$ which are superfiltered of degree $0$.  Moreover, the isomorphism can be chosen to be $\id_{\End_R(E_R)}$ to leading order.
\end{rmk}

\begin{rmk}
One can view $R$ as a curved dg- (cdg-)algebra, with differential zero and curvature $w$.  The equivalence between $\mfE$ and $\mfE_0$ should then be a consequence of the fact that both are projective resolutions of $R/\m$ as a cdg-$R$-module (I thank an anonymous referee for this comment).  We prefer to give the direct argument above because of its explicitness.  For example, the cocycle $i$ in \cref{lemidERCocycle} appears in \cref{secMonotoneTori} as a `quantum change of variables'.
\end{rmk}


\subsection{Completing the proof of \cref{Theorem1}}

Putting everything together, we have superfiltered $A_\infty$-deformations $\calA$ and $\calB_0^\mathrm{min}$ of the exterior algebra $E$, and $A_\infty$-algebra maps over $R_0$
\[
\calA \xrightarrow{\ \F\ } \calB \xrightarrow{\ \Psi\ } \calB_0 \xrightarrow{\ \Pi\ } \calB_0^\mathrm{min}.
\]
Here $\calB$ is the endomorphism algebra of the matrix factorisation $\mfE$ and $\F$ is the map from the localised mirror functor; $\calB_0$ is the endomorphism algebra of the reference matrix factorisation $\mfE_0$ and $\Psi$ is the comparison isomorphism from \cref{propmfEqual}; and $\Pi$ is the projection from $\calB_0$ to its minimal model $\calB_0^\mathrm{min}$.  Our first goal is to show that $\calA$ is $\infty$-equivalent to $\calB_0^\mathrm{min}$, which depends on $\calA$ only through its potential.

\begin{prop}
\label{propCompositionEquivalence}
The map $\Pi \circ \Psi \circ \F: \calA \to \calB_0^\mathrm{min}$ is an $\infty$-equivalence.
\end{prop}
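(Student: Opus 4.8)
The plan is to verify directly that $G \coloneqq \Pi \circ \Psi \circ \F$ meets the three conditions in the definition of an $\infty$-equivalence. Two of them are immediate: as a composite of strictly unital $A_\infty$-homomorphisms over $R_0$ that are superfiltered of degree $0$, the map $G$ is again such, so in particular it respects the $\Z/2$-grading and filtration. Everything therefore reduces to computing leading terms: I must show $\gr G^1 = \id_E$ and $\gr G^k = 0$ for $k \geq 2$ (the condition that $G^1$ is an $R_0$-module isomorphism is then automatic). As a preliminary step I will use $\Psi^{>1} = 0$ to rewrite $(\Pi \circ \Psi)^k = \Pi^k \circ \psi^{\otimes k}$, so that
\[
G^k(a_k, \dots, a_1) = \sum_{r \geq 1}\ \sum_{\substack{s_1, \dots, s_r \geq 1 \\ s_1 + \dots + s_r = k}} \pm\, \Pi^r\bigl(\psi \circ \F^{s_r}(a_k, \dots), \dots, \psi \circ \F^{s_1}(\dots, a_1)\bigr).
\]

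The crucial input is the leading-term behaviour of the components $\F^s$. From \eqref{eqLocalisedMirrorMap}, the operator $\F^s(a_s, \dots, a_1)$ sends $a_0$ to $(-1)^{|a_0|}\sum_{l \geq 0}\mu^{s+1+l}(a_s, \dots, a_1, a_0, \vv, \dots, \vv)$. For $s = 1$ the $l = 0$ summand $(-1)^{|a_0|}\mu^2(a_1, a_0)$ has leading term $a_1 \wedge a_0$ and strictly dominates the $l \geq 1$ summands (which involve $\mu^{\geq 3}$), so $\gr \F^1$ is the map $a \mapsto a \wedge \bullet$ from $E$ to $E_\mathrm{dg} = \gr \mathcal{B}$, which is precisely $\gr \iota$. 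For $s \geq 2$ every summand involves an operation $\mu^{\geq 3}$ whose leading term vanishes because $\mathcal{A}$ deforms the \emph{formal} structure on $E$; since the inserted copies of $\vv$ each sit at the bottom filtration level (they have exterior degree $1$) they do not raise the input filtration count, while the vanishing leading term lowers the output by one relative to a general superfiltered degree-$1$ operation, so each summand, and hence $\F^s(a_s, \dots, a_1)$, lands one filtration level below where a superfiltered degree-$0$ map would generically put it. This is the same mechanism that makes the leading term of $\diff_\mfE$ equal to $-\vv \wedge \bullet$ in \cref{lemDifferentialLeadingTerms}, the $\m$-adic structure serving only to make the sums over $l$ converge. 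Hence $\gr \F^s = 0$ for $s \geq 2$. Combining this with $\gr \psi = \id_{E_\mathrm{dg}}$ — which holds since $\psi = \id_{E_R} + (\text{lower order})$ by \cref{propmfEqual} and $\gr \mathcal{B} = \gr \mathcal{B}_0 = E_\mathrm{dg}$ by \cref{lemDifferentialLeadingTerms} — gives $\gr(\psi \circ \F^1) = \gr \iota$, so that $\psi \circ \F^1 = \iota + (\text{strictly lower order})$, and $\gr(\psi \circ \F^s) = 0$ for $s \geq 2$.

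Assembling the pieces: for $k = 1$ only $\Pi^1(\psi \circ \F^1) = \pi \circ \psi \circ \F^1$ appears, and $\gr G^1 = \gr\pi \circ \gr\iota = \gr(\pi \circ \iota) = \id_E$ using $\pi \circ \iota = \id_E$ from \cref{lemeta}. For $k \geq 2$ I will run through the summands of the displayed formula. If some $s_i \geq 2$, then $\gr(\psi \circ \F^{s_i}) = 0$, so that input to $\Pi^r$ drops a filtration level; since $\Pi^r$ is superfiltered of degree $0$ and multilinear, the whole summand drops and contributes nothing to $\gr G^k$. The only other summand is the one with $r = k$ and all $s_i = 1$, namely $\Pi^k(\psi \circ \F^1(a_k), \dots, \psi \circ \F^1(a_1))$; writing $\psi \circ \F^1(a_i) = \iota(a_i) + c_i$ with each $c_i$ of strictly lower filtration and expanding multilinearly, the pure-$\iota$ term $\Pi^k(\iota(a_k), \dots, \iota(a_1))$ vanishes by \cref{lemPiProperties}, while every remaining term contains some $c_i$ and so again drops a filtration level. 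Thus $\gr G^k = 0$ for $k \geq 2$, which completes the verification that $G$ is an $\infty$-equivalence.

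I expect the main obstacle to be the bookkeeping underlying $\gr \F^s = 0$ for $s \geq 2$. Morally this is forced by $\mathcal{A}$ deforming the formal structure, but it needs care: the $\vv$-insertions, considered by themselves, push $A_\infty$-operations up the filtration, and one has to see precisely that this is exactly offset by the drop coming from the vanishing leading terms of the higher products. Once that is in hand, the rest is routine manipulation of superfiltered maps, the side conditions on $\iota$, $\pi$ and $\eta$ from \cref{lemeta}, and the vanishing of $\Pi^{\geq 2}$ on $(\im \iota)^{\otimes \bullet}$ from \cref{lemPiProperties}.
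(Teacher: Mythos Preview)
Your proposal is correct and follows essentially the same route as the paper's proof: both verify the $\infty$-equivalence conditions by computing leading terms, using $\gr\F^1(a)=a\wedge\bullet=\gr\iota(a)$, $\gr\F^{\geq 2}=0$, $\gr\psi=\id$, $\pi\circ\iota=\id_E$, and the vanishing of $\Pi^{\geq 2}$ on $(\im\iota)^{\otimes\bullet}$ from \cref{lemPiProperties}. Your decomposition $\psi\circ\F^1(a_i)=\iota(a_i)+c_i$ with $c_i$ strictly lower order makes explicit what the paper handles more tersely with its $\approx$ notation, but the logic is identical.
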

\begin{proof}
Recall that $\calB_0^\mathrm{min}$ has underlying module $E$, and we have also identified $\calA$ with $E$.  So, taking the perspective of \cref{exSuperfilteredEquivalence}, an $\infty$-equivalence $\chi$ is a strictly unital superfiltered $A_\infty$-homomorphism such that the leading term of $\chi^r$ is $\id_E$ for $r=1$ and vanishes for $r \geq 2$.  The map $\chi \coloneqq \Pi \circ \Psi \circ \F$ is automatically a strictly unital superfiltered $A_\infty$-homomorphism, since $\F$, $\Psi$ and $\Pi$ are so (see \cref{lemPhiProperties,propmfEqual,lemPiProperties,lemPiSU}).  It therefore remains to compute the leading term.

From \eqref{eqLocalisedMirrorMap} we have $\F^1(a_1)(a_0) = (-1)^{|a_0|}\muzvv^2(a_1, a_0)$ for all $a_0$ and $a_1$.  By an analogous argument to that used for \cref{lemDifferentialLeadingTerms}, we thus have
\[
\F^1(a_1)(a_0) \approx (-1)^{|a_0|} \mu^2(a_1, a_0) \approx a_1 \wedge a_0,
\]
where, as before, $\approx$ denotes equality of leading terms.  Similarly, for $k>1$ we have
\[
\F^k(a_k, \dots, a_1)(a_0) = (-1)^{|a_0|} \muzvv^{k+1}(a_k, \dots, a_0) \approx 0.
\]

Next, recall from \cref{sscmfComparison} that $\Psi^1$ is given by $\psi$, which is conjugation by the element $i$ in the algebra $\End_R(E_R)$ that underlies both $\calB$ and $\calB_0$.  This element is $\id_{E_R}$ to leading order, so combining this with the previous paragraph we obtain
\begin{equation}
\label{eqPsiPhi1}
(\Psi \circ \F)^1 (a) \approx a \wedge \bullet
\end{equation}
for all $a$ in $\calA = E$.  Meanwhile, $\Psi^k$ is defined to be zero for $k>1$, so (again using the previous paragraph) we have $(\Psi \circ \F)^k \approx 0$ for such $k$.

Turning now to $\Pi$, recall from \cref{sscEndmfE0} that $\Pi^1$ is given by $\pi$, which sends $f \in \End_R(E_R)$ to the reduction of $f(1)$ modulo $\m$.  Plugging this into \eqref{eqPsiPhi1} we obtain
\[
(\Pi \circ \Psi \circ \F)^1 (a) \approx a
\]
for all $a$.  Finally consider the leading term of $(\Pi \circ \Psi \circ \F)^k(a_k, \dots, a_1)$ for $k>1$.  Since the leading terms of the $(\Psi \circ \F)^{>1}$ vanish, the only contribution we need consider is that from
\[
\Pi^k \big((\Psi \circ \F)^1(a_k), \dots, (\Psi \circ \F)^1(a_1) \big) \approx \Pi^k(a_k \wedge \bullet, \dots, a_1 \wedge \bullet) \approx \Pi^k(\iota(a_k), \dots, \iota(a_1)),
\]
where $\iota$ is the map $E \to \End_R(E_R)$ from \cref{sscEndmfE0}.  By \cref{lemPiProperties} the right-hand side is zero.
\end{proof}

We deduce

\begin{cor}
\label{corMainThmInfty}
Any two superfiltered $A_\infty$-deformations of $E$ whose disc potentials are $\infty$-equivalent are themselves $\infty$-equivalent.  Combined with \cref{corSurjective}, this proves \cref{Theorem1} for $d=\infty$.
\end{cor}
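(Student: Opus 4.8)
The plan is to deduce this directly from \cref{propCompositionEquivalence}, with essentially no new work beyond reading off the definitions. First I would unwind what it means for two disc potentials to be $\infty$-equivalent: the required formal change of variables $f$ must satisfy $f = \id_V$ modulo $\m^{d+1}$ with $d = \infty$, and since $\bigcap_k \m^k = 0$ in the power series ring $R$ this forces $f = \id_V$; thus $\infty$-equivalence of disc potentials is simply equality. So I am reduced to showing that any two superfiltered $A_\infty$-deformations $\mathcal{A}_1$, $\mathcal{A}_2$ of $E$ with a common disc potential $\Po$ are $\infty$-equivalent.

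For this I would fix, once and for all, a single choice of the auxiliary data defining the reference algebra: put $w = \Po$ in \cref{sscMatrixFactorisations}, choose $\ww$ with $\ww \contract \vv = \Po$, and choose $\iota$, $\pi$, $\eta$ as in \cref{sscEndmfE0}, yielding $\mathcal{B}_0$ and its minimal model $\mathcal{B}_0^\mathrm{min}$. None of these choices refers to $\mathcal{A}_1$ or $\mathcal{A}_2$ --- they depend on the common potential $\Po$ alone --- so the resulting $\mathcal{B}_0^\mathrm{min}$ is literally one and the same $A_\infty$-algebra in both cases; only the intermediate objects $\mfE$, $\mathcal{B}$ and the maps $\F$, $\Psi$ change. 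Applying \cref{propCompositionEquivalence} to $\mathcal{A}_1$ and then to $\mathcal{A}_2$ produces $\infty$-equivalences $\mathcal{A}_1 \to \mathcal{B}_0^\mathrm{min}$ and $\mathcal{A}_2 \to \mathcal{B}_0^\mathrm{min}$, and since $\sim_\infty$ is an equivalence relation (symmetry being the only delicate point, and already established when the relation was introduced) I conclude $\mathcal{A}_1 \sim_\infty \mathcal{A}_2$.

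Finally, to obtain \cref{Theorem1} for $d = \infty$ I would combine this with \cref{corSurjective}: the map \eqref{eqMainBijection} is well-defined (via the change of variables $f_\F$ attached to any $\infty$-equivalence) and surjective by \cref{corSurjective}, and the claim just proved is exactly its injectivity, since $\Po_1 \sim_\infty \Po_2$ forces $\Po_1 = \Po_2$. Hence \eqref{eqMainBijection} is a bijection when $d = \infty$. I do not expect any genuine obstacle here: the entire content sits in \cref{propCompositionEquivalence} (and, behind it, \cref{thmB0Potential}); the only points that need a moment's care are the identification $\m^{\infty+1} = 0$ and the observation that the auxiliary data can be chosen uniformly in the $\mathcal{A}_i$.
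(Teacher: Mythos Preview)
Your argument is correct and follows the same route as the paper: both deformations are shown to be $\infty$-equivalent to a single $\mathcal{B}_0^\mathrm{min}$ depending only on the common disc potential, via \cref{propCompositionEquivalence}. You have simply spelled out in more detail the points the paper leaves implicit---that $\infty$-equivalence of potentials forces $\Po_1 = \Po_2$, and that the auxiliary choices defining $\mathcal{B}_0^\mathrm{min}$ depend only on $\Po$.
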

\begin{proof}
\Cref{propCompositionEquivalence} shows that both algebras are $\infty$-equivalent to the same $\calB_0^\mathrm{min}$.
\end{proof}

It is now a simple matter to complete the proof of \cref{Theorem1}.

\begin{prop}
\label{propMainThm}
\Cref{corMainThmInfty} holds with $\infty$ replaced by any $d$ in $\{0, 1, 2, \dots, \infty\}$.
\end{prop}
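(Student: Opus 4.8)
Surjectivity of \eqref{eqMainBijection} for every $d$ is \cref{corSurjective}, and the computation preceding the statement of \cref{Theorem1} shows the map is always well-defined, so only injectivity for $d$ finite remains. Let $\mathcal{A}_1$ and $\mathcal{A}_2$ be superfiltered $A_\infty$-deformations of $E$ with $\Po_1\sim_d\Po_2$, and fix a formal change of variables $f\colon V\to V$ with $\Po_1=\Po_2\circ f$ and $f\equiv\id_V$ modulo $\m^{d+1}$. The plan is to realise $f$ as a $d$-equivalence and then invoke the case $d=\infty$: I will produce a superfiltered $A_\infty$-deformation $\mathcal{A}_2^f$ of $E$ with disc potential $\Po_2\circ f$ together with a $d$-equivalence $\F\colon\mathcal{A}_2^f\to\mathcal{A}_2$. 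Granted this, $\mathcal{A}_2^f$ and $\mathcal{A}_1$ have the same disc potential, hence are $\infty$-equivalent by \cref{corMainThmInfty} and so in particular $d$-equivalent (an $\infty$-equivalence is a $d$-equivalence); composing with $\F$ and using that $\sim_d$ is an equivalence relation gives $\mathcal{A}_1\sim_d\mathcal{A}_2$, which together with \cref{corSurjective} finishes the theorem.

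To build $\F$ and $\mathcal{A}_2^f$ it suffices to work at the level of leading terms: I claim there is a strictly unital, superfiltered degree-$0$ $A_\infty$-automorphism $G$ of the \emph{formal} $A_\infty$-structure on $E$ with $f_G=f$ and $\gr G^k=\Id_E^k$ for all $k\le d$. Indeed, given such a $G$, choose any strictly unital superfiltered degree-$0$ sequence $\F=(\F^k)_{k\ge1}$ of maps $E[1]^{\otimes k}\to E[1]$ with $\gr\F=G$; then $\F^1$ is invertible (its leading term is $\id_E$ and the filtration on $E$ is finite), so there is a unique $A_\infty$-structure $\mathcal{A}_2^f$ on $E$ making $\F\colon\mathcal{A}_2^f\to\mathcal{A}_2$ an $A_\infty$-homomorphism, namely the pullback of $\mathcal{A}_2$ along $\F$. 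This $\mathcal{A}_2^f$ is again a superfiltered deformation of $E$, because its associated graded is the pullback of the formal structure on $E$ along the automorphism $G$, hence the formal structure again; and $\F$ is then a $d$-equivalence with $f_\F=f_{\gr\F}=f_G=f$, so $\Po_{\mathcal{A}_2^f}=\Po_2\circ f_\F=\Po_2\circ f$.

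It remains to construct $G$, which concerns only the exterior algebra with its formal structure. This is the obstruction-theoretic construction of \cref{sscObstructionTheory}, carried out with both algebras equal to formal $E$ and with the associated formal diffeomorphism tracked: one builds the components of an $A_\infty$-self-homomorphism of formal $E$ order by order, at stage $r-1$ choosing the leading term of the $(r-1)$st component to have $\vv$-diagonal equal to the degree-$(r-1)$ part of $f$ (which pins down its Hochschild class via the HKR identification \eqref{eqExteriorHKR}), and then correcting the order-$r$ homomorphism equation by altering the $(r-1)$st component by a Hochschild coboundary $\sigma$. This is possible because the obstruction is the degree-$r$ part of $0\circ f_\F-0=0$, and it does not change $f_\F$: $\sigma$ is a coboundary, and coboundaries have vanishing $\vv$-diagonal since $\vv\wedge\vv=0$. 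The minor technical points (strict unitality of $\sigma$, restoring the $\Z$-grading with the variable $T$) are handled exactly as in \cref{sscObstructionTheory}, and are unproblematic here because no powers of $T$ occur in the wedge product, so the relevant Hochschild complex splits over them. Since $f\equiv\id_V$ modulo $\m^{d+1}$, the degree-$k$ part of $f$ agrees with that of $\id_V$ for $k\le d$, so the construction can be carried out with $\gr G^k=\Id_E^k$ for $k\le d$; and $G$, having invertible first component, is an automorphism.

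An alternative, perhaps more transparent, route to $G$ is to reparametrise the ground ring: the $R_0$-algebra automorphism $g\mapsto g\circ f$ of $R$ carries the stabilisation $\mfE_0$ of $R_0$ for $w=\Po_2$ to a stabilisation of $R_0$ over $(R,\Po_2\circ f)$, which by the filtered uniqueness of stabilisations used in \cref{propmfEqual} is isomorphic to the $\mfE_0$ for $w=\Po_2\circ f$; following this identification through endomorphism algebras and minimal models produces the $d$-equivalence directly, the filtration degrees being controlled by $f\equiv\id_V$ modulo $\m^{d+1}$. Either way, the main obstacle is this construction of $G$ out of $f$ — in particular checking that the obstructions vanish and that the filtration degrees come out as claimed; everything else in the reduction is formal.
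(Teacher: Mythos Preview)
Your reduction to $d=\infty$ by realising $f$ as a formal diffeomorphism and then pulling back is exactly the paper's strategy; the difference is in how the diffeomorphism is produced. The paper writes one down directly: $\Delta^1$ is the algebra automorphism of $E$ induced by the linear part of $f$, and for $k\ge2$ the map $\Delta^k$ is the degree-$k$ Taylor coefficient of $f$, extended by zero off $V^{\otimes k}\subset E^{\otimes k}$; no obstruction theory enters. Your route --- building an honest $A_\infty$-automorphism $G$ of formal $E$ with $f_G=f$ via the inductive argument of \cref{sscObstructionTheory} --- is more laborious but also more careful on one point the paper passes over quickly. The paper's $\Delta$ is \emph{not} an $A_\infty$-endomorphism of formal $E$ once some $\Delta^k$ with $k\ge2$ is nonzero (on inputs $a_i\in V$ the order-$3$ relation becomes $0=\pm\Delta^2(a_3,a_2)\wedge a_1\pm a_3\wedge\Delta^2(a_2,a_1)$, which fails in general), so the associated graded of $\Delta_*\mathcal{A}_1$ is $\Delta_*(\text{formal }E)$ rather than formal $E$ itself, and the paper's assertion that $\Delta_*\mathcal{A}_1$ is a superfiltered deformation of $E$ needs more than the one line it is given. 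Your insistence that $\gr\F=G$ be a genuine $A_\infty$-automorphism of formal $E$ is precisely what makes this step clean, and your alternative route via the ring automorphism $g\mapsto g\circ f$ of $R$ achieves the same end. Two small remarks: once $G$ is built you may simply take $\F=G$, since $G$ already lives on $E$ and not merely on its associated graded; and in your inductive step the correction $\sigma$ is a Hochschild \emph{cochain} cobounding the obstruction, not itself a coboundary --- the point is rather that $\sigma$ can be \emph{chosen} to have vanishing $\vv$-diagonal, by adjusting it by a suitable cocycle.
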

\begin{proof}
Suppose $\calA_1$ and $\calA_2$ are superfiltered deformations and $f : V \to V$ is a $d$-equivalence between their potentials, so $\Po_1 = \Po_2 \circ f$.  The linear part of $f$ defines a linear autmorphism of $V$, and thus induces a linear automorphism of $E$ which we denote by $\Delta^1$.  For each $k \geq 2$ the homogeneous degree $k$ part of $f$ gives a map $V^{\otimes k} \to V$, and we extend this to a degree $0$ map $\Delta^k : E[1]^{\otimes k} \to E[1]$ using the projection-onto and inclusion-of the degree $1$ part of $E$ (with its $\Z$-grading).  We can view this $\Delta$ as a formal diffeomorphism of $\calA_1$ (`$\Delta$' stands for `diffeomorphism'), meaning a sequence of maps $\calA_1[1]^{\otimes k} \to \calA_1[1]$ whose $k=1$ component is a linear automorphism.  There is then a unique $A_\infty$-algebra structure on $E$, denoted by $\Delta_* \calA_1$, such that $\Delta$ defines an $A_\infty$-isomorphism $\calA_1 \to \Delta_* \calA_1$ \cite[Section (1c)]{SeidelBook}.

Since $\Delta$ is strictly unital and superfiltered of degree $0$, we have that $\Delta_* \calA_1$ is a superfiltered $A_\infty$-deformation of $E$.  Moreover, by construction $\Delta$ gives a $d$-equivalence $\calA_1 \to \Delta_* \calA_1$, and the potential $\Po_{1, \Delta}$ of $\Delta_* \calA_1$ satisfies
\[
\Po_1 = \Po_{1, \Delta} \circ f_{\Delta} = \Po_{1, \Delta} \circ f,
\]
where $f_\Delta$ is the change of variables as defined after \cref{rmkBeforeChangeOfVariables}.  Plugging in $\Po_1 = \Po_2 \circ f$, we see that $\Po_2 = \Po_{1,\Delta}$ and hence that $\calA_2$ is $\infty$-equivalent to $\Delta_* \calA_1$ by \cref{corMainThmInfty}.  Composing the $d$-equivalence $\Delta : \calA_1 \to \Delta_* \calA_1$ with an $\infty$-equivalence $\Delta_* \calA_1 \to \calA_2$, we obtain the desired $d$-equivalence $\calA_1 \to \calA_2$.
\end{proof}


\section{Alternative approaches and related results}
\label{secObstructionDeformation}

In this section we digress briefly to discuss a different method one could use to classify superfiltered $A_\infty$-deformations of $E$.  We then describe some similar results obtained without the superfiltered hypothesis.  These demonstrate how useful superfilteredness is in simplifying the problem.


\subsection{An alternative classification strategy: obstruction theory}
\label{sscObstructionTheory}

Given two superfiltered deformations $\calA_1$ and $\calA_2$ of $E$ with the same (i.e.~$\infty$-equivalent) disc potentials $\Po_1 = \Po_2$, our argument shows that they are $\infty$-equivalent by proving that each of them is $\infty$-equivalent to the matrix factorisation algebra $\calB_0^\mathrm{min}$.  Another possible approach is to start with the identity isomorphism $\id_E$ between the Clifford algebras $\rH^*(\calA_1)$ and $\rH^*(\calA_2)$---this holds because the quadratic parts of $\Po_1$ and $\Po_2$ coincide---and try to lift this step-by-step to an $\infty$-equivalence $\F = (\F^k)$.  A similar problem was considered by Seidel in \cite[Section (1g)]{SeidelBook}, where he proved the following result

\begin{prop}[{Simplified version of \cite[Lemma 1.9]{SeidelBook}}]
If $\Z$-graded $A_\infty$-algebras $\calA_1$ and $\calA_2$ have the same cohomology algebra $A$, and if the Hochschild cohomology groups $\HH^2(A)^{2-r}$ vanish for all $r \geq 3$, then there exists an $A_\infty$-map $\F : \calA_1 \to \calA_2$ inducing the identity map on cohomology.
\end{prop}
\begin{proof}[Sketch proof]
Suppose we have constructed $\F^1, \dots, \F^{r-1}$ so that the $A_\infty$-homomorphism equations are satisfied up to and including order $r-1$, for some $r \geq 3$.  Take an arbitrary $\F^r$ and consider the difference between the two sides of the order-$r$ $A_\infty$-homomorphism equation
\begin{equation}
\label{eqAinftyHom}
\sum \mu_{\calA_2} (\F(\dots),\dots,\F(\dots)) - \sum \pm \F(\dots, \mu_{\calA_1}(\dots),\dots).
\end{equation}
This descends to a cocycle in $CC^2(A)^{2-r}$, defined using the bar complex, and by assumption there exists a Hochschild cochain $\sigma$ with $\diff\sigma = \text{\eqref{eqAinftyHom}}$.  The latter says precisely that if $\F^{r-1}$ is replaced with $\F^{r-1}-\sigma$ then \eqref{eqAinftyHom} vanishes at cohomology level (the lower order equations are not affected).  One can then modify $\F^r$ to make \eqref{eqAinftyHom} vanish at chain level, and proceed by induction.
\end{proof}

In our case the $A_\infty$-algebras $\calA_1$ and $\calA_2$ are only $\Z/2$-graded, and hence so is the Hochschild cohomology.  Moreover, the groups $\HH^2(A)^{2-r}$ are in general non-zero: assuming for now that the quadratic part of $\Po_1=\Po_2$ vanishes, so that $A$ is just $E = \Lambda V$, the HKR theorem tells us that
\begin{equation}
\label{eqExteriorHKR}
\HH^t(A)^s \cong R_{t-s} \otimes_{R_0} \Lambda^t V.
\end{equation}
Here $R_{t-s}$ denotes the homogeneous degree $t-s$ part of $R = R_0\llbracket x_1, \dots, x_n \rrbracket$ as usual, and the grading on $\Lambda V$ is understood modulo $2$.  (If one is worried about applying HKR over a ground ring that isn't a field, one can explicitly resolve $A$ as an $A$-bimodule using the Koszul resolution and see \eqref{eqExteriorHKR} directly.)  In terms of the bar complex, the HKR map is given \cite[Equation (3.13)]{SeidelGenusTwo} by
\begin{equation}
\label{eqHKRBar}
\phi \in CC^t(A) \mapsto \sum_{r\geq 0}\phi^r(\vv, \dots, \vv) \in R \otimes_{R_0} \Lambda^t V.
\end{equation}

Now suppose we have constructed the first $r-1$ terms $\F^1, \dots, \F^{r-1}$ of an $\infty$-equivalence $\calA_1 \to \calA_2$, so that the $A_\infty$-homomorphism equations $\calA_1 \to \calA_2$ are satisfied up to and including order $r-1$.  The difference \eqref{eqAinftyHom} again defines a Hochschild cocycle, and under \eqref{eqHKRBar} it is sent to the degree $r$ part of $\Po_2 \circ f_\F - \Po_1$.  By our inductive hypothesis, the leading term of $\F^1$ is $\id_E$ whilst the leading terms of higher $\F^k$ vanish, so $f_\F$ is the identity.  Our assumption that $\Po_1 = \Po_2$ then ensures that the obstruction class $\Po_2 \circ f_\F - \Po_1$ vanishes, so we can pick a $\sigma$ which cobounds and replace $\F^{r-1}$ with $\F^{r-1}-\sigma$ to make the order-$r$ $A_\infty$-homomorphism equation hold.  There is no need to modify $\F^r$---which doesn't even appear in the equation, and which we may as well take to be zero---since $\mu^1=0$.  Then continue inductively.

To make this precise one needs to check that $\sigma$ can be chosen to be strictly unital, and to respect the filtration and vanish at the associated graded level.  The former can be achieved by working with the \emph{reduced} bar complex, whilst for the latter one can introduce a formal variable $T$ of degree $2$ and insert appropriate powers of $T$ into all formulae to restore $\Z$-gradings (as in \cref{defTcomplex}).  The expression \eqref{eqAinftyHom} is then divisible by $T$, as well as being a cocycle, and one needs to show that it is of the form $T\diff\sigma$.  In the present case ($A=E$) this is clear since no $T$'s appear in the product on $A$ and hence the Hochschild complex splits as a direct sum over powers of $T$.  To extend the whole argument to the general case, where the quadratic part of $\Po_1 = \Po_2$ is non-zero and so $A$ is a non-trivial Clifford algebra, one would need to compute the Hochschild cohomology with $T$ adjoined and check that \eqref{eqAinftyHom} is of the required form.

We do not pursue this, since our localised mirror functor approach has the advantage of being more direct and geometric, realising our $A_\infty$-algebra concretely in the dg-category of matrix factorisations, and in doing so providing a simple dg-model for it.  It is also better suited to our geometric goals in \cref{secMonotoneTori}.  However, the two approaches are actually more closely related than they may appear: Seidel describes his argument as a `nonlinear analogue of a spectral sequence', and the localised mirror functor can be seen as linearising it to the (ordinary) spectral sequence we use to compare the dg-algebras $\calB$ and $\calB_0$.


\subsection{Related results: deformation theory}

$A_\infty$-deformations of the exterior algebra have been studied before in the context of mirror symmetry, from the perspective of formal deformation theory and without the strong hypothesis of superfilteredness (i.e.~without assuming that the $A_\infty$-operations respect the obvious filtration and reduce to the standard operations on $E$ at the associated graded level).  The approach pioneered by Seidel \cite[Sections 3--5]{SeidelGenusTwo} begins with the differential graded Lie algebra (dgla) of Hochschild cochains, which governs $A_\infty$-deformations, and applies Kontsevich's formality theorem \cite{KontsevichDeformationQuantisation} and HKR to replace it with the dgla of polyvector fields.  This is essentially the right-hand side of \eqref{eqExteriorHKR}, equipped with the Schouten bracket and a grading shift, but there is also an extra formal deformation parameter $\hbar$, with respect to which one takes a completion.  Any given deformation can then be described by a gauge-equivalence class of polyvector fields, and the goal is to identify this class from the computation of a finite number of $A_\infty$-operations.  That this is a reasonable task is a consequence of finite determinacy for singularities \cite{Tougeron}: a formal function with an isolated critical point can be identified up to formal change of variables by a finite number of terms in its expansion.

This technique is very powerful, underpinning (amongst other applications) proofs of homological mirror symmetry by Seidel \cite{SeidelGenusTwo}, Efimov \cite{EfimovHigherGenus} and Sheridan \cite[see in particular Theorem 2.91]{SheridanCY}.  However, it requires significant ingenuity, and manipulations that are often specific to the situation at hand, exploiting additional constraints on the algebra that are known to exist for geometric reasons.   It also relies heavily on working in characteristic zero, both for the framing of the deformation problem in terms of dgla's (or $L_\infty$-algebras) and for the formality theorem to hold.

To end this discussion we mention the following theorem of Efimov

\begin{thm}[{\cite[Theorem 8.1]{EfimovHigherGenus}}]
Suppose $R_0$ is a field of characteristic zero and $w \in R_0[V]$ is a polynomial with no terms of degree two or lower.  Consider the matrix factorisation $\mfE_0$ of $w$ from \cref{sscDefE0}, and the minimal model $\calB_0^\mathrm{min}$ of its endomorphism dg-algebra.  The equivalence class of $\calB_0^\mathrm{min}$, under $\Z/2$-graded $A_\infty$-quasi-isomorphisms which act trivially on cohomology, determines $w$ up to a formal change of variables whose linear term is the identity.
\end{thm}

The corresponding result in the superfiltered world is the easy observation that the map \eqref{eqMainBijection} is well-defined when $d=1$ (plus \cref{thmB0Potential}).  In contrast, Efimov's result is highly non-trivial.


\section{Hochschild cohomology}
\label{secHH}

We conclude the purely algebraic part of the paper by computing the Hochschild cohomology $\HH^*(\calA)$ of an arbitrary superfiltered $A_\infty$-deformation $\calA$ of $E$.  This doesn't rely on our earlier results, but is similar in spirit: we repeatedly insert the class $\vv$ to convert the Hochschild complex into a more manageable one (just as we converted $\calA$ into the endomorphism algebra of a matrix factorisation using the localised mirror functor), and then relate this simpler complex to a standard construction (as we related the target matrix factorisation to $\mfE_0$).


\subsection{The reduced Hochschild complex}

Recall that the Hochschild cohomology $\HH^*(\calA)$ of an $A_\infty$-algebra $\calA$ over a ring $R_0$ is the $\Ext$-algebra of the diagonal bimodule, i.e.~$\Ext_{\calA \otimes \calA^\mathrm{op}} (\calA, \calA)$.  As usual, undecorated tensor products are over $R_0$.  When $\calA$ is free as an $R_0$-module, this can be computed by using the bar resolution of the diagonal to give the following Hochschild complex:
\[
CC^t(\calA) = \prod_{r \geq 0} \Hom^t_{R_0} (\calA[1]^{\otimes r}, \calA) = \prod_{r \geq 0} \Hom^{t-r}_{R_0} (\calA^{\otimes r}, \calA).
\]
The differential of $\phi = (\phi^r)_{r \geq 0} \in CC^t(\calA)$ given by
\begin{multline*}
(\diff \phi) (a_k, \dots, a_1) = \sum_{i,j} (-1)^{(t-1)\maltese_i} \mu^{k-j+1}(a_k, \dots, a_{i+j+1}, \phi^j(a_{i+j}, \dots, a_{i+1}), a_i, \dots, a_1)
\\ - \sum_{i,j} (-1)^{\maltese_i+t-1} \phi^{k-j+1}(a_k, \dots, a_{i+j+1}, \mu^j(a_{i+j}, \dots, a_{i+1}), a_i, \dots, a_1).
\end{multline*}
Here $t$ is in $\Z$ or $\Z/2$, depending on whether $\calA$ is $\Z$- or $\Z/2$-graded.  If $\calA$ is strictly unital then we can equivalently use the \emph{reduced} Hochschild complex
\[
\overline{CC}^t(\calA) = \{\phi \in CC^t : \phi^k(a_k, \dots, a_1) = 0 \text{ if some } a_i \text{ is an } R_0\text{-scalar multiple of the unit}\},
\]
with the restriction of the above differential.  We shall focus on this case from now on.


The cup product on $\overline{CC}(\calA)$ is defined by
\begin{multline}
\label{eqHHcup}
(\phi \smile \psi)^k(a_k, \dots, a_1) = \sum (-1)^{(\lvert \phi \rvert -1) \maltese_l + (\lvert \psi \rvert-1) \maltese_i}\mu^{k-j-m+1}(a_k, \dots, a_{l+m+1},
\\ \phi^m(a_{l+m}, \dots, a_{l+1}), a_l, \dots, a_{i+j+1}, \psi^j(a_{i+j}, \dots, a_{i+1}), a_i, \dots, a_1).
\end{multline}
This makes $\HH^*(\calA)$ into an associative unital algebra, and Gerstenhaber \cite[Corollary 1]{Gerstenhaber} showed that it is in fact graded-commutative; for an alternative explanation see \cite[Section 2.5]{SuarezAlvarez}.  One can make $\overline{CC}(\calA)$ into an $A_\infty$-algebra by defining higher $A_\infty$-operations with formulae analogous to \eqref{eqHHcup}.

\begin{rmk}
For a detailed exposition of Hochschild invariants of associative algebras and $A_\infty$-categories we refer the reader to Witherspoon \cite{WitherspoonIntroductionToHH} and Ritter--Smith \cite[Section 2]{RitterSmith} respectively.
\end{rmk}

Now assume that $\calA$ is a superfiltered $A_\infty$-deformation of $E = \Lambda V$.  As in \cref{sscLocalisedMirror}, we fix an identification $\calA \cong E$, lifting the given isomorphism $\gr \calA \cong E$.  To compute $\HH^*(\calA)$ from $\overline{CC}(\calA)$ we will write down a map to a smaller complex and show that it's a quasi-isomorphism by what is effectively another spectral sequence argument.  To define the necessary filtration, consider the following bigrading on $\overline{CC}(\calA)$:
\[
\overline{CC}^{r,s}(\calA) = \Hom^s_{R_0} (\calA^{\otimes r}, \calA) \quad \text{so that} \quad \overline{CC}(\calA) = \prod_{r \geq 0} \ \bigoplus_{s=-rn}^n \overline{CC}^{r,s}(\calA).
\]
Recall that $n$ is the rank of the module $V$, whose exterior algebra we're deforming.  The overall grading $t$ is $r+s$.  The components of the differential have bidegree $(i, 1-i-2j)$ for $i \geq 1$, $j \geq 0$ (if $\calA$ were $\Z$-graded then we could restrict to $j=0$), so in particular the differential preserves the decreasing filtration
\[
\overline{CC} = F^0\overline{CC} \supset F^1\overline{CC} \supset F^2\overline{CC} \supset \dots
\]
given by bounding the second grading, namely
\[
F^p \overline{CC} = \prod_{r \geq 0} \ \bigoplus_{s=-rn}^{n-p} \overline{CC}^{r,s}.
\]
We'll call this the \emph{main} filtration.

\begin{rmk}
If $\calA$ is a $\Z$-graded associative algebra $A$ then the full bigrading descends to cohomology $\HH^*(A)$, and the group $\HH^t(A)^s$ appearing in \cref{sscObstructionTheory} is the $(t-s, s)$ bigraded piece.
\end{rmk}

\begin{rmk}
One could also define a decreasing filtration
\[
\overline{CC} = L^0\overline{CC} \supset L^1\overline{CC} \supset \dots
\]
via the $r$-grading, namely
\[
L^p \overline{CC} = \{\phi \in \overline{CC} : \phi^r = 0 \text{ for } r \leq p\},
\]
and again this is respected by the differential.  This is called the \emph{length} filtration, and it will also make an appearance in our story.
\end{rmk}


\subsection{Constructing the main map}

The Hochschild--Kostant--Rosenberg theorem tells us that $\HH^*(E) \cong E_R = R \otimes E$ as algebras.  Moreover, the isomorphism is induced by the chain map
\[
\phi \in \overline{CC}(E) \mapsto \sum_{k \geq 0} \phi^k(\vv, \dots, \vv),
\]
after extending $\phi$ $R$-multilinearly, and this is in fact an $A_\infty$-homomorphism.  This map can be understood in more general terms, and by applying the same construction to our superfiltered deformation $\calA$ we will arrive at the desired map out of $\overline{CC}(\calA)$.  We now explain this generalisation.

The first ingredient is the observation that the length filtration already gives us an $A_\infty$-algebra map from $\overline{CC}(\calA)$ to a much simpler complex, namely $\calA$ itself, by `projecting to length zero', i.e.~projecting from $\overline{CC}(\calA)$ to $\overline{CC}(\calA) / L^1\overline{CC}(\calA)$.  This induces a map $\HH^*(\calA) \to \rH^*(\calA)$, but usually this will be far from injective since we've ignored most of $\overline{CC}$.  It will often not be surjective either, since $\HH^*(\calA)$ is graded-commutative but the Clifford algebra $\rH^*(\calA)$ is not in general.

The second ingredient is that by `inserting $\vv$ in all possible ways' one can define $A_\infty$-operations $\muvv^k : (R \otimes \calA[1])^{\otimes_R k} \to R \otimes \calA[1]$ on $R \otimes \calA$ (which we've identified with $E_R$) by
\[
\muvv^k(a_k, \dots, a_1) = \sum_{i_0, \dots, i_k} \mu^{k+i_0+\dots+i_k}(\underbrace{\vv, \dots, \vv}_{i_k}, a_k, \underbrace{\vv, \dots, \vv}_{i_{k-1}}, \dots, a_1, \underbrace{\vv, \dots, \vv}_{i_0});
\]
we denote this $A_\infty$-algebra by $\calA_{\vv}$.  It should be viewed as a deformation of $\calA$ but since we are already viewing $\calA$ itself as a deformation of $E$ we will avoid this terminology.  Crucially, there is a map from $\overline{CC}_{\! R_0}(\calA)$ to $\overline{CC}_{\! R}(\calA_{\vv})$, where the subscripts on $\overline{CC}$ denote the base ring (which will be taken as read from now on).  This map is again given by inserting $\vv$ in all possible ways, after extending Hochschild cochains $R$-multilinearly.  Explicitly, given a class $\phi$ in $\overline{CC}(\calA)$, its image $\phi_{\vv}$ under this map satisfies
\begin{equation}
\label{eqHHwbc}
\phi_{\vv}^k (a_k, \dots, a_1) = \sum_{i_0, \dots, i_k} \phi^{k+i_0+\dots+i_k}(\underbrace{\vv, \dots, \vv}_{i_k}, a_k, \underbrace{\vv, \dots, \vv}_{i_{k-1}}, \dots, a_1, \underbrace{\vv, \dots, \vv}_{i_0})
\end{equation}
for all $a_1, \dots, a_k$.  This defines an $A_\infty$-algebra map $\overline{CC}(\calA)[1] \to \overline{CC}(\calA_{\vv})[1]$, whose higher components $\overline{CC}(\calA)[1]^{\otimes \geq 2} \to \overline{CC}(\calA_{\vv})[1]$ are zero.

If we apply this map before projecting to length zero, i.e.~consider the composition
\begin{equation}
\label{eqHHmap}
P_{\vv} : \overline{CC}(\calA) \xrightarrow{\ \text{insert }\vv \text{, i.e.~$\phi \mapsto \phi_{\vv}$}\ } \overline{CC}(\calA_{\vv}) \xrightarrow{\ \text{project to length zero}\ } \calA_{\vv},
\end{equation}
we obtain a strictly unital $A_\infty$-algebra map $\overline{CC}(\calA) \to \calA_{\vv}$, and hence a unital algebra homomorphism $\HH^*(\calA) \to \rH^*(\calA_{\vv})$.  This has more chance of being injective, and our main result is that in fact

\begin{thm}
\label{thmHHisom}
The map $\rH^*(P_{\vv}) : \HH^*(\calA) \to \rH^*(\calA_{\vv})$ induced by $P_{\vv}$, which is given explicitly by
\[
[\phi] \mapsto \Big[\sum_{k\geq 0} \phi^k(\vv, \dots, \vv)\Big],
\]
is a canonical isomorphism of unital $R_0$-algebras.
\end{thm}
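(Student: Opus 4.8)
The plan is to reduce the theorem to the single assertion that $P_{\vv}$ is a quasi-isomorphism. Granting this, $\rH^*(P_{\vv})$ is a bijection; since $P_{\vv}$ is already a strictly unital $A_\infty$-algebra map (as set up above), the induced map on cohomology is automatically a homomorphism of unital algebras, and the displayed formula is nothing but the length-zero component $\phi_{\vv}^0 = \sum_{k \geq 0}\phi^k(\vv, \dots, \vv)$ read off from \eqref{eqHHwbc}. Canonicity is immediate because $\vv$ is canonical. Note also that $\mathcal{A}_{\vv}$ is really a curved $A_\infty$-algebra whose curvature $\muvv^0 = \sum_k \mu^k(\vv, \dots, \vv) = \Po \cdot 1_{E_R}$ is a scalar multiple of the unit; a short sign computation with the $A_\infty$-relations shows that $\muvv^1$ nonetheless squares to zero, so that $\rH^*(\mathcal{A}_{\vv})$, and the product it inherits from $\muvv^2$, are well defined.

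To prove that $P_{\vv}$ is a quasi-isomorphism I would run the spectral sequence of the main filtration on both source and target. First one checks that $P_{\vv}$ is filtered: $\vv$ lies in $\m \otimes V$, hence in filtration level one, so inserting copies of $\vv$ into the $\mu^k$ cannot raise the internal degree $s$, and $P_{\vv}$ respects the decreasing filtration $F^\bullet$ (here one should equip $R$ with its $\m$-adic filtration, placing $x_i$ in internal degree $-1$, so that an insertion of $\vv$ is filtration-neutral). Next, passing to $\gr_F$ replaces each operation $\mu^k$ on $\mathcal{A}$ by its leading term, so $\mu^2$ becomes the wedge product and $\mu^{\geq 3}$ becomes zero; consequently $\gr_F\overline{CC}(\mathcal{A})$ is the ordinary (bar) Hochschild complex of the exterior algebra $E$, and $E_1$ of the source is $\HH^*(E) \cong R \otimes E$ by Hochschild--Kostant--Rosenberg (cf.\ \eqref{eqExteriorHKR}; over a general base ring one sees this directly from the Koszul resolution of the diagonal bimodule). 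On the target, the leading term of $\muvv^1$ vanishes: its only internal-degree-preserving contributions come from $\mu^2(\vv, \bullet)$ and $\mu^2(\bullet, \vv)$, which at leading order are $\vv \wedge \bullet$ and $\bullet \wedge \vv$ and cancel with the $A_\infty$ signs, while every term involving $\mu^{\geq 3}$ has vanishing leading part. Hence $\gr_F\mathcal{A}_{\vv}$ is $E_R = R \otimes E$ with zero differential and the wedge product, so $E_1$ of the target is $R \otimes E$ as well, and $\gr_F P_{\vv}$ is precisely the chain-level HKR map $\phi \mapsto \sum_k \phi^k(\vv, \dots, \vv)$, which is a quasi-isomorphism. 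Thus $P_{\vv}$ is an isomorphism on $E_1$.

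Finally, to pass from ``$E_1$-isomorphism'' to ``quasi-isomorphism'' I would invoke completeness: the main filtration is exhaustive, Hausdorff, and complete on both complexes --- for $\overline{CC}(\mathcal{A})$ this uses that it is a product over the arity, so $\overline{CC}(\mathcal{A}) = \varprojlim_p \overline{CC}(\mathcal{A})/F^p$ --- and a filtered chain map between complete exhaustive filtered complexes that is an isomorphism on some page of the associated spectral sequences is a quasi-isomorphism; this is the analogue for complete filtrations of \cref{lemSSisom}, and if one prefers strictly finite filtrations one can instead feed the superfiltered complexes through the $T$-construction of \cref{defTcomplex} and argue exactly as there. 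I expect the main obstacle to be the bookkeeping in the middle paragraph: arranging the gradings (and the $\m$-adic filtration on $R$) so that everything is honestly $F$-filtered, and carrying out the sign-accurate verification both that $\gr_F\muvv^1 = 0$ and that $\gr_F P_{\vv}$ is the standard HKR map. Once those are in place the convergence step is formal, and the algebra-isomorphism statement is free.
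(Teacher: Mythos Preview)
Your proposal is correct and follows essentially the same route as the paper: filter both sides by the ``main'' filtration, identify the associated graded of $\overline{CC}(\mathcal{A})$ with the Hochschild complex of $E$ and that of $\mathcal{A}_{\vv}$ with $E_R$ carrying zero differential, recognise $\gr P_{\vv}$ as the classical HKR map, and then pass from an $E_1$-isomorphism to a quasi-isomorphism via completeness (the paper invokes the Eilenberg--Moore comparison theorem by name). One small caveat: your alternative suggestion of feeding things through the $T$-construction of \cref{defTcomplex} and invoking \cref{lemSSisom} does not work here, since that lemma requires the underlying $\Z$-grading to be bounded and the Hochschild complex is an infinite product over arities; the completeness argument you give first is the right one.
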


\begin{rmk}
The map \eqref{eqHHwbc} is a special case of the map on Hochschild cohomology given by incorporating bounding cochains; see Sheridan \cite[Section 4.2]{SheridanFano}.  The idea of pulling information down the length filtration in this way was inspired by Seidel---\cite[Equation (1.16)]{SeidelFlux} is precisely \eqref{eqHHmap}.
\end{rmk}


\subsection{The map is an isomorphism}

We thus have the reduced Hochschild complex $\overline{CC}(\calA)$, with its main filtration, and the above chain map $P_{\vv}$ to $\calA_{\vv}$.  In order to prove \cref{thmHHisom} we shall define a corresponding filtration on $\calA_{\vv}$ so that $P_{\vv}$ becomes a filtered chain map, then show that it is a quasi-isomorphism on the associated graded complexes, and finally use completeness of the filtrations to deduce that it's a quasi-isomorphism between the original complexes.

First we introduce a bigrading on $\calA_{\vv}$, whose underlying module is identified with $E_R = R \otimes \Lambda V$, by setting $\calA_{\vv}^{r,s} = R_r \otimes \Lambda^{r+s} V$, where $R_r$ comprises homogeneous polynomials of degree $r$ in the $x_i$.  Just as for $\overline{CC}^{r,s}$, the overall grading is $r+s$ and the components of the differential have bidegree $(i, 1-i-2j)$ for $i\geq 1$, $j\geq 0$.  Moreover, $P_{\vv}$ sends $\overline{CC}^{r,s}(\calA)$ to $\calA_{\vv}^{r,s}$.  We deduce that if $\calA_{\vv}$ is equipped with the decreasing filtration
\[
F^p\calA_{\vv} = \prod_{r \geq 0} \bigoplus_{s=-r}^{n-p} \calA_{\vv}^{r,s}
\]
then it becomes a filtered complex, and $P_{\vv}$ a filtered map.

\begin{lem}
\label{lemgrPqis}
The map $P_{\vv}$ induces an isomorphism of $R_0$-algebras
\[
\rH^*(\gr P_{\vv}) : \rH^*(\gr \overline{CC}(\calA)) \to \rH^*(\gr \calA_{\vv}).
\]
\end{lem}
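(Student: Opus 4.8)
The plan is to make both associated graded complexes and the map $\gr P_{\vv}$ completely explicit, reducing everything to the Hochschild--Kostant--Rosenberg (HKR) theorem for the undeformed exterior algebra. First I would identify $\gr\overline{CC}(\mathcal{A})$. Since $\mathcal{A}$ deforms the \emph{formal} $A_\infty$-structure on $E$, the leading term of $\mu^2$ is the wedge product (carrying the sign of \eqref{eqdgAsAinfinity}) while the leading terms of $\mu^1$ and of $\mu^k$ for $k\geq 3$ vanish. Because the components of the Hochschild differential carry bidegree $(i,1-i-2j)$ with $i\geq 1$, $j\geq 0$, and the main filtration bounds the internal degree $s$, the only component surviving on the associated graded is the bidegree-$(1,0)$ one; this is built solely from the leading term of $\mu^2$ and so is the reduced Hochschild differential of the associative algebra $E=\Lambda V$, and likewise $\gr(\smile)$ is the classical cup product. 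Thus $\gr\overline{CC}(\mathcal{A})$ is the reduced Hochschild complex of $\Lambda V$, and it splits as a direct sum over $s$, the differential acting within each summand.

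Next I would run the same bookkeeping on $\mathcal{A}_{\vv}$. For $\muvv^1$ the only candidate $s$-preserving component sends $a$ to $(-1)^{|a|}\vv\wedge a+(-1)^{|\vv|}a\wedge\vv$ (the leading term of $\mu^2$, with one $\vv$ inserted on either side), and this vanishes identically since $\vv$ has odd degree; moreover $\muvv^k$ has no $s$-preserving component at all for $k\geq 3$ (it would require fewer than zero $\vv$'s to be inserted), while for $k=2$ the $s$-preserving component is again the wedge product. Hence $\gr\mathcal{A}_{\vv}$ is just $R\otimes E$ with the formal structure --- in particular with zero differential --- so $\rH^*(\gr\mathcal{A}_{\vv})=R\otimes E$ as an algebra. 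Since $P_{\vv}$ strictly preserves the bigrading, $\gr P_{\vv}$ is again the map $\phi\mapsto\sum_{k\geq 0}\phi^k(\vv,\dots,\vv)$, now read as a chain map $\overline{CC}(E)\to R\otimes E$: this is exactly the HKR map \eqref{eqHKRBar}. It preserves $s$, and on each $s$-graded piece it is the corresponding graded component of the HKR quasi-isomorphism for $\Lambda V$ --- which, as in the remark after \eqref{eqExteriorHKR}, holds over an arbitrary ground ring via the Koszul bimodule resolution of $E$ --- so summing over $s$ shows $\gr P_{\vv}$ is a quasi-isomorphism. Because $P_{\vv}$, and hence $\gr P_{\vv}$, is a strictly unital $A_\infty$-homomorphism, $\rH^*(\gr P_{\vv})$ is a homomorphism of unital algebras; being a quasi-isomorphism, it is the asserted isomorphism of unital $R_0$-algebras.

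I expect the main obstacle to be the leading-term bookkeeping in the first two steps: one has to be sure that on the associated graded of $\overline{CC}(\mathcal{A})$ the differential (and cup product) is \emph{precisely} the classical Hochschild structure of the associative exterior algebra, with nothing surviving from the deformation terms or from the higher $A_\infty$-operations, and dually that the associated graded differential of $\mathcal{A}_{\vv}$ vanishes identically (the odd-degree cancellation above). Once those identifications are secured the statement is just a repackaging of HKR, which has already been invoked when constructing $P_{\vv}$.
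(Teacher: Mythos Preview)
Your proposal is correct and follows essentially the same approach as the paper: identify $\gr\overline{CC}(\mathcal{A})$ with the reduced Hochschild complex of the undeformed exterior algebra $E$, identify $\gr\mathcal{A}_{\vv}$ with $R\otimes E$ carrying zero differential (via the odd-degree cancellation you describe, which the paper phrases as ``the leading term of $\mu^2(\vv,\bullet)+\mu^2(\bullet,\vv)$ is zero''), and then invoke bigraded HKR for $\Lambda V$. Your write-up is in fact slightly more detailed than the paper's on the leading-term bookkeeping and on why the algebra structure is respected, but the argument is the same.
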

\begin{proof}
For each $p$, the $p$th associated graded piece $\gr^p \overline{CC} = F^p\overline{CC} / F^{p+1}\overline{CC}$ is
\[
\prod_{r\geq 0} \Hom^{n-p}_{R_0} (\calA^{\otimes r}, \calA).
\]
The induced differential is the bidegree $(1,0)$ component of the original differential, and this is precisely the Hochschild differential for the ordinary exterior algebra $E$.  Similarly, we have
\[
\gr^p \calA_{\vv} = \prod_{r \geq 0} R_r \otimes \Lambda^{n-p+r} V,
\]
with differential given by the leading term of $\mu^2(\vv, \bullet) + \mu^2(\bullet, \vv)$, which is zero.  The fact that $\rH^*(\gr P_{\vv})$ is an isomorphism then follows from the classical HKR theorem for the exterior algebra, since the latter respects the $(r,s)$ bigrading.
\end{proof}

Now we can deduce what we actually want, namely that $\rH^*(P_{\vv})$ itself is an isomorphism:

\begin{proof}[Proof of \cref{thmHHisom}]
The main filtration on $\overline{CC}$ is $I$-complete and $P$-complete in each degree, meaning respectively that for each $t$ the natural maps of $R_0$-modules
\[
\varinjlim_p F^p\overline{CC}^t \to \overline{CC}^t \quad \text{and} \quad \overline{CC}^t \to \varprojlim_p \overline{CC}^t / F^p\overline{CC}^t
\]
are isomorphisms.  (The former is clear since $F^0\overline{CC} = \overline{CC}$, whilst the latter uses the fact that $\overline{CC}$ is defined as a \emph{product} over $r$, rather than a sum.)  Similarly our filtration on $\calA_{\vv}$ is $I$-complete and $P$-complete in each degree.  The Eilenberg--Moore comparison theorem \cite[Theorem 7.4]{EilenbergMoore} then tells us that a filtered chain map $\overline{CC} \to \calA_{\vv}$ which induces an isomorphism on some page of the associated spectral sequences is automatically a quasi-isomorphism---even though the spectral sequences need not converge.  By \cref{lemgrPqis}, $P_{\vv}$ induces just such an isomorphism on the first page.
\end{proof}

\begin{rmk}
We learnt the Eilenberg--Moore comparison from Sheridan, who used it in \cite[Section 6.4]{SheridanFano} to compute certain Hochschild cohomology groups using the length filtration.
\end{rmk}

\begin{rmk}
Since we are in a relatively simple setting, one can prove the relevant special case of the comparison theorem by hand: consider the mapping cone $C$ of $P_{\vv}$; this inherits a filtration, and using the fact that $\gr P_{\vv}$ is a quasi-isomorphism one can show that each $C/F^pC$ is acyclic by induction on $p$; take inverse limits and use the Milnor sequence to obtain the theorem.
\end{rmk}


\subsection{Identifying the result}
\label{sscIdentifyingHH}

Having computed $\HH^*(\calA)$ as $\rH^*(\calA_{\vv})$, it remains to describe the latter and prove \cref{thmHHA}.  To simplify notation define a differential $\diff_{\vv}$ and product $\cdot$ on $\calA_{\vv}$ from the operations $\muvv^1$ and $\muvv^2$ by the familiar sign rule \eqref{eqdgAinfinity}.  These almost make $\calA_{\vv}$ into a dg-algebra; the only problem is that $\cdot$ need not be associative, although of course it \emph{is} associative up to homotopy.

Now let $\Po$ be the disc potential $\sum_{k\geq 2} \mu^k(\vv, \dots, \vv)$ of $\calA$ as usual.  Recall that $v_1, \dots, v_n$ is a basis for $V$, and $x_1, \dots, x_n$ are the dual formal variables in $R$, so that $\vv = \sum x_iv_i$.  For each $i$ we have
\[
\diff_{\vv} v_i = -\muvv^1(v_i) = - \sum_{k,l} \mu^k(\vv, \dots, \vv, v_i, \underbrace{\vv, \dots, \vv}_l) = - \frac{\partial}{\partial x_i} \sum_k \mu^k(\vv, \dots, \vv) = - \frac{\partial \Po}{\partial x_i} = - \diff \Po \contract v_i.
\]
Similarly, for all $i$ and $j$ we have
\[
v_i \cdot v_j + v_j \cdot v_i = - (\muvv^2(v_i, v_j) + \muvv^2(v_j, v_i)) = - \frac{\partial^2\Po}{\partial x_i \, \partial x_j} \quad \text{and} \quad v_i \cdot v_i = - \muvv^2(v_i, v_i) = - \frac{1}{2} \frac{\partial^2 \Po}{\partial x_i^2}.
\]
The division by $2$ makes sense here, even if $2$ is not invertible in $R_0$, since differentiating a monomial $x_1^{m_1}\dots x_n^{m_n}$ twice with respect to $x_i$ brings down the coefficient $m_i(m_i-1)$, which is even; the operator $\tfrac{1}{2}\partial_i^2$ operator is defined to bring down $m_i(m_i-1)/2$ instead.  Let $\tfrac{1}{2}\Hess(\Po)$ denote half the Hessian quadratic form on $V_R = R \otimes V$, defined by
\[
\frac{1}{2} \Hess(\Po) \Big(\sum_i a_i v_i\Big) = \sum_i a_i^2 \frac{1}{2}\frac{\partial^2 \Po}{\partial x_i^2} + \sum_{i < j} a_ia_j \frac{\partial^2 \Po}{\partial x_i \, \partial x_j}.
\]

The above calculations suggest that $\calA_{\vv}$ is related to the Clifford algebra $\Cl(-\tfrac{1}{2}\Hess(\Po))$ as defined in \eqref{eqCliffordAlgebra}---i.e.~the tensor algebra over $R$, on the free module $V_R$, modulo the two-sided ideal generated by the relations $v \otimes v = -\tfrac{1}{2}\Hess(\Po)(v)$ for all $v$---equipped with the differential $-\diff \Po \contract \bullet$.  Note that this differential extends uniquely from $V$ to the tensor algebra on $V_R$ using $R$-linearity and the Leibniz rule, and then descends to the Clifford algebra since it annihilates the relations.  We denote the dg-algebra $(\Cl(-\tfrac{1}{2}\Hess(\Po)), -\diff\Po \contract \bullet)$ by $\mathcal{C}$.  We cannot expect $\calA_{\vv}$ to be $A_\infty$-quasi-isomorphic to $\mathcal{C}$ since in general $\calA_{\vv}$ may have non-vanishing higher $A_\infty$-operations, but our main result is

\begin{thm}
\label{thmHAv}
There is a canonical isomorphism of unital associative $R$-algebras $\rH^*(\calA_{\vv}) \cong \rH^*(\mathcal{C})$.
\end{thm}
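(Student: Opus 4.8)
\emph{Proof proposal for \cref{thmHAv}.} The plan is to build a filtered quasi-isomorphism between $\mathcal{C}$ and $\mathcal{A}_{\vv}$ out of the structure maps of $\mathcal{A}_{\vv}$, exploiting the exact chain-level identities established just above the statement: $\diff_{\vv}v_i=-\diff\Po\contract v_i$, together with $\muvv^2(v_i,v_j)+\muvv^2(v_j,v_i)=-\partial_i\partial_j\Po$ and $\muvv^2(v_i,v_i)=-\tfrac12\partial_i^2\Po$. These say precisely that $\mathcal{A}_{\vv}$ and $\mathcal{C}$ agree on the generators $v_i$, both for the differential and for the quadratic (Clifford) relations. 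I would equip $\mathcal{C}$ with the standard increasing filtration by Clifford word-length and $\mathcal{A}_{\vv}=R\otimes E$ with the increasing filtration by exterior degree; both are exhaustive and finite, of length $n$, and in each case the associated graded, as an $A_\infty$-algebra over $R$, is the \emph{formal} exterior algebra $R\otimes\Lambda V$. For $\mathcal{C}$ this is the symbol identification $\gr\Cl(-\tfrac12\Hess(\Po))\cong\Lambda(R\otimes V)$ together with the fact that $\diff\Po\contract\bullet$ strictly lowers word-length; for $\mathcal{A}_{\vv}$ it is because $\muvv^1$ and $\muvv^{\geq3}$ strictly lower exterior degree (the undeformed parts of the operations of $\mathcal{A}$ vanish in arity $\geq3$, and the top-degree part of $\muvv^1$ cancels, as in the computation $\vv\wedge a+(-1)^{|a|}a\wedge\vv=0$) while $\gr\muvv^2$ is the wedge product.

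First I would write down an explicit $R$-module isomorphism $g\colon\mathcal{C}\to\mathcal{A}_{\vv}$ sending an ordered monomial $v_{i_1}\cdots v_{i_k}$ with $i_1<\dots<i_k$ to the right-nested product $\muvv^2(v_{i_1},\muvv^2(v_{i_2},\dots))$, extended $R$-linearly. On associated graded this is the identity of $R\otimes\Lambda V$, so $g$ is an isomorphism of filtered modules. Moreover $g$ is a chain map: in the arity-two $A_\infty$-relation for $\mathcal{A}_{\vv}$ the would-be obstructing terms — the curvature-type term $\sum_k\mu^k(\vv,\dots,\vv)=\Po$ and the three $\muvv^3$-terms — all have an input that is an $R$-multiple of the strict unit, hence vanish, so $\diff_{\vv}$ is an honest graded derivation of $\muvv^2$; since $\diff_{\vv}v_i=-\partial_i\Po$ is a scalar and $\diff_\mathcal{C}$ is the derivation with the same values on generators, a short telescoping computation (routine sign bookkeeping aside) shows $g\circ\diff_\mathcal{C}=\pm\diff_{\vv}\circ g$. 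Thus $g$ is already a filtered chain isomorphism, which yields $\rH^*(\mathcal{A}_{\vv})\cong\rH^*(\mathcal{C})$ as $R$-modules.

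It then remains to make this compatible with products. Both $\muvv^2$ and the Clifford product $\mu_\mathcal{C}^2$ are chain maps out of the respective tensor-square complexes (again by the derivation property), and $g$ is a chain map, so the multiplicativity defect
\[
\delta \;:=\; g\circ\mu_\mathcal{C}^2 \;-\; \muvv^2\circ(g\otimes g)\;\colon\;\mathcal{C}\otimes_R\mathcal{C}\longrightarrow\mathcal{A}_{\vv}
\]
is a cocycle, and $\rH^*(g)$ is multiplicative exactly when $\delta$ is null-homotopic; transporting along the isomorphism $g$, this is the statement that the transported product on $(\Cl(-\tfrac12\Hess(\Po)),\diff_\mathcal{C})$ differs from the Clifford product by a Hochschild coboundary. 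The key point is that $\delta$ vanishes on $V\otimes_R V$: for $i<j$ one has $g(v_iv_j)=\muvv^2(v_i,v_j)$, while $g(v_jv_i)=-g(v_iv_j)-\partial_i\partial_j\Po$ matches $\muvv^2(v_j,v_i)$, and $g(v_iv_i)=-\tfrac12\partial_i^2\Po$ matches $\muvv^2(v_i,v_i)$; consequently the image of $\delta$ under the Hochschild--Kostant--Rosenberg / bounding-cochain map $\phi\mapsto\sum_k\phi^k(\vv,\dots,\vv)$ is zero. Since the filtrations are finite in each degree, the same completeness argument used to prove \cref{thmHHisom} applies, and the HKR description of the Hochschild cohomology of the formal exterior algebra over $R$ shows that a cocycle with vanishing HKR invariant cobounds; hence the two products agree on cohomology. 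The resulting isomorphism $\rH^*(\mathcal{C})\xrightarrow{\sim}\rH^*(\mathcal{A}_{\vv})$ is unital because $g$ is strictly unital, and one checks it is independent of the auxiliary choices (different ordered bases) that went into $g$.

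The hard part will be this last step — upgrading the module isomorphism $\rH^*(g)$ to a ring isomorphism by showing that $\delta$ cobounds. This is a small-scale instance of the obstruction-theoretic lifting discussed in \cref{sscObstructionTheory}, and carrying it out requires setting up the reduced, filtered Hochschild complex in the present curved, over-$R$ setting and identifying the relevant obstruction invariant with a quantity determined entirely by the already-matched chain-level formulas for $\muvv^1$ and $\muvv^2$ on generators, whence its vanishing. (Equivalently, one can avoid transporting and instead upgrade $g$ directly to a strictly unital filtered $A_\infty$-morphism $\mathcal{C}\to\mathcal{A}_{\vv}$, constructing its higher components by induction on arity with obstructions vanishing for the same reason; finiteness of the filtration then makes such a morphism automatically a quasi-isomorphism, and being an $A_\infty$-morphism of homotopy-associative algebras it induces a unital $R$-algebra isomorphism on cohomology.)
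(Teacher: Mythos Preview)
Your chain-level isomorphism $g$ is essentially the inverse of the map $\theta\colon v_I\mapsto\overline{v}_I$ that the paper writes down (with left-nested products $v_I=((v_{i_1}\cdot v_{i_2})\cdots)\cdot v_{i_r}$ rather than right-nested, which is immaterial), and your verification that it is a chain map is correct: the curvature $\muvv^0=\Po$ is a unit multiple, so the $\muvv^3$ terms in the arity-two relation vanish by strict unitality, giving the Leibniz rule you need.

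The gap is in the multiplicativity step. Your defect $\delta$ is a cocycle in the complex $\Hom_R(\mathcal{C}\otimes_R\mathcal{C},\mathcal{A}_{\vv})$ with its internal differential, but it is \emph{not} a Hochschild cocycle for $\mathcal{C}$ or for the associated graded exterior algebra: viewed as a pure length-$2$ Hochschild cochain, its Hochschild differential has a nonzero length-$3$ component in general. So the appeal to ``a cocycle with vanishing HKR invariant cobounds'' does not apply --- the HKR isomorphism classifies full Hochschild cocycles $(\phi^k)_{k\geq 0}$, not single-arity chain maps. To rescue the obstruction-theoretic route you would have to extend $\delta$ to higher arities (equivalently, build the higher components of an $A_\infty$-morphism $\mathcal{C}\to\mathcal{A}_{\vv}$ inductively) and control the resulting HKR invariant $\sum_k\delta^k(\vv,\dots,\vv)$, which need not vanish just because $\delta^2(\vv,\vv)=0$. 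This is real work, not a formal consequence of what you have already checked.

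The paper sidesteps this entirely. Since $\partial_i\Po=-\diff_{\vv}v_i$ is exact, $\rH^*(\mathcal{A}_{\vv})$ is already a $\Jac(\Po)$-module, and the universal coefficient map embeds it as an algebra into $\rH^*(\mathcal{A}_{\vv}\otimes_R\Jac(\Po))$. After tensoring with $\Jac(\Po)$ the differential $\diff_{\vv}$ vanishes identically (by your formula for $\diff_{\vv}v_I$), so the arity-three $A_\infty$-relation forces $\cdot$ to be \emph{strictly} associative there, and the Clifford relations on generators identify $\mathcal{A}_{\vv}\otimes_R\Jac(\Po)\cong\Cl(-\tfrac12\Hess(\Po))\otimes_R\Jac(\Po)$ as an honest algebra. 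Doing the same for $\mathcal{C}$ and noting that the chain isomorphism $\theta$ makes the two embeddings have the same image finishes the proof. The trick is that killing the differential kills the non-associativity, so multiplicativity becomes automatic rather than an obstruction problem.
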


\begin{proof}
We'll show that $\rH^*(\calA_{\vv})$ and $\rH^*(\mathcal{C})$ canonically embed into $\Cl(-\tfrac{1}{2}\Hess(\Po)) \otimes_R \Jac(\Po)$, where $\Jac(\Po)$ is the Jacobian algebra $R/(\partial_i \Po)$, and that these embeddings have the same image.

For the first part, we claim the following sequence of canonical isomorphisms and embeddings
\begin{equation}
\label{eqUCTembedding}
\rH^*(\calA_{\vv}) \cong \rH^*(\calA_{\vv}) \otimes_R \Jac(\Po) \hookrightarrow \rH^*(\calA_{\vv} \otimes_R \Jac(\Po)) \cong \Cl(-\tfrac{1}{2}\Hess(\Po)) \otimes_R \Jac(\Po),
\end{equation}
and similarly for $\mathcal{C}$.  The middle inclusion map comes from the universal coefficient theorem, whilst the first isomorphism follows from the fact that $\partial_i \Po = -\diff_{\vv}v_i$ for each $i$, so $\partial_i \Po$ acts as $0$ on $\rH^*(\calA_{\vv})$. Analogous arguments apply to $\mathcal{C}$.  We just need to justify the second isomorphism, and for $\mathcal{C}$ this holds simply because the differential $-\diff\Po \contract \bullet$ vanishes after tensoring with $\Jac(\Po)$.

To complete the first part it remains to deal with the second isomorphism for $\calA_{\vv}$ (i.e.~the second isomorphism in \eqref{eqUCTembedding}).  To do this, for each $r$-tuple $I = (i_1 < \dots < i_r)$ in $\{1, \dots, n\}$ let $v_I$ denote the element of $\calA_{\vv}$ given by $((v_{i_1}\cdot v_{i_2})\dots)\cdot v_{i_r}$.  Reducing mod $\m$, $(\calA_{\vv}, \diff, \cdot)$ becomes the exterior algebra $(E, 0, \wedge)$, and the $v_I$ become the standard $R_0$-basis.  Before reducing, the $v_I$ therefore form an $R$-basis for $\calA_{\vv}$.  By the Leibniz rule, for all $I = (i_1 < \dots < i_r)$ we have
\[
\diff_{\vv} v_I = \sum_{j=1}^r (-1)^{j-1} (\diff v_{i_j}) v_{I \setminus \{i_j\}} = \sum_{j=1}^r (-1)^j \frac{\partial\Po}{\partial x_{i_j}} v_{I \setminus \{i_j\}}.
\]
In particular, $\diff_{\vv}$ vanishes modulo the ideal $(\partial_i \Po)$, so $\calA_{\vv} \otimes_R \Jac(\Po)$ has zero differential and is associative (consider the $3$-ary $A_\infty$-relation).  Moreover, the $v_I$ form a $\Jac(\Po)$-basis for it and the $v_i$ satisfy the relations
\[
\Big(\sum_i a_i v_i \Big) \cdot \Big(\sum_j a_j v_j\Big) = \frac{1}{2}\Hess(\Po) \Big(\sum_i a_i v_i\Big),
\]
so we conclude that it is canonically isomorphic to $\Cl(-\tfrac{1}{2}\Hess(\Po)) \otimes_R \Jac(\Po)$.

We thus have the claimed embeddings $\rH^*(\calA_{\vv}), \ \rH^*(\mathcal{C}) \hookrightarrow \Cl(-\tfrac{1}{2}\Hess(\Po)) \otimes_R \Jac(\Po)$ and we're left to identify their images.  For this, for each $I$ let $\overline{v}_I$ denote the element $v_{i_1}\dots v_{i_r}$ in $\mathcal{C}$.  There is no need for brackets here since $\mathcal{C}$ is already associative.  These $\overline{v}_I$ form an $R$-basis for $\mathcal{C}$, and the Leibniz rule gives
\[
-\diff \Po \contract \overline{v}_I = \sum_{j=1}^r (-1)^j \frac{\partial \Po}{\partial x_{i_j}} \overline{v}_{I\setminus\{i_j\}},
\]
so the $R$-linear map $\theta : \calA_{\vv} \to \mathcal{C}$ given by $v_I \mapsto \overline{v}_I$ is an isomorphism of chain complexes (ignoring the algebra structures).  The diagram
\[
\begin{tikzcd}
\rH^*(\calA_{\vv}) \arrow{rr}{\rH^*(\theta)} \arrow[hook]{dr} & & \rH^*(\mathcal{C}) \arrow[hook']{dl}
\\ & \Cl(-\tfrac{1}{2}\Hess(\Po)) \otimes_R \Jac(\Po)
\end{tikzcd}
\]
commutes and since the horizontal arrow is an isomorphism of $R$-modules we get the result.
\end{proof}

\begin{rmk}
Reducing the embedding of $\calA_{\vv}$ modulo $\m$ recovers the isomorphism $\rH^*(\calA) \cong \Cl(Q)$, where $Q$ is the form on $V$ given by the quadratic part of $-\Po$.
\end{rmk}

Combining \cref{thmHHisom} with \cref{thmHAv} we arrive at

\begin{cor}
\label{corHHA}
There is a canonical unital $R_0$-algebra isomorphism
\begin{flalign*}
&& \HH^*(\calA) &\cong \rH^*(\Cl(-\tfrac{1}{2}\Hess(\Po)), -\diff\Po \contract \bullet). && \qed
\end{flalign*}
\end{cor}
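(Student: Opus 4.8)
The plan is simply to compose the two isomorphisms established in \cref{thmHHisom,thmHAv}. First I would invoke \cref{thmHHisom}, which gives a canonical isomorphism of unital $R_0$-algebras $\HH^*(\mathcal{A}) \cong \rH^*(\mathcal{A}_{\vv})$, realised at the cochain level by the explicit map $P_{\vv}$ of \eqref{eqHHmap}. Then I would invoke \cref{thmHAv}, which gives a canonical isomorphism of unital $R$-algebras $\rH^*(\mathcal{A}_{\vv}) \cong \rH^*(\mathcal{C})$, where $\mathcal{C}$ is the dg-algebra $(\Cl(-\tfrac{1}{2}\Hess(\Po)), -\diff\Po \contract \bullet)$; restricting scalars along $R_0 \hookrightarrow R$, this is in particular an isomorphism of unital $R_0$-algebras. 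Composing the two yields the desired canonical isomorphism \eqref{eqHHisom} of unital $R_0$-algebras.

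The only two points that deserve a word are multiplicativity and canonicity. Multiplicativity is immediate since both component isomorphisms are unital $R_0$-algebra maps: $P_{\vv}$ is a strictly unital $A_\infty$-algebra map (so $\rH^*(P_{\vv})$ is a unital algebra homomorphism), and the isomorphism of \cref{thmHAv} is shown to intertwine the products via the commuting triangle involving the embeddings into $\Cl(-\tfrac{1}{2}\Hess(\Po)) \otimes_R \Jac(\Po)$. Canonicity likewise passes to the composite, since neither isomorphism involves auxiliary choices — the first is induced by the explicit $P_{\vv}$, and the second by the (choice-free) universal coefficient maps together with the explicit chain isomorphism $\theta$ of \cref{thmHAv}.

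There is no genuine obstacle at this stage: all of the difficulty has already been absorbed into the proofs of \cref{thmHHisom,thmHAv} — the spectral-sequence plus Eilenberg--Moore comparison argument on the one hand, and the identification of $\rH^*(\mathcal{A}_{\vv})$ with $\rH^*(\mathcal{C})$ via the Clifford dg-algebra on the other. The corollary is a purely formal consequence. As a consistency check, specialising to $\Po = 0$ gives $\Cl(0) = \Lambda(R \otimes V) = E_R$ with zero differential, so the right-hand side collapses to $R \otimes E$ and one recovers the classical HKR description of $\HH^*(E)$.
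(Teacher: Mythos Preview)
Your proposal is correct and matches the paper's approach exactly: the corollary is stated there with a \qed symbol immediately after the sentence ``Combining \cref{thmHHisom} with \cref{thmHAv} we arrive at'', so the paper also treats it as the formal composition of those two isomorphisms. Your remarks on multiplicativity and canonicity are accurate elaborations of what the paper leaves implicit.
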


\begin{ex}
Suppose that $R_0$ is a field of characteristic $2$, that $V$ is the one-dimensional $R_0$-vector space $\langle v\rangle$, and that $\calA = R_0[v]/(v^2+1)$, with higher $A_\infty$-operations $\mu^k$ satisfying $\mu^k=0$ whenever $k$ is odd (here $v$ lies in degree $1$ mod $2$).  This has
\[
\Po = x^2 + c_4x^4 + c_6x^6 + \dots
\]
for some $c_4, c_6, \dots$ in $R_0$, so $\diff\Po = 0$ and
\[
\HH^*(\calA) \cong R_0 \llbracket x \rrbracket [v]/(v^2+1+c_6 x^4 + c_{10} x^8 + \dots).\qedhere
\]
\end{ex}

We already know that $\rH^*(\calA_{\vv})$ is graded-commutative, since it is isomorphic to $\HH^*(\calA)$, but we actually have

\begin{prop}
\label{propCentre}
The image of the embedding $\rH^*(\calA_{\vv}) \hookrightarrow \Cl(-\tfrac{1}{2}\Hess(\Po)) \otimes_R \Jac(\Po)$, or equivalently the image of $\rH^*(\mathcal{C})$, is contained in the centre of the codomain.
\end{prop}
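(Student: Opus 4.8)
The plan is to show that each element of the image graded-commutes with a set of algebra generators of $\mathcal{C}' \coloneqq \Cl(-\tfrac{1}{2}\Hess(\Po)) \otimes_R \Jac(\Po)$, by differentiating the defining equation of a Hochschild cocycle. Here `centre' is meant in the graded sense appropriate to the $\Z/2$-grading, which is precisely what makes the statement stronger than the graded-commutativity of $\rH^*(\mathcal{A}_{\vv})$ recorded just above. By \cref{thmHAv} the images of $\rH^*(\mathcal{A}_{\vv})$ and of $\rH^*(\mathcal{C})$ in $\mathcal{C}'$ coincide, so it suffices to treat $\rH^*(\mathcal{A}_{\vv})$.

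First I would describe the image concretely. By \cref{thmHHisom} the map $\rH^*(P_{\vv})$ is surjective, so an arbitrary class of $\rH^*(\mathcal{A}_{\vv})$ is $[c]$ with $c = \sum_{k \geq 0} \phi^k(\vv, \dots, \vv) = (\phi_{\vv})^0$ for some reduced Hochschild cocycle $\phi$ of $\mathcal{A}$, where $\phi_{\vv} \in \overline{CC}(\mathcal{A}_{\vv})$ is the image of $\phi$ under the `insert $\vv$' construction \eqref{eqHHwbc}. Since that construction is a chain map, $\phi_{\vv}$ is again a Hochschild cocycle; its length-zero component being a $\muvv^1$-cocycle is just the statement $\muvv^1 c = 0$ (the a priori curvature contribution here, coming from $\muvv^0 = \Po$, is killed by reducedness). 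The image of $[c]$ under the embedding is $\overline{c} \coloneqq q(c)$, where $q \colon \mathcal{A}_{\vv} \to \mathcal{A}_{\vv} \otimes_R \Jac(\Po) = \mathcal{C}'$ is reduction modulo $(\partial_1\Po, \dots, \partial_n\Po)$; by the proof of \cref{thmHAv} this $q$ is a homomorphism of $\Z/2$-graded algebras onto $\mathcal{C}'$, the target product being the one induced by $\muvv^2$ via \eqref{eqdgAsAinfinity}.

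The crux is to unpack the length-one component of $\diff(\phi_{\vv}) = 0$ on a single input $v_i$. Only $\muvv^1$ and $\muvv^2$ can contribute to this component (together with, in the Hochschild differential, the curvature $\muvv^0 = \Po$, whose contribution vanishes by reducedness), and one obtains for each $i$ a relation of the schematic form
\[
\muvv^2\bigl(c, v_i\bigr) \pm \muvv^2\bigl(v_i, c\bigr) \;=\; \pm\,\muvv^1\bigl(\phi_{\vv}^1(v_i)\bigr) \,\pm\, \phi_{\vv}^1\bigl(\muvv^1 v_i\bigr).
\]
Under \eqref{eqdgAsAinfinity} the left-hand side, after applying $q$, is (up to an overall sign) the graded commutator of $\overline{c}$ with $\overline{v}_i$ in $\mathcal{C}'$. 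On the right, the first term lies in $(\partial_j\Po)\mathcal{A}_{\vv}$ because $\muvv^1$ does, by \eqref{eqdiffv}; and the second does too, since $\muvv^1 v_i = \pm\partial_i\Po$ is a scalar in that ideal, whence $\phi_{\vv}^1(\muvv^1 v_i) = \pm\partial_i\Po\cdot\phi_{\vv}^1(1_E)$. Thus $q$ annihilates the right-hand side, so $\overline{c}$ graded-commutes with each $\overline{v}_i$. As $\mathcal{C}'$ is generated as an algebra by the central subring $\Jac(\Po)$ together with $\overline{v}_1, \dots, \overline{v}_n$, and $[\overline{c}, -]$ obeys the graded Leibniz rule, it follows that $\overline{c}$ is central, which finishes the proof.

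The step requiring the most care is this length-one expansion of the Hochschild cocycle equation for the \emph{curved} $A_\infty$-algebra $\mathcal{A}_{\vv}$: one must keep track of the Seidel signs, check that the higher operations $\muvv^{\geq 3}$ genuinely cannot enter, and verify that the curvature term is annihilated by passing to the reduced complex. Conceptually this is nothing but the standard fact that the image of Hochschild cohomology inside an algebra lands in its (graded) centre, transported through the identifications of \cref{thmHHisom,thmHAv}; one could alternatively factor the embedding through $\HH^*_{\Jac(\Po)}(\mathcal{C}')$ and invoke that fact, but the hands-on computation above sidesteps having to set up the base-change map on Hochschild complexes.
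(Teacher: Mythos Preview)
Your proof is correct and takes a genuinely different route from the paper's. The paper works entirely on the $\mathcal{C}$ side: given a $(-\diff\Po\contract\bullet)$-cocycle $a = \sum_I a_I \overline{v}_I$ of pure length $r$, it computes $v_l a - (-1)^r a v_l$ directly from the Clifford relation $v_l v_{i_s} + v_{i_s} v_l = -\partial_l\partial_{i_s}\Po$, and a short telescoping manipulation shows this difference equals $\partial_l(\diff\Po \contract a)$ modulo $(\partial_i\Po)$, which vanishes since $a$ is closed. You instead pull the class back through $\rH^*(P_{\vv})$ to a Hochschild cocycle $\phi$ and read off the graded commutator with $v_i$ from the length-one component of the cocycle equation for $\phi_{\vv}$---this is the familiar ``$\HH^*$ lands in the graded centre'' principle, transported through the identifications of \cref{thmHHisom,thmHAv}. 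The paper's computation is self-contained (it does not invoke surjectivity of $P_{\vv}$) and yields the explicit identity above; your approach makes the conceptual origin transparent and avoids the Clifford-algebra bookkeeping, at the cost of relying on \cref{thmHHisom}.
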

\begin{proof}
We'll work with the image of $\rH^*(\mathcal{C})$.  With notation as in the proof \cref{thmHAv}, suppose $a \coloneqq \sum_I a_I \overline{v}_I$ is a cocycle in $\mathcal{C}$.  Since the differential $-\diff \Po \contract \bullet$ decreases the length $|I|$ of $I$ by $1$, we may assume that the set $\{|I|: a_I \neq 0\}$ has a single element $r$, and we claim that for all $l$ we have $v_la = (-1)^r av_l$ modulo $(\partial_i \Po)$.  Then $a$ graded-commutes with each $v_l$ in $\Cl(-\tfrac{1}{2}\Hess(\Po)) \otimes_R \Jac(\Po)$, and hence lies in the centre.

For each $I=(i_1 < \dots < i_r)$, each $l \in \{1, \dots, n\}$, and each $s \in \{1, \dots, r\}$, consider
\[
\delta_I^{l,s} \coloneqq a_I v_{i_1} \dots v_{i_{s-1}} v_l v_{i_s} \dots v_{i_r} + a_I v_{i_1} \dots v_{i_s} v_l v_{i_{s+1}} \dots v_{i_r}.
\]
Since the algebra underlying $\mathcal{C}$ is $\Cl(-\tfrac{1}{2}\Hess(\Po))$, we have
\[
\delta_I^{l,s} = \frac{\partial^2 \Po}{\partial x_l \, \partial x_{i_s}} a_I v_{i_1} \dots \widehat{v}_{i_s} \dots v_{i_r} = \frac{\partial}{\partial x_l} \Big(\frac{\partial \Po}{\partial x_{i_s}} a_I v_{i_1} \dots \widehat{v}_{i_s} \dots v_{i_r}\Big) - \frac{\partial \Po}{\partial x_{i_s}} \frac{\partial a_I}{\partial x_l} v_{i_1} \dots \widehat{v}_{i_s} \dots v_{i_r},
\]
where $\widehat{\phantom{v}}$ denotes omission.  The final term on the right-hand side vanishes modulo $(\partial_i \Po)$, so we get
\[
v_la - (-1)^r av_l = \sum_I \sum_{s=1}^r (-1)^{s-1} \delta_I^{l,s} = \frac{\partial}{\partial x_l} \sum_I \sum_{s=1}^r (-1)^{s-1}\frac{\partial \Po}{\partial x_{i_s}} a_I v_{i_1} \dots \widehat{v}_{i_s} \dots v_{i_r} \mod (\partial_i \Po).
\]
The double sum on the right-hand side is exactly $\diff\Po \contract a$, and this vanishes by the assumption that $a$ is a cocycle, proving the claim.
\end{proof}


\section{Monotone Lagrangian tori}
\label{secMonotoneTori}

In this short final section we change direction slightly, and move from pure algebra into Floer theory, with which some familiarity is assumed.  We begin by discussing a weakening of the definition of a superfiltered $A_\infty$-deformation of $E$, before recapping the Floer theory of monotone tori, and then going on to state and prove our main results.


\subsection{Weak superfiltered deformations}

In this section we shall consider an $A_\infty$-algebra $\calA$ arising from the Floer theory of a monotone Lagrangian torus $L$.  This $A_\infty$-algebra is naturally superfiltered, minimal, and cohomologically unital.  It also comes with a natural identification of the algebra $\gr \rH^*(\calA)$ with $E = \Lambda V$, where $V = \rH^1(L; R_0)$.  Moreover, the latter algebra identification extends non-canonically to an $A_\infty$-isomorphism between $\gr \calA$ and $E$.  Such an $\calA$ is thus almost a superfiltered $A_\infty$-deformation of $E$.  Motivated by this we introduce the following definition.

\begin{defn}
A \emph{weak superfiltered $A_\infty$-deformation of $E$} is a superfiltered $A_\infty$-algebra $\calA$, which is cohomologically unital and minimal, and which is equipped with an algebra isomorphism $\phi : \gr \rH^*(\calA) \to E$ that extends to a ($\Z$-graded) $A_\infty$-isomorphism $\Phi : \gr \calA \to E$ ($\Phi$ should extend $\phi$ in the sense that $\Phi^1 = \phi$).  The map $\phi$, but not $\Phi$, is part of the data.
\end{defn}

We will refer to our previous definition as a \emph{strong} superfiltered $A_\infty$-deformation if we wish to distinguish it from this new version.

The goal of the present subsection is to prove the following result.

\begin{lem}
Given a weak superfiltered $A_\infty$-deformation $\calA$ of $E$, there exists a strong superfiltered $A_\infty$-deformation $\calA_\mathrm{str}$ and an isomorphism $\Sigma : \calA \to \calA_\mathrm{str}$ of cohomologically unital superfiltered $A_\infty$-algebras, such that $\gr \Sigma^1$ intertwines the identifications of $\gr \rH^*(\calA)$ and $\gr \rH^*(\calA_\mathrm{str})$ with $E$.  Such an $\calA_\mathrm{str}$, which we call a \emph{strengthening} of $\calA$, is canonical up to $1$-equivalence.
\end{lem}
\begin{proof}
Fix a choice of $A_\infty$-isomorphism $\Phi : \gr \calA \to E$ extending the given algebra isomorphism $\phi : \gr \rH^*(\calA) \to E$, and lift $\Phi^1 = \phi$ to an identification of the module underlying $\calA$ with $E$, as in \cref{exAinftyDef}.  In this way we think of $\calA$ as a cohomologically unital and minimal superfiltered $A_\infty$-structure on $E$, such that the leading term of $\mu^2_\calA$ coincides with $\mu^2_E$.  We can likewise think of $\Phi$ as a ($\Z$-graded) formal diffeomorphism of $E$ such that $\Phi^1 = \id$ and such that the leading terms of the operations on $\Phi_* \calA$ agree with those on $E$.

This $\Phi_* \calA$ is almost what we want: it's a minimal superfiltered $A_\infty$-structure on $E$ such that the leading terms of the operations coincide with the standard operations on $E$.  The only problem is that it's cohomologically unital, rather than strictly unital.  Recall, however, that we can modify the operations by another formal diffeomorphism, say $\Psi$, to achieve strict unitality.  Moreover, this can be done in a way which respects the $\Z/2$-grading and filtration---for an explicit recipe see Seidel \cite[Lemma 2.1]{SeidelBook}.  Since the leading terms were already strictly unital they are unaffected.  Let $\calA_\mathrm{str} = \Psi_*\Phi_* \calA$ be the result of this procedure.  By construction this is a strong superfiltered $A_\infty$-deformation of $E$, and the map $\Psi \circ \Phi$ is the required $\Sigma : \calA \to \calA_\mathrm{str}$.

Now suppose that $\Sigma' : \calA \to \calA_\mathrm{str}'$ is another strengthening of $\calA$.  The map $\Sigma' \circ \Sigma^{-1} : \calA_\mathrm{str} \to \calA_\mathrm{str}'$ is almost a $1$-equivalence; the only problem is that it's only cohomologically unital.  But by a similar procedure to the modification of $\Phi_*\calA$ we may make it strictly unital in a way that preserves its other properties.
\end{proof}

\begin{defn}
\label{defWeakPotential}
The \emph{disc potential} of a weak superfiltered deformation $\calA$ of $E$ is the disc potential of any strengthening.  By the previous result this is well-defined up to $1$-equivalence.
\end{defn}


\subsection{Floer algebras of monotone tori}
\label{sscFloerAlgebras}

For the remainder of the paper, let $L$ denote a monotone Lagrangian $n$-torus inside a symplectic manifold $(X, \omega)$ which is compact or tame at infinity.  Associated to $L$ is its (super)potential $W_L : \rH^1(L; R_0^\times) \to R_0$, defined on the space of rank $1$ local systems on $L$.  This sends a local system $\rho \in \rH^1(L; R_0^\times)$ to the count of rigid holomorphic discs bounded by $L$, each weighted by the holonomy of $\rho$ around its boundary.  A precise description is given in \cite[Definition 2.2]{ChoHongLauTorus}; we set their variable $T$ to be $1$.  Note that $W_L$ can be viewed as an element of the group algebra $R' \coloneqq R_0 [ \rH_1(L; \Z) ]$.

For each local system $\rho$, we have an object $(L, \rho)$ in the $\lambda$-summand $\Fuk_\lambda(X, \omega)$ of the monotone Fukaya category of $X$, where $\lambda = W_L(\rho)$.  See \cite{SheridanFano} for the detailed construction of this category.  The endomorphism algebra of $(L, \rho)$ is the Floer algebra $CF^*((L, \rho), (L, \rho))$, which is a cohomologically unital $\Z/2$-graded $A_\infty$-algebra over $R_0$.  Roughly, this is a deformation of the ordinary cohomology algebra of $L$, obtained by adding quantum corrections from holomorphic discs.  Monotonicity ensures that all quantum corrections are degree-decreasing, so the algebra is naturally superfiltered.  Moreover, if $\rho$ is a critical point of $W_L$ then the Floer cohomology $\HF^*((L, \rho), (L, \rho))$ can be identified with $\rH^*(L; R_0)$, and this becomes a canonical algebra isomorphism at the associated graded level.

From now on, assume $\rho$ is a critical point of $W_L$ with critical value $\lambda$.  The precise Floer algebra $CF^*((L, \rho), (L, \rho))$ depends on a choice of model and auxiliary data.  We shall use a \emph{pearl model}, as outlined in \cite{ChoHongLauTorus} following Biran--Cornea \cite{biran2007quantum}.  This requires a choice of Morse data on $L$ as well as various perturbation data.  We shall take a perfect Morse function, and place more constraints on it later.  The resulting $CF^*((L, \rho), (L, \rho))$, which we denote by $\calA$, is minimal, and the leading terms of its $A_\infty$-operations describe the $A_\infty$-structure on $\rH^*(L; R_0)$.  Tori are well-known to be formal, meaning that the $A_\infty$-structure on $\rH^*(L; R_0)$ is isomorphic to the one with vanishing higher operations.  Therefore $\calA$ is precisely a weak superfiltered $A_\infty$-deformation of $E = \Lambda V$, where $V = \rH^1(L; R_0)$.

\begin{warn}
\label{warnOrientationScheme}
The signs appearing in $\calA$ depend on a choice of orientation scheme for the Floer-theoretic moduli spaces.  This is separate from the choice of $A_\infty$-algebra sign conventions: for example, if one author uses an orientation scheme which produces operations $\mu^k$ satisfying our standard $A_\infty$-relations \eqref{eqAinfinityRelations}, then another may twist the orientation scheme to give operations $\widetilde{\mu}^k \coloneqq (-1)^k \mu^k$, and these still satisfy the same relations.  Our identification of $\gr \rH^*(\calA)$ with $E$ fixes the orientations of the moduli spaces defining $\mu^2$---we need the leading term of $(-1)^{\lvert a_1 \rvert} \mu^2(a_2, a_1)$ to coincide with $a_2 \wedge a_1$---but we are free to twist the orientations of the moduli spaces defining $\mu^1$.  Our results will all therefore depend on an unknown sign $\eps \in \{\pm1\}$ that arises from this $\mu^1$ ambiguity.  (There may be additional ambiguity in the higher operations, but it does not affect our results.)
\end{warn}


\subsection{Statement of results}

In \cite{ChoHongLauTorus} Cho--Hong--Lau introduced a geometric version of the localised mirror functor.  Stated precisely, the output of their construction is
\begin{thm}[{\cite[Theorem 1.1]{ChoHongLauTorus}}]
There is a geometrically-defined $A_\infty$-functor
\begin{equation}
\label{eqLMFtorus}
\pushQED{\qed}
\LMF{L}_\mathrm{geom} : \Fuk_\lambda(X, \omega) \to \mf(R', W_L - \lambda).\qedhere
\popQED
\end{equation}
\end{thm}
This functor is morally equivalent to the localised mirror functor described in \cref{sscLocalisedMirror}, but replaces the algebraic description in terms of insertions of $\vv$ into the $A_\infty$-operations with a geometric description which takes the ordinary $A_\infty$-operations on the Fukaya category and modifies the definition by incorporating additional weights in $R'$.  One should think of the insertions of $\vv$ as giving the formal expansions of these weights.

\begin{rmk}
In the construction of \eqref{eqLMFtorus} the assumption of monotonicity can be weakened to `positivity'---see \cite[Assumption 2.1]{ChoHongLauTorus}---at the expense of working over a Novikov ring and (a priori at least) making the construction dependent on a specific choice of almost complex structure on $X$.  As Cho--Hong--Lau remark, if one is happy to employ more sophisticated techniques then it should be possible in characteristic $0$ to weaken the assumption further, to unobstructedness of $L$.  In this case, however, one loses the filtration that is crucial to our arguments.
\end{rmk}

The category $\mf(R', W_L - \lambda)$ contains a superfiltered object $\mfE_0'$ corresponding to the skyscraper sheaf at $\rho$, and we denote the image of $(L, \rho)$ under $\LMF{(L, \rho)}_\mathrm{geom}$ by $\mfE'$.  Cho--Hong--Lau \cite[Theorems 9.1 and 9.4]{ChoHongLauTorus} showed (for $R_0 = \C$) that if $L$ is a monotone toric fibre of dimension at most $4$ then $\mfE'$ is isomorphic to $\mfE_0'$; moreover this isomorphism is via a `quantum change of variables'.  They conjectured \cite[Section 8]{ChoHongLauTorus} that such a quantum change of variables exists and provides an isomorphism between $\mfE'$ and $\mfE_0'$ for all monotone tori $L$.  Using our technique---keep track of filtrations and then use a spectral sequence---we prove this (\cref{propQuantumChangeOfCoordinates}) and obtain
\begin{mthm}[\cref{thmLocalMSText}]
\label{thmLocalMS}
The minimal model we construct for $\eend_{\mffilt}(\mfE_0')$ is a strengthening of $\calA$.
\end{mthm}
Expanding out the jargon, this means that the Floer algebra of $(L, \rho)$ is quasi-isomorphic as a cohomologically unital superfiltered $A_\infty$-algebra to the endomorphism algebra of $\mfE_0'$, in such a way that the induced isomorphism $\gr \HF^*((L, \rho), (L, \rho)) \cong \gr \rH^*(\eend(\mfE_0'))$ is compatible the identifications of both sides with $E$.  This result may be called \emph{local mirror symmetry}, since it matches a formal neighbourhood of $\rho$ in $\mf(R', W_L - \lambda)$ with the piece of $\Fuk_\lambda (X, \omega)$ split-generated by $(L, \rho)$.

In general, computing the $A_\infty$-structure on the Floer algebra is very difficult, and to the best of the author's knowledge the only previously known cases for monotone tori are the low-dimensional monotone toric fibres covered by Cho--Hong--Lau, and cases where the algebra is intrinsically formal (i.e.~\emph{any} $A_\infty$-structure on the underlying Clifford algebra is formal, meaning quasi-isomorphic to the one with vanishing higher operations).  Using a generation result of Evans--Lekili \cite[Corollary 1.3.1]{EvansLekiliGeneration}, Cho--Hong-Lau \cite[Corollary 1.3]{ChoHongLauTorus} proved global mirror symmetry for all compact toric Fano manifolds, but this does not directly give the full $A_\infty$-structure.

It is a folklore result that the disc potential of $CF^*((L, \rho), (L, \rho))$ is in some sense the same as $W_L - \lambda$, and our final result makes this precise.  This is a straightforward consequence of \cref{thmLocalMS} after relating the right-hand side of \eqref{eqLMFtorus} to the previously-appearing $\mf^\mathrm{(filt)}(R, w)$, which we do as follows.  Recall that we fixed a basis $v_1, \dots, v_n$ for $V$, so that $R = R_0\llbracket x_1, \dots, x_n \rrbracket$ where the $x_i$ are the dual coordinates on $V$.  Assume that the basis $v_1, \dots, v_n$ is induced from $\rH^1(L; \Z)$ and let $z_i$ be the corresponding coordinates in $R_0[\rH^1(L; \Z)] = R_0[z_1^{\pm 1}, \dots, z_n^{\pm n}]$.  Identifying $z_i$ with $\rho_i(1+\eps x_i)$, where $\rho_i$ is the $z_i$-coordinate of $\rho$ and $\eps$ is the sign from \cref{warnOrientationScheme}, we can view $R$ as the completion of $R_0[\rH^1(L; \Z)]$ at $\rho$, and take $w$ to be the expansion of $W_L - \lambda$ about this point.  Our assumption that $\lambda = W_L(\rho)$ and that $\rho$ is a critical point of $W_L$ ensures $w$ lies in $\m^2 \subset R$.

The statement is then

\begin{mthm}[\cref{corPoComputation}]
\label{thmPoComputation}
The disc potential of $\calA$ in the sense of \cref{defWeakPotential} is $1$-equivalent to the formal expansion of $W_L - \lambda$ about $\rho$, under the identification $z_i = \rho_i(1+\eps x_i)$ described above.
\end{mthm}

\begin{rmk}
This identification depends on the choice of basis for $\rH_1(L; \Z)$, but that choice doesn't affect the $1$-equivalence class of the expansion.  Ignoring the sign $\eps$, there is another `obvious' identification we could try, namely $z_i = \rho_i + x_i$, but this would give the wrong answer.
\end{rmk}

\Cref{thmPoComputation} extends previous results of Cho \cite{ChoProducts} and Fukaya--Oh--Ohta--Ono \cite[Theorem 4.5]{FOOOToricI}, which deal with the case where $L$ is a toric fibre and the ground ring is a field of characteristic zero (the latter paper goes far beyond the monotone setting), and of Biran--Cornea \cite[Section 3.3]{BiranCorneaLagrangianTopology}, who computed the quadratic part for general monotone $L$.  It is interesting to note that our proof involves no Floer theory beyond the construction and basic properties of the Fukaya category and the localised mirror functor.


\subsection{The matrix factorisations}

The functor $\LMF{L}_\mathrm{geom}$ sends $(L, \rho)$ to a superfiltered matrix factorisation $\mfE'$ and provides a cohomologically unital superfiltered $A_\infty$-algebra map
\[
\Phi' : \calA \to \calB' \coloneqq \eend_{\mffilt}(\mfE').
\]
Using our pearl model, the underlying module of $\mfE'$ is $E_{R'} \coloneqq R' \otimes E$.  Our main task is to identify the leading term of the squifferential $\diff_{\mfE'}$ and of $(\Phi')^1$.

\begin{rmk}
Cho--Hong--Lau used `leading order term' in \cite{ChoHongLauTorus} to mean the next term down in the filtration; what we call the leading term they called the classical part.
\end{rmk}

First we must choose the `gauge hypertori' $H_i$.  To do this, begin by fixing a diffeomorphic identification $L \cong (S^1)^n$ such that $\langle v_i, \gamma_j\rangle = \delta_{ij}$, where $\gamma_j$ is the loop that goes once positively (anticlockwise) around the $j$th $S^1$ factor.  Now define $H_i$ to be the hypertorus $(S^1)^{i-1} \times \{p\} \times (S^1)^{n-i}$, co-oriented by the positive orientation of $S^1$, where $p$ is an arbitrarily chosen point in $S^1$.  The crucial property of $H_i$ is that intersecting a $1$-cycle with it corresponds to pairing the cycle with $v_i$.

Next we choose a Morse function $f$ on $L$.  For this, let $f_{S^1}$ be a perfect Morse function on $S^1$ with min at $q_0$ and max at $q_1$, where $p$, $q_0$, and $q_1$ are in clockwise order.  To determine signs we must orient the descending manifolds.  For $q_0$ we take the canonical orientation, whilst for $q_1$ we take the anticlockwise orientation.  Now define $f$ to be the product of $n$ copies of $f_{S^1}$, using our identification $L \cong (S^1)^n$.  The module underlying $\calA$ is the Morse cochain complex of $f$, which is the $n$th tensor power of the Morse cochain complex of $f_{S^1}$.  Given $I = (I_1, \dots, I_n) \in \{0, 1\}^n$ we denote by $q_I$ both the basis element $q_{I_1} \otimes \dots \otimes q_{I_n}$ of $\calA$ and the corresponding critical point $q_{I_1} \times \dots \times q_{I_n}$ of $f$.  The element $v_i$ is represented by $q_{e_i}$, where $e_i$ is the $i$th standard basis vector.  The Morse complex of $f$ (over $R_0$) is naturally identified with $\rH^*(L; R_0)$, and this gives our identification between $\gr \rH^*(\calA)$ and $E$.

The leading term of the squifferential on $\mfE'$ counts rigid Morse flowlines on $L$, with each flowline $\gamma$ weighted by a factor of
\begin{equation}
\label{eqWeightFactor}
\prod_{i=1}^n \Big(\frac{z_i}{\rho_i}\Big)^{H_i \cdot \gamma},
\end{equation}
where $H_i \cdot \gamma$ is the intersection number of $\gamma$ with $H_i$.  By analogy with \cref{sscLocalisedMirror}, the sign attached to a flowline contributing to $\diff_{\mfE'}a$ is $(-1)^{\lvert a\rvert}$ times the sign with which that flowline contributes to $\mu^1_\calA(a)$.  This in turn is the sign with which the flowline contributes to the differential on $\calA$ via \eqref{eqdgAinfinity}.

Similarly, for all $I$ and $J$, the leading term of $(\Phi')^1(q_J)(q_I)$ counts rigid perturbed Y-shaped Morse flow trees with inputs $q_I$ and $q_J$.  Each such tree is weighted by \eqref{eqWeightFactor}, where $\gamma$ is the path through the tree from the first input to the output, as shown in the right-hand part of \cref{figMorse}.  The sign is $(-1)^{\lvert q_I \rvert}$ times the sign with which the tree contributes to $\mu^2_\calA(q_J, q_I)$.  This is simply the sign with which the tree contributes to $q_J \wedge q_I$.
\begin{figure}[ht]
\begin{tikzpicture}

\def\r{2cm}
\def\blobsize{1.5mm}
\draw (0, 0) circle[radius=\r];
\draw (\r, 0) node[blob]{};
\draw (0, \r) node[blob]{};
\draw (0, -\r) node[blob]{};
\draw (\r, 0) node[anchor=west]{$p$};
\draw (0, -\r) node[anchor=north]{$q_0$};
\draw (0, \r) node[anchor=south]{$q_1$};

\draw (250:\r) node[blob](q0p){};
\draw (q0p) node[anchor=north]{$q_0'$};
\draw (110:\r) node[blob](q1p){};
\draw (q1p) node[anchor=south]{$q_1'$};

\draw[line width=0.5mm, opacity=0.3, decoration={markings, mark=at position 0.5 with {\arrow{>}}}, postaction=decorate] (-100:1.7cm) arc [radius =1.7cm, start angle=-100, end angle=-260];
\draw[line width=0.5mm, opacity=0.3, decoration={markings, mark=at position 0.5 with {\arrow{>}}}, postaction=decorate] (-80:1.7cm) arc [radius =1.7cm, start angle=-80, end angle=80];
\draw (-1.3cm, -0.1) node{$\gamma_+$};
\draw (1.3cm, -0.1) node{$\gamma_-$};

\begin{scope}[xshift=7cm]
\draw (-1, -\r) node[blob](qI){};
\draw (1, -\r) node[blob](qJ){};
\draw (0, \r) node[blob](out){};

\draw[snakeit, ->] (qI) -- (-0.5, -\r/2);
\draw[snakeit] (-0.5, -\r/2) -- (0, 0);
\draw[snakeit, ->] (qJ) -- (0.5, -\r/2);
\draw[snakeit] (0.5, -\r/2) -- (0, 0);
\draw[snakeit, ->] (0, 0) -- (0, \r/2);
\draw[snakeit] (0, \r/2) -- (out);

\draw (qI) node[anchor=north]{$q_I$};
\draw (qJ) node[anchor=north]{$q_J$};
\draw (out) node[anchor=south]{output};
\end{scope}

\begin{scope}[xshift=6.5cm]
\draw[rounded corners=0.5cm, decoration={markings, mark=at position 0.55 with {\arrow{>}}}, postaction=decorate] (-1, -\r) -- (0, 0) -- (0, \r);
\draw (-0.35, 0) node{$\gamma$};
\end{scope}

\end{tikzpicture}
\captionsetup{width=\linewidth}
\caption{Critical points and flowlines on $S^1$, and the trees computing the leading term of $(\Phi')^1(q_J)(q_I)$.\label{figMorse}}
\end{figure}

\begin{lem}
\label{lemSquifferentialLead}
The leading term of the squifferential on $\mfE'$ is $-\eps \vv' \wedge \bullet$, where $\vv' = \sum_i (z_i/\rho_i - 1) v_i$ and $\eps \in \{\pm 1\}$ is an unknown sign depending on the choice of orientation scheme.  (This defines the $\eps$ appearing in \cref{warnOrientationScheme}.)
\end{lem}
\begin{proof}
The rigid Morse flowlines are constant on $n-1$ of the $S^1$ factors, and flow from $q_0$ to $q_1$ on the other factor---say the $i$th one.  Restricting to this $i$th factor, there are two flowlines from $q_0$ to $q_1$, which we denote by $\gamma_\pm$ as shown in \cref{figMorse}.  The path $\gamma_-$ passes through $p$ positively (anticlockwise) so has weight $z_i/\rho_i$ according to \eqref{eqWeightFactor}, whilst $\gamma_+$ avoids $p$ so has weight $1$.  These paths contribute with signs $\pm \eps$ respectively, for some $\eps \in \{\pm 1\}$ (indepdendent of $i$), so the leading term of the squifferential on this factor looks like $-\eps(z_i/\rho_i-1)v_i \wedge \bullet$.  Passing back to $L$ itself, the Leibniz rule tells us that total leading term of the squifferential is $-\eps \vv' \wedge \bullet$.
\end{proof}

\begin{lem}
We can choose the perturbations for our Y-shaped trees so that the leading term of $(\Phi')^1(v_i)$ is $v_i \wedge \bullet$.
\end{lem}
\begin{proof}
We use the usual Morse function $f$ on the output leg and first input leg of the $Y$.  On the second input leg we use a deformation of $f$ which corresponds to perturbing $q_0$ and $q_1$ slightly to points $q_0'$ and $q_1'$.  The only requirement we make is that $q_1'$ lies slightly anticlockwise of $q_1$, as shown in \cref{figMorse}.  Now fix $J \in \{0, 1\}^n$ and consider the trees computing the leading term of $(\Phi')^1(v_i)(q_J)$.  These are precisely the trees computing $v_i \wedge q_J$, counting with the same signs, but weighted according to \eqref{eqWeightFactor}.  It remains to show that with our perturbations these weights are all $1$.

On the $i$th factor the tree must have inputs $q_0$ and $q_1'$ and output $q_1$, so the `$\gamma$-path' determining the weight is roughly $\gamma_-$.  In particular, it avoids $p$.  On each of the other factors the tree either has inputs $q_0$ and $q_0'$ and output $q_0$, or has inputs $q_1$ and $q_0'$ and output $q_1$.  In these cases the $\gamma$-path is the constant path at $q_0$ or at $q_1$ respectively, so again avoids $p$.  The upshot is that all of the trees avoid the gauge hypertori and hence are weighted by $1$ as needed.
\end{proof}

Now let $\m'$ denote the ideal of $R'$ generated by the $z_i/\rho_i-1$.  The fact that $\rho$ is a critical point of $W_L$ means that $W_L - \lambda$ lies in $(\m')^2$, and analogously to \cref{sscDefE0} we can pick $\ww'$ in $\m' \otimes V^\vee$ such that $\ww' \contract \vv' = W_L - \lambda$.  We can then define another superfiltered object $\mfE_0'$ in $\mf(R', W_L - \lambda)$ to have underlying module $R' \otimes E$ and squifferential
\[
\diff_{\mfE_0'} : a \mapsto -\eps(\vv' \wedge a + \ww' \contract a),
\]
where $\eps$ is as appears in \cref{lemSquifferentialLead} and \cref{warnOrientationScheme}.  The same argument as for \cref{propmfEqual,lemidERCocycle} shows

\begin{prop}
\label{propQuantumChangeOfCoordinates}
The identity map on the underlying modules can be corrected with lower order terms to a cocycle in $\hom^0_{\mffilt}(\mfE', \mfE_0')$.  Conjugation by this cocycle gives a superfiltered dg-algebra isomorphism $\Psi'$ from $\calB' = \eend_{\mffilt}(\mfE')$ to $\calB_0' \coloneqq \eend_{\mffilt}(\mfE_0')$.\hfill$\qed$
\end{prop}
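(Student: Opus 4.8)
The plan is to transcribe, almost verbatim, the argument of \cref{propmfEqual,lemidERCocycle}: first produce a degree-$0$ cocycle $i'$ in $\hom^0_{\mffilt}(\mfE', \mfE_0')$ whose leading term is $\id_{R'\otimes E}$, then inductively invert it and define $\Psi'$ by conjugation. The only genuinely new point is that we are now working over the Laurent polynomial ring $R'$ rather than a power series ring, so we must check that this substitution does not disturb the spectral-sequence bookkeeping.

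For the cocycle, I would consider the superfiltered complex $C' \coloneqq \hom_{\mffilt}(\mfE', \mfE_0')$ over $R'$ and the $\Z$-graded filtered complex $(C')^T$ of \cref{defTcomplex}, the $\Z$-grading being inherited from the grading on $E = \Lambda V$, so that $C'$ is concentrated in degrees $-n, \dots, n$. Since $\diff_{\mfE'}$ and $\diff_{\mfE_0'}$ both have leading term $-\vv' \wedge \bullet$, the leading term $\diff_1$ of $\diff_{C'}$ is the differential of the endomorphism complex of the Koszul complex $(R'\otimes E, -\vv' \wedge \bullet)$, the direct analogue of $\Edg$ from \cref{lemiotaqis}. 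Here one uses that $z_1/\rho_1 - 1, \dots, z_n/\rho_n - 1$ form a regular sequence in $R'$ (after the unit rescaling $z_i \mapsto \rho_i^{-1}z_i$ these become $u_i - 1$ in a Laurent polynomial ring, and each $u_i - 1$ is a monic nonzerodivisor modulo its predecessors), so this Koszul complex still resolves $R_0 = R'/\m'$ and hence its endomorphism complex has cohomology $\Ext^*_{R'}(R_0, R_0) \cong E$, just as before. The first page of the spectral sequence for $(C')^T$ is thus $R_0[T^{\pm 1}] \otimes E$, with $T^m \otimes \Lambda^d V$ in the $m$th column and $(m+d)$th row. The identity map is a $\diff_1$-cocycle, because the leading terms of the two squifferentials cancel, and it represents $1 \in E$, sitting in column $0$ and row $0$; its differential on the $r$th page lands in column $r$ and row $1-r$, but that group vanishes even on the first page, since it would require a contribution from $\Lambda^{1-2r}V$. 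Hence $\id_{R'\otimes E}$ is a permanent cocycle, so it lifts to a genuine cocycle $i' \in \hom^0_{\mffilt}(\mfE', \mfE_0')$ of the form $\id_{R'\otimes E} + (\text{lower order})$.

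Given $i'$ one inductively builds a two-sided inverse $(i')^{-1}$, which is again of the form $\id_{R'\otimes E} + (\text{lower order})$ and automatically a cocycle by the Leibniz rule, and then $\Psi' \colon \mathcal{B}' \to \mathcal{B}_0'$, $a \mapsto i' a (i')^{-1}$ (the product taken in the common underlying algebra $\End_{R'}(R'\otimes E)$), is a unital dg-algebra isomorphism, superfiltered of degree $0$, whose leading term is the identity; as in \cref{propmfEqual} we regard it as the first and only nonzero component of a strictly unital $A_\infty$-homomorphism, still denoted $\Psi'$. As with $\mfE_0$, the object $\mfE_0'$ depends on the auxiliary choice of $\ww'$, but the same argument shows different choices yield objects isomorphic via identity-to-leading-order maps, so nothing above depends on it, and, as promised, the whole argument is purely algebraic, using only that $\diff_{\mfE'}$ has the stated leading term.

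I expect the only point really requiring care is the identification of the leading-term complex $\rH^*(\Edg')$ with $E$, i.e.\ the claim that the Koszul complex on $z_1/\rho_1 - 1, \dots, z_n/\rho_n - 1$ resolves $R_0$ over $R'$ (the regular-sequence check above, which holds over an arbitrary ground ring $R_0$) together with the bookkeeping that the $\Z$-grading on $C'$ is still bounded so that the permanent-cocycle argument applies unchanged; everything else transcribes directly from \cref{secmfE}.
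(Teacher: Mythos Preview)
Your proposal is correct and follows exactly the approach the paper indicates (it simply points to \cref{propmfEqual,lemidERCocycle} and writes $\qed$). You have correctly transcribed that argument to the Laurent-polynomial setting, and your explicit verification that $z_1/\rho_1-1,\dots,z_n/\rho_n-1$ is a regular sequence in $R'$ is a genuine detail the paper leaves implicit but which is needed for the first-page computation $\rH^*(E_\mathrm{dg}')\cong E$ to go through; without it the group in position $(r,1-r)$ need not vanish.
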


\begin{rmk}
This conjugation provides the conjectured `quantum change of coordinates' mentioned above \cref{thmLocalMS}.
\end{rmk}

Composing this $\Psi'$ with the $\Phi' : \calA \to \calB$ given by $\LMF{L}_\mathrm{geom}$, we obtain a cohomologically unital superfiltered $A_\infty$-algebra map
\[
\Psi' \circ \Phi' : \calA \to \calB_0'
\]
such that the leading term of $(\Psi' \circ \Phi')^1 (v_i)$ is $v_i \wedge \bullet$.  By the same arguments as for $\calB_0$ in \cref{secSurj}, we have that $\rH^*(\calB_0')$ is isomorphic to $E$, such that the action of $e \in E$ on $\mfE'_0$ has leading term $e \wedge \bullet$.  Therefore $\Psi' \circ \Phi'$ intertwines the identifications of $\gr \rH^*(\calA)$ and $\gr \rH^*(\calB_0')$ with $E$.


\subsection{Completing the proofs}

By mimicking the construction of the minimal model $\calB_0^\mathrm{min}$ for $\calB_0$ from \cref{secSurj}, we construct a minimal model $(\calB_0')^\mathrm{min}$ for $\calB_0'$, along with maps $\iota'$, $\pi'$ and $\eta'$.  In fact, the argument for $\calB_0'$ is slightly simpler than that for $\calB_0$, because $\calB_0'$ is free as an $R_0$-module so we can avoid the decomposition into the $K^{s,q}$ that was needed in \cref{lemetam1}.  We do, however, choose to split $\calB_0'$ into subcomplexes $K^q \coloneqq R' \otimes \Lambda^q V \otimes \Lambda V^\vee$, to give us the control over $\eta'_{-1}$ needed in order to prove the following analogue of \cref{lemetaleading}.

\begin{lem}
\label{lemetapleading}
For $r \in \m'$, the leading term of $\eta'(r \id_{E_{R'}})$ is of the form $\sum_i m'_i(r) v_i^\vee$, where the $m'_i(r)$ are elements of $R'$ satisfying $\sum_i (z_i/\rho_i - 1) m'_i(r) = -\eps r$.
\end{lem}
\begin{proof}
The argument of \cref{lemetaleading} goes through with appropriate primes added to the notation and with the subcomplex $K^0$ in place of the separate pieces $K^{p,0}$.  At the end the homotopy condition gives $-\eps r$ instead of $-r$, because the leading term of $\mfE'_0$ is $-\eps \vv \wedge \bullet$ rather than $-\vv \wedge \bullet$ and hence the leading term of $\mu^1_{\calB_0'}$ is $\eps \mu^1_{E_\mathrm{dg}}$.
\end{proof}

Letting $\Pi' : \calB_0' \to (\calB_0')^\mathrm{min}$ be the projection, we obtain

\begin{thm}
\label{thmLocalMSText}
The map $\Pi' \circ \Psi' \circ \Phi' : \calA \to (\calB_0')^\mathrm{min}$ is a strengthening of $\calA$.\hfill$\qed$
\end{thm}

\begin{cor}
\label{corPoComputation}
The disc potential of $\calA$ is $1$-equivalent to the formal expansion of $W_L - \lambda$ about $\rho$ under the identification $z_i = \rho_i(1+\eps x_i)$.
\end{cor}
\begin{proof}
Setting $z_i = \rho_i(1+\eps x_i)$, the crucial equality satisfied by the $m_i'$ in \cref{lemetapleading} reduces to that satisfied by the $m_i$ in \cref{lemetaleading}.  We can now simply follow through the proof of \cref{propDiscPotential} with appropriate primes added, to obtain the result.
\end{proof}

As a simple application, combining this result with \cref{Theorem1} gives

\begin{cor}
\label{corFormality}
The model $\calA$ for $CF^*((L, \rho), (L, \rho))$ can be strengthened to the formal $A_\infty$-structure on its underlying Clifford algebra (meaning the $A_\infty$-structure with vanishing higher operations) if and only if $W_L - \lambda$ can be made homogeneous of degree $2$ by a formal change of variables about $\rho$ whose first order term is the identity.\hfill$\qed$
\end{cor}

\begin{ex}
In \cite[Proposition 1.3, Section 3.4]{TonkonogCO}, Tonkonog studied the case where $L$ is the equatorial Lagrangian torus (circle) on $S^2$ and $\rho$ is trivial.  He showed that the Floer algebra is non-formal in characteristic $2$ (precisely: the higher operations cannot be eliminated by a $1$-equivalence), using a direct Massey product computation and, independently, a more general algebraic argument.  It was previously known to be formal in all other characteristics.  \Cref{corFormality} gives a new perspective on this result: we have $W_L - \lambda = z+1/z-2$, which becomes
\begin{equation}
\label{eqExpandedPotential}
x^2-x^3+x^4-\dots
\end{equation}
under the substitution $z=1+x$, and so the Floer algebra is formal if and only if \eqref{eqExpandedPotential} can be transformed to $y^2$ by a formal change of variables $y=x+a_2x^2+a_3x^3+\dots$.  This happens if and only if \eqref{eqExpandedPotential} has a square root over $R_0$, which, by binomial expansion, is if and only if $2$ is invertible.
\end{ex}

\bibliography{ClassificationBib}
\bibliographystyle{utcapsor2}

\end{document}